\documentclass{article}

\usepackage[utf8]{inputenc}
\usepackage{amsmath}
\usepackage{amsfonts}
\usepackage{amsthm}
\usepackage{myarticles}
\usepackage{bbm}
\usepackage{cases}
\usepackage{enumitem}
\usepackage{mathtools}
\usepackage{leftidx}
\usepackage[
backend=biber,
style=numeric,
sorting=nyt,
abbreviate=true,
natbib=true,
sortlocale=en_US,
isbn=false,
first inits=true,
url=false, 
doi=true,
eprint=false
]{biblatex}
\addbibresource{sources.bib}
\usepackage[]{todonotes}

\theoremstyle{Mio}
\newtheorem{prop}{Proposition}[section]
\newtheorem{lemma}{Lemma}[section]
\newtheorem{teo}{Theorem}[section]

\newenvironment{Asmpt}[1]
{\innercustomthm}
{\endinnercustomthm}

\theoremstyle{remark}
\newtheorem{rem}{Remark}[section]

\newtheorem{defi}{Definition}[section]
\date{}
\usepackage[hidelinks]{hyperref}

\title{Optimal switching problem for marked point process and systems of reflected BSDE}
\author{Nahuel Foresta\thanks{ 
Dipartimento di Matematica, Politecnico di Milano, via Bonardi 9, 20133 Milano, Italy. nahueltomas.foresta@polimi.it. The
 author was supported
by the Italian MIUR-PRIN 2015 ``Deterministic and stochastic evolution equations'' and
INDAM-GNAMPA.}
}

\begin{document}
    \maketitle
    \begin{abstract}
We formulate an optimal switching problem when the underlying filtration is generated by a marked point process and a Brownian motion. Each mode is characterized by a different compensator for the point process, and thus by a different probability $\prob^i$, which form a dominated family. To each strategy $\mathbf{a}$ of switching times and actions then corresponds a compensator and a probability $\prob^\mathbf{a}$, and the reward is calculated under this probability.

To solve this problem, we define and study a system of reflected BSDE where the obstacle for each equation depends on the solution to the others. The main assumption is that the point process is non explosive and quasi-left continuous. We prove wellposedness of this system through a Picard iteration method, and then use it to represent the optimal value function of the switching problem. We also obtain a comparison theorem for BSDE driven by marked point process and Brownian motion.\\[1cm]

\textbf{Keywords}: reflected backward stochastic differential equations, optimal stopping, optimal switching, marked point processes.
    \end{abstract}

\section{Introduction}

In the last years, the field of optimal switching has received a lot of interest. One of the first descriptions can be found in \cite{brennan1985evaluating}, where a two mode switching model is proposed to model investments in the natural resource industry. Other cases in which optimal switching has been treated are \cite{brekke194optimalswit,carmona08optimalswithpricing,ludkovski2005optimal,tang1993switchimpulseviscosity} among others. In this kind of problems, a controller has at his disposal a certain number of modes in which a system can evolve, say $m$, and he can at any time switch from one mode to another. The rewards (or costs) that the agent obtains are different on each mode, and switching from one mode to another has a cost. Also, the stochastic dynamic of the underlying process may depend on the current mode.

Here we study the case where the underlying stochastic processes are a non-explosive marked point process and a Brownian motion. In particular the (random) compensator of the point process is assumed to be continuous in time, but not absolutely continuous with respect to the Lebesgue measure in time. Marked point processes have been extensively studied in the past (see \cite{jacod1975multivariate}, \cite{bremaud1981point}
or \cite{lastbrandt1995}). In particular \cite{bremaud1981point} discusses cases of impulse control for this kind of processes. Nevertheless, the BSDE approach to control problems for marked point process is rather recent (see \cite{confortola2013backward,confortola2014backward,foresta2017optimal})

Our problem is formulated in a non Markovian way. The peculiarity is how the current mode of evolution affects the dynamic of the point process: to each mode it corresponds a different probability, under which the law of the marked point process is different. This means that for each mode there is a different compensator. The probabilities form a dominated family with respect to some reference probability. Let us briefly explain how this problem is formulated, leaving the details to section \ref{sec:framework}. We start from one reference probability $\prob$ under which the point process $p$ has compensator $\phi_s(\omega,de)dA_s(\omega)$. For each of the $m$ modes we have running gains $f^i_t,g^i_t$ and terminal gain $\xi^i$, but we are also given a bounded predictable random field $\rho^i$ that induces a probability $\prob^i\ll\prob$. As usual, when switching from mode $i$ to mode $j$ there is a cost $C_t(i,j)$. 
The controller chooses a sequence of switching times and switching actions $\mathbf{a}=(\theta_k,\alpha_k)_{k\geq 0}$, which means that at time $\theta_k$ the system is switched to mode $\alpha_k$. This does not only modify the gains, but also the dynamic of the system in the following way. We define the quantities
$$
a_s=\sum_{k\geq 1}\alpha_{k-1}\ind_{(\theta_{k-1},\theta_k]}(s)\qquad \rho_s^{\mathbf{a}}=\sum_{k\geq 1}\ind_{(\theta_{k-1},\theta_{k}]}(s)\rho_s^{\alpha_{k-1}}(e).
$$
Then $\rho^\mathbf{a}$ introduces a new probability (through Girsanov transform) $\prob^\mathbf{a}\ll\prob$, under which the compensator of $p$ is $\rho_s^\mathbf{a}(e)\phi_s(de)dA_s$, and we use it to evaluate the gains. For a given strategy $\mathbf{a}$ starting at time $t$ form mode $i$ the expected gain is
$$
J(t,i,\mathbf{a})=\eval^{\mathbf{a}}\left[\left.\xi^{a_T}+\int_t^Tf_s^{a_s}dA_s+\int_t^Tg_s^{a_s}ds-\sum_{k\geq 1}C_{\theta_k}(\alpha_{k-1},\alpha_k)\right|\mathcal{F}_t\right].
$$
and the value function is the obtained as the essential supremum over all strategies. So we obtain what is in some cases called a ``weak" formulation

Since we are in a non Markovian framework, a natural tool to solve this problem is the use of BSDE. The first paper to use this technique is \cite{hamadene2007startingstopping}, where the authors solve the problem with two possible switching modes and the diffusive dynamic of the underlying process not dependent on the strategy. In order to do this they use the Snell envelope characterization of the solution processes, as well as a doubly reflected BSDE to prove the existence of said solution. This approach is later generalized to the case of any finite number of modes, first in \cite{djehiche2009finiteorizonmultipleswitching} using the Snell envelope characterization, and then both in \cite{hamadene2010switching} and \cite{hu2010multidimensionalBSDE}, where they establish existence of a system of reflected BSDE with interconnected obstacles. In the latter, they solve the switching problem in the case where the drift of the underlying process depends on the mode. The generalization to the case where the mode also affects the volatility, using a system of reflected BSDE, is done in \cite{elie2014controlledswitchingvolatility}. Another interesting result is the one contained in \cite{hamadene2013viscosityswitching}, where the BSDE system is linked to the viscosity solution of a system of PDE with connected obstacles associated to the switching problem.
There are other cases were jump type noises have also been done. One of the first is \cite{hamadene2007starting}, where a Poisson random measure is added to the two modes case. In \cite{hamadene2015systems} the case where the noises are Brownian motion and a Teugels martingale is studied, again with the help of a system of reflected BSDE.

In the present work, we extend in this direction by changing the nature of jumps that appear, but also by introducing this particular form of weak control. The system of reflected BSDE in our case is

\begin{equation}
\label{system_intro}
\begin{cases}
Y_t^i=\xi^i+\int_t^Tf_s^idA_s+\int_t^T g^i_sds+\int_t^T\int_E U_s^i(e)(\rho_s^i(e)-1)dA_s\\
\qquad -\int_t^TU_s^i(e)q(dsde)-\int_t^TZ_s^idW_s+K_T^i-K_t^i\\
Y_t^i\geq \max\limits_{j\in \mathbb{A}_i}(Y^j_t-C_t(i,j))\\
\int_0^T(Y_t^i- \max\limits_{j\in \mathbb{A}_i}(Y^j_t-C_t(i,j)))dK_t^i=0.
\end{cases}
\end{equation}

The solution $Y^i$ of the system represents the value function of the optimal switching problem. The BSDE are driven by both a Wiener process $W$ and the point process compensated random measure $q(dsde)=p(dsde)-\phi_s(de)dA_s$. In particular we make the fundamental assumption that the process $A$ appearing in the compensator $\phi_s(de)dA_s$ is continuous. The solution and the data lie in some kind of weighted $L^2$ spaces, where the weight is a function of the process $A$. See section \ref{sec:framework} for details. Equations of this type have already been studied in  \cite{confortola2013backward,confortola2014backward,confortola16LP,foresta2017optimal,} for the case with $A$ continuous, and in \cite{Bandini2015nonquasi,bandini2017optimal,cohen10generalcomparison,cohen12generalspaces} for the case with non continuous $A$. In particular the paper \cite{foresta2017optimal} studies a BSDE driven by a marked point process and a Brownian motion with a reflection, and thus poses the basis for this work. Nevertheless some results typical of the BSDE theory are missing, so we need to state and prove them. In particular we formulate and prove a comparison theorem for standard BSDE driven by marked point processes and Brownian motion.

The paper is organized as follows. In section \ref{sec:framework} we introduce precisely the formulation of the optimal switching problem and the system of reflected BSDE. We state the assumptions under which this is solved and we also recall briefly the properties of marked point processes. In the following section \ref{swit_existence} we study the existence of a solution to the system of reflected BSDE using a Picard iteration technique. For clarity of exposition, some of the accessory results used here, such as the comparison theorem, can be found in the appendixes. Lastly in section \ref{sec_swit_verification} we establish the link between the solution to system of reflected BSDE and the value function of the switching problem through a verification theorem. This also allows us to obtain uniqueness of the solution to the system.

\section{Framework and objectives}
\label{sec:framework}
In this section we give a brief introduction to the mathematical setting, define precisely  the problem we are trying to solve, state the hypotheses under which we work and introduce the system of reflected BSDE.

Consider a probability space $(\Omega,\mathcal{F},\prob)$, and a Borel\footnote{A borel space is a topological space homeomorphic to a Borel subset
	of a compact metric space (sometimes called Lusin space; we recall that every
	separable complete metric space is Borel)} space $(E,\mathcal{E})$. We are given a $d$-dimensional Wiener process $W$ and a marked point process $p$ with mark values in $E$, independent of $W$. A marked point process is a sequence $(T_n,\xi_n)_{n\geq 1}$ valued in $\mathbb{R}^+\times E$ such that $\prob$-a.s.
\begin{itemize}
\item $T_0=0$.
\item $T_n\leq T_{n+1} \forall n\geq 0$.
\item $T_n<\infty\Rightarrow T_n<T_{n+1} \forall n\geq 0$.
\end{itemize}
We will always assume the marked point process in the paper to be non-explosive, that is $T_n\rightarrow+\infty$ $\mathbb{P}$-a.s.
Another way of representing these processes is through the use of an integer random measure. To each marked point process we associate a random discrete measure $p$ on $((0,+\infty)\times E,\mathcal{B}((0,+\infty))\otimes\mathcal{E})$:
$$
p(\omega,D)=\sum_{n\geq 1}\ind_{(T_n(\omega),\xi_n(\omega))\in D}.
$$
Let $\mathbb{F}=(\mathcal{F}_t)_{t\geq 0}$ be the completed filtration generated by $p$ and $W$, which satisfies the usual conditions. Denote by $\mathcal{P}$
the $\sigma$-algebra of $\mathbb{F}$-predictable processes. To the marked point process is associated a unique predictable random measure $\nu$ on $\Omega\times\mathbb{R}^+\times E$,
called compensator, such that for all non-negative $\mathcal{P}\otimes\mathcal{E}$-measurable
process $C$ it holds that
$$
\evals{\int_0^{+\infty}\int_EC_t(e)p(dtde)}=\evals{\int_0^{+\infty}\int_EC_t(e)\nu(dtde)}.
$$
In particular, this measure can be decomposed as $\nu(\omega,dtde)=\phi_t(\omega,de)dA_t(\omega)$. Moreover the following properties hold:
\begin{itemize}
	\item for every $\omega\in\Omega$, $t\in[0,+\infty)$,  $C\mapsto\phi_t(\omega,C)$ is a probability on $(E,\mathcal{E})$.
	\item for every $C\in\mathcal{E}$, the process $\phi_t(C)$ is predictable.
\end{itemize}
In the following that all marked point processes in this paper have a compensator of this form. Here we have one of the main assumptions
\begin{Asmpt}{(A)}
	\label{ass:continuous_A}
	The process $A$ is continuous.
\end{Asmpt}
It is equivalent (see \cite[Corollary 5.28]{sheng1998semimartingale}) to stating that the counting process $N_t=p((0,t],E)$ is quasi-left continuous, i.e. it has only totally inaccessible jumps.

From now on fix a terminal time $T>0$. Define now the compensated measure $q(dtde)=p(dtde)-\phi_t(de)dA_t$. \begin{defi}
	Let $C$ be a $\mathcal{P}^{\mathcal{G}}\otimes\mathcal{E}$-measurable process such that
	$$
	\int_0^T\int_E |C_t(e)|\phi_t(de)dA_t<\infty.
	$$
	Then we can define the integral
	$$
	\int_0^T\int_E C_t(e)q(dtde)=\int_0^T\int_EC_t(e)p(dtde)-\int_0^T\int_EC_t(e)\phi_t(de)dA_t
	$$
	as difference of ordinary integrals with respect to $p$ and $\phi dA$.
\end{defi}
\begin{rem}
	In the paper we adopt the convention that $\int_a^b$ denotes an integral on $(a,b]$ if $b<\infty$, or on $(a,b)$ if $b=\infty$.
\end{rem}
\begin{rem}
	Since $p$ is a discrete random measure, the integral with respect to $p$ is a sum:
	$$
	\int_0^t\int_E C_s(e)p(dsde)=\sum_{T_n\leq t}C_{T_n}(\xi_n)
	$$
\end{rem}

We have the following result

\begin{prop}
	Let $C$ be a predictable random field that satisfies
	$$
	\eval\left[\int_0^T\int_E |C_t(e)|\phi_t(de)dA_t\right]<\infty.
	$$
	Then the integral
	\begin{equation}
	\label{eq:generic_MPP_martingale}
	\int_0^t\int_E C_s(e)q(dsde)
	\end{equation}
	is a martingale.
\end{prop}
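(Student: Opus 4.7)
The plan is to verify the three defining properties of a martingale for the process $M_t := \int_0^t\int_E C_s(e)\, q(ds\,de)$ on $[0,T]$: integrability, adaptedness, and the conditional expectation identity. Since the integrand is predictable, the raw integrals against $p$ and against $\phi_t(de)\,dA_t$ are both well defined pathwise under the stated $L^1$ bound, so $M$ is a well-defined process, and adaptedness follows because the integral on $[0,t]$ depends only on $\mathcal{F}_t$-measurable data (predictability of $C$ together with the adaptedness of $p$, $\phi$, $A$). Integrability of $M_t$ itself is obtained by estimating $\mathbb{E}|M_t|$ by $2\,\mathbb{E}[\int_0^T\!\int_E|C_s(e)|\phi_s(de)\,dA_s]$, using the defining property of the compensator applied to the non-negative predictable process $|C|$.

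For the martingale identity, I would fix $0\le s<t\le T$ and $B\in\mathcal{F}_s$, and introduce the auxiliary field
\[
\widetilde{C}_r(e):=\mathbf{1}_B\,\mathbf{1}_{(s,t]}(r)\,C_r(e).
\]
Because $\mathbf{1}_B\mathbf{1}_{(s,t]}(r)$ is left-continuous and adapted and $C$ is predictable, $\widetilde{C}$ is $\mathcal{P}\otimes\mathcal{E}$-measurable and satisfies the same integrability bound as $C$. Splitting $\widetilde{C}=\widetilde{C}^+-\widetilde{C}^-$ and applying the compensator identity stated in the excerpt,
\[
\mathbb{E}\!\left[\int_0^{+\infty}\!\!\int_E \widetilde{C}^{\pm}_r(e)\,p(dr\,de)\right]=\mathbb{E}\!\left[\int_0^{+\infty}\!\!\int_E \widetilde{C}^{\pm}_r(e)\,\phi_r(de)\,dA_r\right],
\]
to each sign separately (both sides are finite by the integrability assumption), and subtracting gives $\mathbb{E}[\mathbf{1}_B(M_t-M_s)]=0$. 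Since $B\in\mathcal{F}_s$ was arbitrary, $\mathbb{E}[M_t\mid\mathcal{F}_s]=M_s$.

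The only subtle step is justifying the signed integration: the compensator identity as stated requires a non-negative integrand, so one cannot apply it directly to $\widetilde{C}$, and one must argue that $\widetilde{C}^{\pm}$ are themselves $\mathcal{P}\otimes\mathcal{E}$-measurable, each with finite expectation against $\phi\,dA$ (hence against $p$ as well), before recombining. This decomposition is standard and safe because $|\widetilde{C}|\le|C|$, but it is the one place where a formal monotone-class or truncation argument would be needed if one preferred to avoid the sign-splitting; in that alternative route I would first prove the identity for bounded predictable $C$ supported on $(s,t]\times E$ and then extend by dominated convergence using the hypothesis as a dominating integrable envelope.
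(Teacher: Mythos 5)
Your argument is correct. The paper actually states this proposition without proof (it is a standard fact from the theory of marked point processes, cf.\ Br\'emaud), and your proof is the standard one: the integrability bound follows from applying the compensator identity to the non-negative field $|C|$, and the conditional-expectation identity follows from applying it to the localized field $\mathbf{1}_B\mathbf{1}_{(s,t]}(r)C_r(e)$, which is predictable because $\mathbf{1}_B\mathbf{1}_{(s,t]}$ is left-continuous and adapted. You correctly identify the one point requiring care --- the compensator identity is stated only for non-negative integrands, so the positive/negative-part decomposition (or a truncation argument) is genuinely needed before subtracting --- and your handling of it is sound since $\widetilde{C}^{\pm}\le|C|$ keeps both sides finite.
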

In the following $\mathcal{E}(H)$ denotes the Doleans-Dade exponential of the process $\int_0^\cdot\int_E (H_s(e)-1)q(dsde)$, that is
$$
\mathcal{E}(H)_t=\prod_{0<T_n\leq T}\rho_{T_n}(\xi_n)e^{\int_0^t\int_E (1-\rho_s(e))\phi_s(de)dA_s}.
$$
For a comprehensive treatment of these type of processes,
we refer the reader to \cite{jacod1975multivariate}, \cite{bremaud1981point}
or \cite{lastbrandt1995}.

For some parameter $\beta>0$, we introduce the following spaces:
\begin{itemize}
	\item $\mathcal{L}^{r,\beta}(A)$  is the space of all
	$\mathbb{F}$-progressive processes $X$ such that
	$$||X||_{\mathcal{L}^{r,\beta}(A)}^r=\eval\left[\int_0^Te^{\beta A_s}|X_s|^rdA_s\right]<\infty.$$
	\item $\mathcal{L}^{r,\beta}(p)$ 
	is the space of all $\mathbb{F}$-predictable 
	processes $U$ such that $$||U||_{\mathcal{L}^{r,\beta}(p)}^r
	=\eval\left[\int_0^T\int_E e^{\beta A_s}|U_s(e)|^r\phi_s(de)dA_s\right]<\infty.$$
	\item $\mathcal{L}^{r,\beta}(W,\mathbb{R}^d)$ 
	is the space of $\mathbb{F}$-progressive 
	processes $Z$ in $\mathbb{R}^d$ such that $$||Z||_{\mathcal{L}^{r,\beta}(W)}^r=\eval\left[\int_0^Te^{\beta A_s}|Z_s|^rds\right]<\infty$$
	\item ${I}^2$ ({resp.} ${I}^2(\mathbb{G})$)
	is the space of all càdlàg increasing $\mathbb{F}$-predictable  processes $K$ such that $\eval[K^2_T]<\infty$.
\end{itemize}
We indicate by ${L}^{r,\beta}(A)$, ${L}^{r,\beta}(p)$, ${L}^{r,\beta}(W)$ and $\mathcal{I}^2$ the respective space of equivalence classes. Notice that $L^{2,\beta}(p)\subset L^{1,0}(p)$ thanks to H\"older inequality.

Let us now describe the switching problem. Let $\mathcal{J}=\lbrace1,\dots,m\rbrace$. For each mode $i\in \mathcal{J}$, a terminal reward $\xi^i$ is given, as well as running gains $f_s^i$ and $g_s^i$. Moreover, for each $i\in\mathcal{J}$, we consider a non-negative process $\rho^i_s(e)$ that is $\mathcal{P}\otimes \mathcal{E}$ measurable. The cost of switching from mode $i$ to mode $j$ at time $t$ is given by $C_t(i,j)$, a non negative process.

As always, the controller can switch between modes by choosing switching times and actions. In particular, we define for each mode $i$ the sets $\mathbb{A}_i$ of possible switching destinations as $\mathbb{A}_i=\mathcal{J}\setminus {i}$. A strategy is a sequence of couples $(\theta_n,\alpha_n)$ where $\theta_n$ is a stopping time and $\alpha_n$ is a $\mathcal{J}$-valued $\mathcal{F}_{\theta_n}$-adapted random variable. The law of the point process depends on the switching status, as the switching mode sets a different compensator for the point process through the functions $\rho^i$. We assume the following on the $\rho^i$ and $A$

\begin{Asmpt}{(S)}
	\label{ass:rhosbound}
	The $\rho^i$ satisfy $0\leq \rho^i_t(e) \leq M$ and there exists $\eta>3+M^4$ such that $\evals{e^{\eta A_T}}<\infty$.
\end{Asmpt}
Define $M'=\max(|M-1|,1)$. In the following we will often use an absolutely continuous change of probability ``à la Girsanov", where the ``Girsanov kernel" will be one of the $\rho^i$ or a combination of them, as described in the following. 
Let us first define an admissible switching strategy and the corresponding switched kernel.

\begin{defi}
	A strategy is called admissible if $P(\theta_n<T \;\forall n)=0$. The set of admissible strategies such that $(\theta_0,\alpha_0)=(t_i)$ is denoted as $\mathcal{A}_t^i$.
\end{defi}

For a strategy $(\theta_0,\alpha_0)$ we define the process $a$ indicating the current mode as

\begin{equation}
\label{eq:strategy_defined}
a_s=\sum_{k\geq 1}\alpha_{k-1}\ind_{(\theta_{k-1},\theta_k]}(s)
\end{equation}

When the controller changes mode, the law of the process changes in the following way. For a strategy $\mathbf{a}\in\mathcal{A}_t^i$, consider the following $\mathcal{P}\otimes\mathcal{E}$-measurable process process
\begin{equation}
\rho_s^{\mathbf{a}}=\sum_{k\geq 1}\ind_{(\theta_{k-1},\theta_{k}]}(s)\rho_s^{\alpha_{k-1}}(e)
\end{equation}

Now introduce the supermartingale $L_t^\mathbf{a}=\mathcal{E}(\rho^\mathbf{a})$:
\begin{equation}
\label{eq:Zdefined}
L_t^\mathbf{a}=\prod_{k\geq 1}^{}\rho^\mathbf{a}_{T_n}(\xi_n)e^{\int_0^t\int_E(1-\rho^{\mathbf{a}}_s(e))\phi_s(de)dA_s}
\end{equation}
When $L^\mathbf{a}$ is a  martingale, we define the absolutely continuous probability $\mathbb{P}^\mathbf{a}\ll\mathbb{P}$ as
$$
\frac{\mathbb{P}^\mathbf{a}}{\mathbb{P}}=L_T^\mathbf{a}.
$$
In this case, under $\mathbb{P}^\mathbf{a}$ the compensator of $p$ becomes
\begin{equation}
\nu^{\mathbf{a}}(dsde)=\rho_s^{\mathbf{a}}\phi_s(de)dA_s.
\end{equation}
Thanks to assumptions above, we have that $L_t^\mathbf{a}$ is a martingale for any choice of $\mathbf{a}\in\mathcal{A}_t^i$:
\begin{prop}
	\label{prop:still_mg_and_square}
	Let $\mathbf{a}\in\mathcal{A}_t^i$ be fixed. Under assumption \ref{ass:rhosbound}, $L^\mathbf{a}$ is a $\mathbb{P}$-martingale with $\sup_{t\in[0,T]}\eval\left[(L^\mathbf{a}_t)^2\right]<+\infty$. Moreover it holds that
	\begin{equation}
	\label{eq:square_martingale_conditioned}
	\frac{\eval\left[(L^\mathbf{a}_t)^2\right.\left|\vphantom{(L^a)^2}\mathcal{F}_s\right]}{(L^\mathbf{a}_s)^2}< \econd{e^{\eta A_T}}\quad\mathbb{P}\text{-a.s.}
	\end{equation}
	If $H$ is a process in $L^{1,0}(p)\cap L^{2,0}(p)$, then $\int_0^t\int_EH_s(e)q^\mathbf{a}(dsde)$ is a $\mathbb{P}^\mathbf{a}$-martingale, where $q^\mathbf{a}(dsde)=p(dsde)-\rho^\mathbf{a}_s(e)\phi_s(de)dA_s$. Also if $M$ is a $\prob$-martingale of the form
	$$
	M_t=M_0+\int_0^tZ_sdW_s
	$$
	for some process $Z$ in $L^{2,0}(W)$, then $M$ is also a $\prob^\mathbf{a}$-martingale.
\end{prop}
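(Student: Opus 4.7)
The plan relies on the multiplicative representation of the Doléans--Dade exponential. A direct expansion from (\ref{eq:Zdefined}) gives the key algebraic identity
\[
(L^\mathbf{a}_t)^2 = \mathcal{E}\bigl((\rho^\mathbf{a})^2\bigr)_t \cdot \exp\!\Bigl(\int_0^t\!\!\int_E (\rho^\mathbf{a}_s(e)-1)^2\,\phi_s(de)\,dA_s\Bigr),
\]
together with its analogue for the restarted ratio $L^\mathbf{a}_t/L^\mathbf{a}_s$ with integrals on $(s,t]$. The first factor is a non-negative local martingale (hence a supermartingale), and the second is pointwise dominated by $e^{(M')^2(A_t-A_s)}$ by Assumption~\ref{ass:rhosbound}. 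Localizing with $\tau_n=\inf\{t:A_t>n\}\wedge T$, which increases to $T$ a.s.\ because $\mathbb{E}[e^{\eta A_T}]<\infty$ forces $A_T<\infty$, $\mathcal{E}((\rho^\mathbf{a})^2)$ is a genuine martingale up to each $\tau_n$.

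Taking conditional expectation in the identity and applying Cauchy--Schwarz splits the right-hand side into $\mathbb{E}[\mathcal{E}((\rho^\mathbf{a})^2)^2\mid\mathcal{F}_s]^{1/2}\cdot\mathbb{E}[e^{2(M')^2(A_T-A_s)}\mid\mathcal{F}_s]^{1/2}$; iterating the identity one more time gives $\mathcal{E}((\rho^\mathbf{a})^2)^2=\mathcal{E}((\rho^\mathbf{a})^4)\cdot\exp(\int((\rho^\mathbf{a})^2-1)^2\phi\,dA)$ whose new exponent is bounded by $(M^4+1)(A_T-A_s)$, while $\mathcal{E}((\rho^\mathbf{a})^4)$ is itself a non-negative supermartingale with $\mathbb{E}[\mathcal{E}((\rho^\mathbf{a})^4)_{s,T}\mid\mathcal{F}_s]\le 1$. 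Combining these yields
\[
\frac{\mathbb{E}[(L^\mathbf{a}_t)^2\mid\mathcal{F}_s]}{(L^\mathbf{a}_s)^2} \;<\; \mathbb{E}\bigl[e^{\eta A_T}\bigm|\mathcal{F}_s\bigr],
\]
the numerical threshold $\eta>3+M^4$ of Assumption~\ref{ass:rhosbound} being exactly what is needed to absorb both exponents $2(M')^2$ and $M^4+1$ created by the single Cauchy--Schwarz step. Setting $s=0$ gives the uniform second-moment bound $\sup_{t\in[0,T]}\mathbb{E}[(L^\mathbf{a}_t)^2]\le\mathbb{E}[e^{\eta A_T}]<\infty$. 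Since $L^\mathbf{a}$ is then a non-negative $L^2$-bounded local martingale, it is uniformly integrable and hence a genuine $\mathbb{P}$-martingale on $[0,T]$, and Fatou transfers the bounds from $\tau_n$ to $T$.

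The two Girsanov-type statements then follow from this $L^2$-bound. Using $q^\mathbf{a}(dsde)=q(dsde)-(\rho^\mathbf{a}_s(e)-1)\phi_s(de)\,dA_s$, integration by parts on $L^\mathbf{a}_t\int_0^t\!\int_E H_s(e)\,q^\mathbf{a}(dsde)$ combined with the jump description of $L^\mathbf{a}$ cancels the compensator correction and makes the product a $\mathbb{P}$-local martingale; the $L^2(p)$-integrability of $H$ together with the $L^2$-bound on $L^\mathbf{a}$ upgrades this (by Cauchy--Schwarz) to a true $\mathbb{P}$-martingale, equivalently $\int_0^\cdot\!\int_E H_s(e)\,q^\mathbf{a}(dsde)$ is a $\mathbb{P}^\mathbf{a}$-martingale. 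For the Brownian part, the $\mathbb{P}$-independence of $p$ and $W$ together with the fact that $L^\mathbf{a}$ depends only on $p$ yields $[L^\mathbf{a},W]\equiv 0$ and hence $[L^\mathbf{a},\int Z\,dW]\equiv 0$, so $L^\mathbf{a}_t\cdot M_t$ is a $\mathbb{P}$-local martingale; the $L^{2,0}(W)$ hypothesis on $Z$ and the $L^2$-bound on $L^\mathbf{a}$ again give true $\mathbb{P}$-martingality, i.e.\ $M$ is a $\mathbb{P}^\mathbf{a}$-martingale. The main technical obstacle is the $L^2$-bound in step two: iterating the Cauchy--Schwarz argument any further would produce exponents on $A_T$ of the form $M^{2^k}$ that no fixed $\eta$ can absorb, which is precisely why Assumption~\ref{ass:rhosbound} is calibrated so that a single iteration suffices.
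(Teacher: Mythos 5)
Your algebraic identities are correct, and the overall architecture (a conditional second--moment bound, localization, then the two Girsanov statements via product local martingales and Bayes' rule --- the paper instead uses BDG under $\prob^{\mathbf{a}}$, but both routes work for that last part) is the same as the paper's. The genuine gap is in the derivation of \eqref{eq:square_martingale_conditioned}, at the words ``Combining these yields''. Writing $X_{s,t}=X_t/X_s$, after your Cauchy--Schwarz the first factor is
\begin{equation*}
\mathbb{E}\Bigl[\bigl(\mathcal{E}((\rho^{\mathbf{a}})^2)_{s,t}\bigr)^2\Bigm|\mathcal{F}_s\Bigr]^{1/2}
=\mathbb{E}\Bigl[\mathcal{E}((\rho^{\mathbf{a}})^4)_{s,t}\,
e^{\int_s^t\int_E((\rho^{\mathbf{a}}_r(e))^2-1)^2\phi_r(de)dA_r}\Bigm|\mathcal{F}_s\Bigr]^{1/2},
\end{equation*}
i.e.\ a \emph{single} conditional expectation of a supermartingale ratio times an unbounded random exponential. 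The bound $\mathbb{E}[\mathcal{E}((\rho^{\mathbf{a}})^4)_{s,t}\mid\mathcal{F}_s]\le 1$ cannot be applied to one factor of a product inside one conditional expectation; separating the two factors requires a further Cauchy--Schwarz, which produces $\mathcal{E}((\rho^{\mathbf{a}})^8)$ and doubled exponents --- exactly the blow-up you yourself warn against in your closing sentence. (Re-expanding $\mathcal{E}((\rho^{\mathbf{a}})^4)_{s,t}\,e^{\int((\rho^{\mathbf{a}})^2-1)^2\phi\,dA}$ merely returns $\mathcal{E}((\rho^{\mathbf{a}})^2)_{s,t}^2$, so no progress has been made.) Moreover, even granting an optimal merge of your two factors, the accumulated exponent $2(M')^2+(M^4+1)$ exceeds $3+M^4$ as soon as $M>2$, so Assumption \ref{ass:rhosbound}, which only guarantees $\evals{e^{\eta A_T}}<\infty$ for some $\eta>3+M^4$, would not cover it.

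The paper's arrangement avoids both problems: it pivots on $(\rho^{\mathbf{a}})^4$ directly rather than on $(\rho^{\mathbf{a}})^2$, writing
\begin{equation*}
\frac{(L^{\mathbf{a}}_t)^2}{(L^{\mathbf{a}}_s)^2}
=\bigl(\mathcal{E}((\rho^{\mathbf{a}})^4)_{s,t}\bigr)^{1/2}
\exp\Bigl\{2\int_s^t\int_E(1-\rho^{\mathbf{a}}_r(e))\phi_r(de)dA_r
-\frac12\int_s^t\int_E\bigl(1-(\rho^{\mathbf{a}}_r(e))^4\bigr)\phi_r(de)dA_r\Bigr\},
\end{equation*}
where the jump products cancel exactly, so that a single Cauchy--Schwarz puts the supermartingale ratio \emph{alone} in one factor (bounded by $1$) and all exponential corrections in the other, with total exponent $\int_s^t\int_E(3-4\rho^{\mathbf{a}}+(\rho^{\mathbf{a}})^4)\phi\,dA\le(3+M^4)(A_t-A_s)<\eta A_T$. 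If you redo your estimate with this factorization, the remainder of your argument (localization, $L^2$-boundedness, the product-martingale treatment of $\int H\,dq^{\mathbf{a}}$ and of $\int Z\,dW$) goes through.
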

\begin{proof}
	From the definition of $\rho^\mathbf{a}$ we have that $\rho^\mathbf{a}$ verifies assumption \ref{ass:rhosbound} too, that is $0 \leq \rho^\mathbf{a}_t(e)\leq M$. 
	The proof of the first part can be found in \cite[Lemma 4.2]{confortola2013backward}. The authors prove that if for some $\gamma>1$ and
	$$
	\beta=\gamma+1+\frac{M^{\gamma^2}}{\gamma-1}
	$$
	it holds that $\evals{e^{\beta A_T}}<\infty$, then $\sup_{t\in[0,T]}\evals{(L_t^\mathbf{a})^\gamma}<\infty$ and $\evals{L^\mathbf{a}_T}=1$. We don't repeat the proof here, but mention that the proof still holds if we choose $\beta$ to be any number strictly greater than $\gamma+1+\frac{M^{\gamma^2}}{\gamma-1}$. In particular the statement of our theorem refers to the case $\gamma=2$.
	As for the proof of the inequality \eqref{eq:square_martingale_conditioned} we have (which is a generalization of what done for the first part of the lemma).
	The quantity $$\frac{\eval\left[(L^\mathbf{a}_t)^2\right.\left|\vphantom{(L^a)^2}\mathcal{F}_s\right]}{(L^\mathbf{a}_s)^2}$$
	can be rewritten as
	\begin{multline*}
	\eval\left[
	\frac{\prod\limits_{0<T_n\leq t}\rho^\mathbf{a}_{T_n}(\xi_n)}{\prod\limits_{0<T_n\leq s}\rho^\mathbf{a}_{T_n}(\xi_n)}\exp\left\lbrace\frac{1}{2}\int_s^t\int_E\left(1-(\rho^\mathbf{a}_s(e))^4\right)\phi_s(de)dA_s\right\rbrace\cdot\right.\\
	\exp\left\lbrace 2\int_s^t\int_E\left(1-\rho^\mathbf{a}_s(e)\right)\phi_s(de)dA_s-\frac{1}{2}\int_s^t\int_E\left(1-(\rho^\mathbf{a}_s(e))^4\right)\phi_s(de)dA_s\right\rbrace
	\left|\vphantom{\frac{\prod\limits_{0<T_n\leq T}\rho^\mathbf{a}_{T_n}(\xi_n)}{\prod\limits_{0<T_n\leq T}\rho^\mathbf{a}_{T_n}(\xi_n)}}\mathcal{F}_s\right].
	\end{multline*}
	Using Cauchy-Scwarts inequality for conditional expectations, this becomes
	\begin{multline*}
	\eval\left[\exp\left\lbrace 4\int_s^t\int_E(1-\rho^\mathbf{a}_s(e))\phi_s(de)dA_s-\int_s^t\int_E\left(1-(\rho^\mathbf{a}_s(e))^4\right)\phi_s(de)dA_s\right\rbrace\right.\left|\vphantom{\left\lbrace 4\int_s^t\int_E(1-\rho^\mathbf{a}_s(e))\phi_s(de)dA_s\right\rbrace}\mathcal{F}_s\right]^{1/2}\\
	\eval\left[\frac{\mathcal{E}((\rho^\mathbf{a})^4)_t}{\mathcal{E}((\rho^\mathbf{a})^4)_s}\right.\left|\vphantom{\frac{\mathcal{E}((\rho^\mathbf{a})^4)_t}{\mathcal{E}({\rho^\cdot\mathbf{a}}^4)_s}}\mathcal{F}_s\right]^{1/2}.
	\end{multline*}
	The last term is less or equal than $1$, thanks to the supermartingale property of $\mathcal{E}({\rho}^4)$, and the other term is easily estimated:
	\begin{multline*}
	\econd[\mathcal{F}_s]{\exp\left\lbrace \int_s^t\int_E(3-4\rho_s(e))\phi_s(de)dA_s+\int_s^t\int_E(\rho_s(e))^4\phi_s(de)dA_s\right\rbrace}\\
	\leq \econd[\mathcal{F}_s]{\exp\left\lbrace (3+L^4)(A_t-A_s)\right\rbrace}< \econd[\mathcal{F}_s]{e^{ \eta A_T}}.
	\end{multline*}
	We now turn to the last claim. 
	Clearly $\int_0^tZ_sdW_s$ is a local $\prob^\mathbf{a}$-martingale.
	By using Burkholder-Davis-Gundy inequality (under $\prob^\mathbf{a}$) we have that
	\begin{equation*}
	\eval^\mathbf{a}\left[\sup_{t\in[0,T]}\left|\int_0^tZ_sdW_s\right|\right]
	\leq K \evals{(L^\mathbf{a}_T)^2}^{\frac{1}{2}} \evals{\int_0^T|Z_s|^2ds}^\frac{1}{2}
	<\infty.
	\end{equation*}
	Thus the integral is a $\prob^\mathbf{a}$-martingale.
As for the point process part we do the same obtaining
\begin{align*}
\eval^\mathbf{a}\left[\sup_{t\in[0,T]}\left|\int_0^t\int_E U_s(e)q^\mathbf{a}(dsde)\right|\right]
&\leq K \eval^\mathbf{a}\left[\left(\int_0^T\int_E |U_s|^2(e)p(dsde)\right)^{1/2}\right]\\
&= K \evals{L^\mathbf{a}_T\left(\int_0^T\int_E|U_s(e)|^2p(dsde\right)^{1/2}}\\
&\leq K  \evals{(L^\mathbf{a}_T)^2}^{\frac{1}{2}} \evals{\int_0^T\int_E|U_s(e)|^2\phi_s(de)dA_s}^\frac{1}{2}\\
&<\infty.
\end{align*}
\end{proof}

We can finally formulate the optimal switching problem. For a given strategy $\mathbf{a}\in\mathcal{A}_t^i$, introduce the target quantity to maximise
\begin{equation}
J(t,i,\mathbf{a})=\eval^{\mathbf{a}}\left[\left.\xi^{a_T}+\int_t^Tf_s^{a_s}dA_s+\int_t^Tg_s^{a_s}ds-\sum_{k\geq 1}C_{\theta_k}(\alpha_{k-1},\alpha_k)\right|\mathcal{F}_t\right].
\end{equation}
We then have the value function 
\begin{equation}
v(t,i)=\essup_{\mathbf{a}\in\mathcal{A}_t^i}J(t,i,\mathbf{a}).
\end{equation}
The aim is to find this optimal value and characterize the optimal strategy.
\begin{rem}
	We have chosen to let the kernels $\rho^i$ to touch the value zero, which is useful if we want to have one of the modes to completely cut off the dynamic for a limited time. This comes with added technical difficulties, since the probabilities $\prob^i$ and $\prob^\mathbf{a}$ are not equivalent to the reference probability $\prob$. In particular relations that can be easily shown to hold $\prob^i$-a.s. or $\prob^\mathbf{a}$-a.s. need not to hold $\prob$-a.s. In order to obtain some results we will make use of approximated kernels that induce an equivalent probability.
\end{rem}

In order to tackle the problem, we will represent the value function of each mode through the use of a system of reflected BSDE. We introduce the following system of Backward equations:

\begin{equation}
\label{eq:system}
\begin{cases}
Y_t^i=\xi^i+\int_t^Tf_s^idA_s+\int_t^T g^i_sds+\int_t^T\int_E U_s^i(e)(\rho_s^i(e)-1)dA_s\\
\qquad -\int_t^TU_s^i(e)q(dsde)-\int_t^TZ_s^idW_s+K_T^i-K_t^i\\
Y_t^i\geq \max\limits_{j\in \mathbb{A}_i}(Y^j_t-C_t(i,j))\\
\int_0^T(Y_t^i- \max\limits_{j\in \mathbb{A}_i}(Y^j_t-C_t(i,j)))dK_t^i=0.
\end{cases}
\end{equation}

In \eqref{eq:system} we see that the generator of the $i$-th equation only depends on $U^i$, and it does so in a very specific way. This will allow us to consider the system not only under the reference probability $\prob$, but also under the probability $\prob^i$ induced by the kernel $\rho^i$. In this second case the whole term 
$$
+\int_t^T\int_EU_s^i(e)(\rho_s^i(e)-1)dA_s-\int_t^TU_s^i(e)q(dsde)
$$
will be a $\prob^i$ martingale. This means we can incorporate in the system of BSDE the fact that in the switching problem the compensator in mode $i$ changes to $\rho_s^i(e)\phi_s(de)dA_s$.
A solution to the BSDE is $(Y^i,U^i,Z^i,K^i)_{i\in \mathcal{J}}$ where, for all $i\in\mathcal{J}$ and for all $(M')^2<\beta<\hat{\beta}$,
\begin{itemize}
	\item $Y_i$ is an adapted càdlàg process in $L^{2,\beta}(A)\cap L^{2,\beta}(W)$.
	\item $U_i$ is in $L^{2,\beta}(p)$.
	\item $Z_i$ is in $L^{2,\beta}(W)$.
	\item $K^i_t$ is an increasing continuous process in $\mathcal{I}^2$.
\end{itemize}

First we start with some assumptions, that will serve both to assure that the switching problem is well posed and that there exists a solution to the system of BSDE:
\begin{Asmpt}{(D)}
	\label{ass:data_swit}
	For $i\in\mathcal{J}$ and for some $\hat{\beta}>(M')^2$
	\begin{enumerate}[label=\roman*)]
		\item \label{swit_assdata_final} $\xi^i$ is a given $\mathcal{F}_T$ measurable variable such that $\eval^i[e^{\hat{\beta}A_T}|\xi^i|^2]<\infty$.
		\item \label{swit_assdata_f} $f_s^i$ is a progressive process in $L^{2,\hat{\beta}}(A)$.
		\item \label{swit_assdata_g} $g_s^i$ is a progressive process in $L^{2,\hat{B}}(W)$.
		\item \label{swit_assdata_costs} $C_t(i,j)$ for $i,j\in\mathcal{J}$ are continuous adapted processes. $C_t(i,j)\geq 0$ for $i\neq j$ and $C_t(i,i)=0$ and \begin{equation}
		\label{ass:no_free_cycles}\inf_t(C_t(i,j)+C_t(j,l)-C_t(i,l))>0\text{ for all }i\neq j\neq l.\end{equation}
		Moreover, for all $i,j\in\mathcal{J}$, $\eval\left[\sup_te^{\hat{\beta}A_t}C_t(i,j)\right]<\infty$.
		\item \label{swit_assdata_final_greater} It holds that for all $i,j\in\mathcal{J}$ $\xi^i\geq\xi^j-C_T(i,j)$ a.s.
	\end{enumerate}
\end{Asmpt}
Assumptions \ref{swit_assdata_final}, \ref{swit_assdata_f}, \ref{swit_assdata_g} and \ref{swit_assdata_final_greater}  are the usual ones needed for the reflected BSDE, while  \ref{swit_assdata_costs} is typical of switching problems. In particular \eqref{ass:no_free_cycles} means that it is not possible to switch from $i$ to $j$ by switching to $k$ in the middle for a lower cost.
\begin{rem}
	The condition \eqref{ass:no_free_cycles} is a bit stronger than the other usual condition taken in switching problems, that is the no free loop property. This property states that for any cycle of indexes of any length $p$ $j_1,j_2,\dots,j_p=j_1$ it must hold that
	$$
	\sum_{k=1}^{p-1}C_t(j_k,j_{k+1})>0.
	$$
	Details of how \eqref{ass:no_free_cycles} implies the no free loop property can be found in the proof of proposition \ref{swit:Kconti}.
\end{rem}

\begin{rem}
	\label{swit_another_no_free_loop}
	We have formulated the problem in the case of a marked point process and a Brownian motion, but it is of course possible to consider the case where only a marked point process is involved. In that case it is enough to put $g\equiv 0$ and consider all data adapted to the filtration generated by the point process. The solution would then be just $(Y^i,U^i,K^i)_{i\in \mathcal{J}}$. In some sense, we could say that the Wiener process does not modify the nature of the current problem. This is because the change of probability for the point process does not in any way modify the behaviour of the diffusive part.
\end{rem}

\section{Existence of a solution to the system of RBSDE}
\label{swit_existence}
In this section we obtain existence of a solution to the system \eqref{eq:system}. To this end we construct the solution iteratively, where at step $n$, the barrier is represented by the solution at step $n-1$. This technique was used for example in \cite{hamadene2010switching}, \cite{hamadene2015systems} and \cite{chassagneux2011note}. One of the most important tools here is the comparison theorem, as we have to show that our iterative scheme converges increasingly to some point and that this point is the solution we look for. The comparison theorem cannot be applied in all cases (for example when we compare a reflected BSDE with a one without minimal push condition), thus in some parts we resort to direct comparisons. What we will obtain is

\begin{teo}
	Under assumptions \ref{ass:rhosbound} and \ref{ass:data_swit}, there exists a solution to the system \eqref{eq:system} such that for all $i$ $(Y^i,U^i,Z^i,K^i)\in L^{2,\beta}(A)\cap L^{2,\beta}(W)\times L^{2,\beta}(p) \times L^{2,\beta}(W) \times \mathcal{I}^2$ for $(M')^2<\beta<\hat{\beta}$.
\end{teo}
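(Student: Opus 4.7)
The plan is to construct the solution of \eqref{eq:system} as the increasing Picard limit of a sequence of reflected BSDE with exogenous obstacles. I would initialize $(Y^{i,0}, U^{i,0}, Z^{i,0})$, for each $i$, as the solution of the BSDE in \eqref{eq:system} without the reflection term (so $K^i\equiv 0$ and no obstacle); existence and membership in the weighted $L^{2,\beta}$ spaces follow from the standard BSDE theory for marked point process and Brownian motion noise (cf.\ \cite{confortola2013backward, foresta2017optimal}). For $n\geq 0$ set $L_t^{i,n} := \max_{j\in\mathbb{A}_i}(Y_t^{j,n}-C_t(i,j))$ and let $(Y^{i,n+1}, U^{i,n+1}, Z^{i,n+1}, K^{i,n+1})$ solve the singly-reflected BSDE with the same generator as in \eqref{eq:system} and obstacle $L^{i,n}$, invoking the existence result of \cite{foresta2017optimal}; the required integrability of the obstacle is checked inductively from assumption \ref{ass:data_swit}.

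I would then prove by induction on $n$ that $Y^{i,n}\leq Y^{i,n+1}$ for all $i$. The base case holds because $Y^{i,1}$ incorporates the nonnegative pushing term $K^{i,1}$; for the inductive step, $Y^{j,n-1}\leq Y^{j,n}$ for all $j$ gives $L^{i,n-1}\leq L^{i,n}$, and the comparison theorem for singly-reflected BSDE (itself a consequence of the comparison theorem for BSDE driven by marked point processes and Brownian motion, stated and proved in the appendix) yields $Y^{i,n}\leq Y^{i,n+1}$. To control the limit I plan to exhibit a uniform upper bound $\bar{Y}^i$ independent of $n$, obtained by solving a non-reflected BSDE whose data dominates all modes simultaneously (for instance with terminal value $\sum_j|\xi^j|$, drivers $\sum_j|f^j|$, $\sum_j|g^j|$, and a kernel majorizing every $\rho^j$); the comparison theorem then yields $Y^{i,n}\leq \bar{Y}^i$ uniformly in $n$, and standard a priori estimates place $\bar{Y}^i$ in $L^{2,\beta}(A)\cap L^{2,\beta}(W)$.

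Together with monotonicity this gives $Y^{i,n}\uparrow Y^i$ pointwise and, by dominated convergence, in $L^{2,\beta}(A)\cap L^{2,\beta}(W)$. To recover the other components I would apply Itô's formula to $e^{\beta A_t}(Y^{i,n+p}-Y^{i,n})^2$ and derive a Cauchy estimate for $U^{i,n},Z^{i,n}$ in their weighted norms and for $K^{i,n}$ in $\mathcal{I}^2$. The delicate cross-term $\int_t^T (Y^{i,n+p}-Y^{i,n})(dK^{i,n+p}-dK^{i,n})$ is handled by invoking the Skorohod condition at each level together with the inequality $Y^{i,n+p}\geq Y^{i,n}\geq L^{i,n-1}$; this allows one to bound the cross-term by $\int_t^T(L^{i,n+p-1}-L^{i,n-1})\,dK^{i,n+p}$, which vanishes as $n\to\infty$ by monotone convergence, provided $\mathbb{E}[(K^{i,n}_T)^2]$ is uniformly bounded in $n$ (which comes from the same Itô identity together with the upper bound $\bar{Y}^i$). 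Once convergence is established, passage to the limit in the BSDE is term by term, and the Skorohod condition for the limit follows from the monotone convergence of $L^{i,n}$ to $\max_{j\in\mathbb{A}_i}(Y^j-C(i,j))$ together with weak convergence of the $dK^{i,n}$.

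The hardest step is the Cauchy estimate for the iterates: the reflection terms $K^{i,n}$ carry no definite sign when subtracted, so extracting convergence hinges on combining the Skorohod condition with the monotonic ordering of the $Y^{i,n}$, and ultimately on the no-free-loop condition \eqref{ass:no_free_cycles}, which prevents pathological infinitesimal cycling that would inflate the total variation of the $K^{i,n}$. A related technical point is the continuity of the limit $K^i$ (needed to identify the Skorohod condition in the limit), which should rest on the quasi-left-continuity provided by assumption \ref{ass:continuous_A} together with the continuity of the obstacle inherited from that of $A$, $Y^j$ and $C(i,j)$.
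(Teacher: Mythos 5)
Your skeleton (non-reflected initialization, Picard iteration on the obstacles, monotonicity via comparison, a dominating solution $\hat Y$, uniform a priori bounds) is the same as the paper's. But the step where you recover $(U^i,Z^i,K^i)$ has a genuine gap. You propose a Cauchy estimate from It\^o's formula applied to $e^{\beta A_t}(Y^{i,n+p}-Y^{i,n})^2$, controlling the cross term by
\begin{equation*}
\int_t^T\bigl(L^{i,n+p-1}_s-L^{i,n-1}_s\bigr)\,dK^{i,n+p}_s\;\leq\;\sup_{s}\bigl|L^{i,n+p-1}_s-L^{i,n-1}_s\bigr|\cdot K^{i,n+p}_T ,
\end{equation*}
and asserting this vanishes ``by monotone convergence.'' That requires $\eval[\sup_s|L^{i,n}_s-L^{i}_s|^2]\to0$, i.e.\ \emph{uniform-in-time} convergence of the obstacles. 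In the Brownian setting (Hamad\`ene--Zhang) this follows from Dini's theorem because the iterates are continuous; here the $Y^{i,n}$ are c\`adl\`ag with jumps driven by the point process, and pointwise monotone convergence of c\`adl\`ag processes to a c\`adl\`ag limit does not give uniform convergence. Nothing in your argument supplies this, and it is precisely the obstruction the paper avoids: it only uses the uniform norm bound on $(U^{i,n},Z^{i,n},K^{i,n})$, extracts weak limits via Peng's monotonic limit theorem (Proposition \ref{prop:monotonic_point}), and then --- since weak convergence loses the Skorohod condition --- identifies the limit $Y^i$ with the solution $\bar Y^i$ of the reflected BSDE with the \emph{frozen} obstacle $\max_j(Y^j-C(i,j))$ through a two-sided Snell-envelope argument under the perturbed probabilities $\prob^{\rho^i+\epsilon}$. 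That identification is the core of the paper's proof and is absent from your proposal; your alternative route for the Skorohod condition (``weak convergence of the $dK^{i,n}$'' plus monotone convergence of $L^{i,n}$) runs into the same uniform-convergence issue.

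Two further points you gloss over. First, the base-case comparison $Y^{i,0}\le Y^{i,1}$ is not immediate from ``$K^{i,1}\ge0$'': the difference is a supermartingale only under $\prob^{\rho^i}$, which is \emph{not} equivalent to $\prob$ because the kernels may vanish; one must perturb to $\rho^i+\epsilon$, conclude $\prob^{\rho^i+\epsilon}$-a.s.\ (hence $\prob$-a.s.), and control the extra term $\epsilon\,\eval^{\rho^i+\epsilon}[\int\!\!\int|\bar U|\,\phi\,dA\mid\mathcal F_t]$ as $\epsilon\to0$ using \eqref{eq:square_martingale_conditioned}. This $\epsilon$-trick recurs throughout and cannot be skipped under Assumption \ref{ass:rhosbound}. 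Second, a single ``kernel majorizing every $\rho^j$'' does not dominate the generators $u\mapsto\int u(\rho^j-1)\phi(de)$ for $u<0$; the paper's dominating generator $h_s(u,e)$ takes the max over $j$ on $\{u\ge0\}$ and the min on $\{u<0\}$ for exactly this reason. The continuity of $K^i$ via the no-free-loop condition, which you mention at the end, is correctly placed and matches Proposition \ref{swit:Kconti}.
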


From now on, let assumptions \ref{ass:continuous_A}, \ref{ass:data_swit} and \ref{ass:rhosbound} hold. Fix a $\beta$ such that $(M')^2<\beta=\hat{\beta}-\delta$ for some fixed $\delta>0$. First consider the function defined on $[0,T]\times\mathbb{R}\times E$ as $$h_s(u,e)=u\max_{i\in \mathcal{J}}(\rho_s(e)^i-1)\indb{u\geq 0}+u\min_{i\in \mathcal{J}}(\rho_s(e)^i-1)\indb{u< 0}.$$

Notice $\int_Eh_s(U(e))\phi_s(de)$ satisfies the Lipschitz condition:
\begin{multline*}
|\int_Eh_s(U'(e))-h_s(U(e))\phi_s(de)|\leq\int_EM'|U'(e)-U(e)|\phi_s(de)\\\leq M'\left(\int_E|U'(e)-U(e)|^2\phi_s(de)\right)^{1/2}.
\end{multline*}
Define for $i\in\mathcal{J}$ $(Y^{i,0},U^{i,0},Z^{i,0})$ as solution to
\begin{multline}
\label{eq:startingY0}
Y^{i,0}_t=\xi^i+\int_t^Tf_s^idA_s+\int_t^Tg_s^ids+\int_t^T\int_EU_s^{i,0}(e)(\rho_s^i(e)-1)\phi_s(de)dA_s\\-\int_t^T\int_E U^{i,0}(e)q(dsde)-\int_t^TZ_s^{i,0}dW_s,
\end{multline}
and $(\hat{Y},\hat{U},\hat{Z})$ solution to
\begin{multline}
\label{eq:upper_limit_Y}
\hat{Y}_t=\max_i|\xi^i|+\int_t^T\max_i|f_s^i|dA_s+\int_t^T\max_i|g_s^i|ds\\+\int_t^T\int_Eh_s(\hat{U}_s(e),e)\phi_s(de)dA_s-\int_t^T\int_E\hat{U}(e)q(dsde)-\int_t^T\hat{Z}_sdW_s.
\end{multline}

\begin{prop}
	\label{swit:Y0andYhat}
	Let assumptions \ref{ass:rhosbound} and \ref{ass:data_swit} hold. For all $i\in\mathcal{J}$, there exist unique solutions $(Y^{i,0},U^{i,0},Z^{i,0})$ and $(\hat{Y},\hat{U},\hat{Z})$ in $L^{2,\hat{\beta}}(A)\cap L^{2,\hat{\beta}}(W)\times L^{2,\hat{\beta}}(p)\times L^{2,\hat{\beta}}(W)$ to \eqref{eq:startingY0} and \eqref{eq:upper_limit_Y} respectively. Moreover it holds that $Y^{i,0}_t\leq \hat{Y}_t$ $\prob$-a.s. and that
	\begin{equation}
	\label{eq:sup_bounded}
	\eval\left[\sup\limits_{t\in [0,T]}e^{\hat{\beta} A_t}\left(|Y_t^{i,0}|^2+|\hat{Y}_t|^2\right)\right]<\infty.
	\end{equation}
\end{prop}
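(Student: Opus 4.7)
For existence and uniqueness, both equations have a driver that is Lipschitz in $U$ with constant $M'$: for \eqref{eq:startingY0} this follows from Cauchy-Schwarz applied to $\int_E u(e)(\rho^i_s(e)-1)\phi_s(de)$ together with $|\rho^i - 1| \leq M'$, and for \eqref{eq:upper_limit_Y} it is exactly the Lipschitz estimate already shown for $h$. The data $\xi^i$, $f^i$, $g^i$ satisfy the required $L^{2,\hat\beta}$ integrability by assumption \ref{ass:data_swit}, and the data of the $\hat Y$ equation inherit this integrability because taking maxima over finitely many indices preserves the $L^2$-weighted norm. Since $\hat\beta > (M')^2$, standard existence/uniqueness results for Lipschitz BSDEs driven by $W$ and $q$ (e.g.\ those used in \cite{foresta2017optimal,confortola2014backward}) apply and yield unique triples in the claimed weighted spaces.

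The comparison $Y^{i,0}_t \leq \hat Y_t$ will follow from the comparison theorem for BSDEs driven by $W$ and $q$ stated in the appendix. The terminal inequality $\xi^i \leq \max_j |\xi^j|$ is immediate, as are $f^i_s \leq \max_j |f^j_s|$ and $g^i_s \leq \max_j |g^j_s|$. For the $u$-dependent parts it suffices to check $u(e)(\rho^i_s(e)-1) \leq h_s(u(e),e)$ pointwise; splitting according to the sign of $u$ reduces this to $\rho^i_s - 1 \leq \max_j(\rho^j_s - 1)$ (case $u\geq 0$) and $\rho^i_s - 1 \geq \min_j(\rho^j_s - 1)$ (case $u<0$), both of which are trivial. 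The jump-kernel $\rho^i_s(e) - 1 \geq -1$ ensures that the monotonicity condition required to apply the comparison theorem in the presence of jumps holds.

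For the supremum estimate \eqref{eq:sup_bounded}, apply Itô's formula to $e^{\hat\beta A_t} Y_t^2$ separately for $Y^{i,0}$ and $\hat Y$. The continuity of $A$ (assumption \ref{ass:continuous_A}) makes the weight $e^{\hat\beta A_t}$ itself continuous, so Itô's product rule produces the term $\hat\beta \int_t^T e^{\hat\beta A_s} Y_{s^-}^2 dA_s$ cleanly, together with the quadratic-variation contributions $\int_t^T e^{\hat\beta A_s}\bigl(|Z_s|^2 ds + \int_E |U_s(e)|^2 \phi_s(de) dA_s\bigr)$. The $M'$-Lipschitz driver is then absorbed by Young's inequality: the gap $\hat\beta - (M')^2 > 0$ allows the bad $|U|^2$ and $|Z|^2$ terms arising from the driver to be dominated by the quadratic variation, and the cross term with $Y^2$ is controlled by the $\hat\beta \int e^{\hat\beta A_s} Y_s^2 dA_s$ piece via Gronwall. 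Taking $\sup_t$ and applying Burkholder-Davis-Gundy to the Brownian and jump martingale parts, together with the $L^{2,\hat\beta}$ bounds on $U$, $Z$ and the data, yields \eqref{eq:sup_bounded}. The delicate step is the BDG bound for the jump martingale $\int_0^t\int_E e^{\hat\beta A_s}Y_{s^-}U_s(e)q(dsde)$; here continuity of $A$ lets the weight be split symmetrically between the $Y$ and $U$ factors inside the predictable quadratic variation, so that the resulting estimate closes in the chosen weighted norms.
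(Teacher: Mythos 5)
Your proposal is correct, and the first two parts (existence/uniqueness via the Lipschitz-in-$U$ structure with constant $M'$, and the comparison $Y^{i,0}\leq\hat Y$ via the driver inequality $f^i_s+\int_E u(e)(\rho^i_s(e)-1)\phi_s(de)\leq\max_j|f^j_s|+\int_E h_s(u(e),e)\phi_s(de)$ plus the kernel bound $-1\leq\rho^i-1\leq M-1$ needed for the jump comparison) follow the paper essentially verbatim. Where you genuinely diverge is the supremum estimate \eqref{eq:sup_bounded}: you run the standard It\^o--BDG machinery on $e^{\hat\beta A_t}|Y_t|^2$, splitting the weight inside the quadratic variation of the jump martingale and absorbing the driver via Young and the gap $\hat\beta-(M')^2>0$. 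The paper instead avoids It\^o's formula entirely: it takes conditional expectation in the equation (killing the martingale terms), bounds $e^{\hat\beta A_t/2}|\hat Y_t|$ by $\mathbb{E}[X\mid\mathcal{F}_t]$ for a single fixed $L^2$ random variable $X$ --- using Cauchy--Schwarz with the splitting $e^{-\hat\beta A_s/2}e^{\hat\beta A_s/2}$ and the explicit identity $\int_t^Te^{-\hat\beta A_s}dA_s=\hat\beta^{-1}(e^{-\hat\beta A_t}-e^{-\hat\beta A_T})$, valid because $A$ is continuous --- and then concludes by Doob's $L^2$ maximal inequality applied to the martingale $\mathbb{E}[X\mid\mathcal{F}_t]$. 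Both arguments are valid; the paper's is lighter (no quadratic variation, no BDG, no absorption/Gronwall step), while yours is the more systematic BSDE route and gives an explicit a priori bound in terms of the data norms. One small caution if you carry out your version: the running gain $g$ enters through a $ds$-integral, so It\^o on $e^{\hat\beta A_t}|Y_t|^2$ produces no $\hat\beta\int e^{\hat\beta A_s}|Y_s|^2\,ds$ term to absorb the cross term $2\int e^{\hat\beta A_s}Y_sg_s\,ds$; you must either use the weight $e^{\hat\beta(A_t+t)}$ (as the paper does elsewhere, e.g.\ in Lemma \ref{otherBSDE_lemma_penalized_bound}) or handle that term by Cauchy--Schwarz in $s$ and absorb it into $\sup_t e^{\hat\beta A_t}|Y_t|^2$ with a small constant.
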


\begin{proof}
	Existence and uniqueness in $L^{2,\hat{\beta}}(A)\cap L^{2,\hat{\beta}}(W)\times L^{2,\hat{\beta}}(p)\times L^{2,\hat{\beta}}(W)$ are provided by theorem \ref{appendix_simple_BSDE}
	
	To prove that $Y^{i,0}_t\leq\hat{Y}_t$ we can use the comparison result \ref{appendix_BSDE_teo_general_comparison_simple_BSDE}. Indeed it is sufficient to notice that 
	$$f_s^i+\int_E u_s(e)(\rho_s^i(e)-1)\phi_s(de)\leq \max_i|f_s^i|+\int_E h_s(u_s(e),e)\phi_s(de). $$
	This is true thanks to the definition of $h$. Indeed if we explicit it we have
	\begin{multline*}
	u_s(e)(\rho_s^i(e)-1)-u_s(e)\max_{i\in \mathcal{J}}(\rho_s(e)^i-1)\indb{u_s(e)\geq 0}\\-u_s(e)\min_{i\in \mathcal{J}}(\rho_s(e)^i-1)\indb{u_s(e)< 0}\leq 0.
	\end{multline*}
	As for the last point of the proposition, it can be shown directly. Let us see it for $\hat{Y}$, since for $Y^{i,0}$ is analogous. Take expected value conditioned on $\mathcal{F}_t$ on the equation for $\hat{Y}$, then consider the absolute value:
	\begin{align}
	\label{switching_another_sup_estimate}
	\begin{split}
	e^{\hat{\beta}A_t/2}|\hat{Y}_t|&\leq\econd{e^{\hat{\beta}A_t/2}|\hat{\xi}|+e^{\hat{\beta}A_t/2}\int_t^T|\hat{f}_s|dA_s+e^{\hat{\beta}A_t/2}\int_t^T|\hat{g}_s|ds}\\
	&+\econd{+e^{\hat{\beta}A_t/2}\int_t^T\int_E|h_s(\hat{U}_s)(e)|\phi_s(de)dA_s}\\
	&\leq \econd{e^{\hat{\beta}A_T/2}|\hat{\xi}|+\frac{1}{\hat{\beta}^{1/2}}\left(\int_0^Te^{\hat{\beta}A_s}|\hat{f}_s|^2dA_s\right)^{1/2}}\\
	&+\econd{\sqrt{T}\left(\int_0^Te^{\hat{\beta}A_s}|\hat{g}_s|^2ds\right)^{1/2}}\\
	&+\frac{M'}{\hat{\beta}^{1/2}}\econd{\left(\int_0^T\int_Ee^{\hat{\beta}A_s}|\hat{U}_s(e)|^2\phi_s(de)dA_s\right)^{1/2}}.
	\end{split}
	\end{align}
	The last inequality holds thanks to the following
	\begin{align*}
	e^{\hat{\beta}A_t/2}&\int_t^T\int_E|h(\hat{U}_s(e))|\phi_s(de)dA_s\\
	&\leq M'e^{\hat{\beta}A_t/2}\int_t^T\int_E|\hat{U}_s(e)|\phi_s(de)dA_s\\
	&=M'e^{\hat{\beta}A_t/2}\int_t^T\int_Ee^{-\hat{\beta}A_s/2}e^{\hat{\beta}A_s/2}|\hat{U}_s(e)|\phi_s(de)dA_s\\
	&\leq M'e^{\hat{\beta}A_t/2}\left(\int_t^Te^{-\hat{\beta}A_s}dA_s\right)^{1/2}
	\left(\int_0^T\int_Ee^{\hat{\beta}A_s}|\hat{U}_s(e)|^2\phi_s(de)dA_s\right)^{1/2}\\
	&=M'e^{\hat{\beta}A_t/2}\left(\frac{e^{-\hat{\beta}A_t}-e^{-\hat{\beta}A_T}}{\hat{\beta}^{1/2}}\right)^{1/2}
	\left(\int_0^T\int_Ee^{\hat{\beta}A_s}|\hat{U}_s(e)|^2\phi_s(de)dA_s\right)^{1/2}\\
	&\leq\frac{M'}{\hat{\beta}^{1/2}}\left(\int_0^T\int_Ee^{\hat{\beta}A_s}|\hat{U}_s(e)|^2\phi_s(de)dA_s\right)^{1/2},
	\end{align*}
	and analogously for the part with $\hat{f}$. As for the integral of $g$ we have
	$$
	e^{\hat{\beta}A_t/2}\int_t^T|\hat{g}_s|ds\leq\int_t^Te^{\hat{\beta} A_s/2}|\hat{g}_s|ds\leq \sqrt{T}\left(\int_0^Te^{\hat{\beta}A_s}|\hat{g}_s|^2dA_s\right)^{1/2}.
	$$
	The term on the right of inequality \eqref{switching_another_sup_estimate} is a martingale. By Doob's inequality, the expected value of its $\sup$ is bounded by the quantity
	\begin{multline*}
	C\evals{e^{\hat{\beta}A_T}|\hat{\xi}|^2+\frac{1}{\hat{\beta}}\int_0^Te^{\hat{\beta}A_s}|\hat{f}_s|^2dA_s+T\int_0^Te^{\hat{\beta}A_s}|\hat{g}_s|^2ds}\\
	+\frac{(M')^2}{\hat{\beta}}\evals{\int_0^T\int_Ee^{\hat{\beta}A_s}|\hat{U}_s(e)|^2\phi_s(de)dA_s},
	\end{multline*}
	which is finite.
\end{proof}

Notice that $(\hat{Y},\hat{U},\hat{Z},\hat{K})$, with $\hat{K}\equiv0$, for all $i$ solves the Reflected BSDE
\begin{equation}
\label{eq:limiting_RBSDE}
\begin{cases}
\hat{Y}_t=\hat{\xi}+\int_t^T\max_i|f_s^i|dA_s+\int_t^T \max_i|g_s^i|ds+\int_t^T\int_Eh_s(\hat{U}_s(e))\phi_s(de)dA_s\\
\qquad-\int_t^T\hat{U}(e)q(dsde)-\int_t^T\hat{Z}_sdW_s+\hat{K}_T-\hat{K}_t\\
\hat{Y}_t\geq\max\limits_{j}(\hat{Y}_t-C_t(i,j))\\
\int_0^T(\hat{Y}_t-\max\limits_{j}(\hat{Y}_t-C_t(i,j)))d\hat{K}_t=0
\end{cases}
\end{equation}

Consider now the sequence of RBSDEs:

\begin{equation}
\label{eq:sequence_RBSDE}
\begin{cases}
Y^{i,n}_t=\xi^i+\int_t^Tf_s^idA_s+\int_t^Tg_s^ids+\int_t^T\int_EU^{i,n}_s(e)(\rho_s^i(e)-1)\phi_s(de)dA_s)\\
\qquad -\int_t^TU_s^{i,n}(e)q(dsde-\int_t^TZ_s^idW_s+K^{i,n}_T-K^{i,n}_t\\
Y^{i,n}_t\geq\max\limits_{j\in \mathbb{A}_i}(Y^{i,n-1}_t-C_t(i,j))\\
\int_0^T(Y^{i,n}_{t^-}-\max\limits_{j\in \mathbb{A}_i}(Y^{i,n-1}_{t^-}-C_t(i,j)))dK^{i,n}_t=0
\end{cases}
\end{equation}

\begin{prop}
	\label{swit_existence_picard_iteration}
	For all $i\in\mathcal{J}$ and for all $n\geq 1$, \eqref{eq:sequence_RBSDE} admits a unique solution in $L^{2,\beta}(A)\cap L^{2,\beta}(W)\times L^{2,\beta}(p) \times  L^{2,\beta}(W)$. Moreover it holds that $Y^{i,0}_t\leq Y^{i,n}_t\leq Y^{i,n+1}_t\leq \hat{Y}_t$.
\end{prop}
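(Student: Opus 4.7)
The plan is to proceed by induction on $n$, treating the obstacle $S^{i,n}_t:=\max_{j\in\mathbb{A}_i}(Y^{j,n-1}_t-C_t(i,j))$ as externally given at step $n$ and solving a single standard reflected BSDE (not a coupled system) for each $i\in\mathcal{J}$. Assume inductively that $Y^{j,n-1}$ exists and satisfies $Y^{j,0}\leq Y^{j,n-1}\leq\hat Y$ for every $j$. Then $S^{i,n}$ is càdlàg adapted and, thanks to \eqref{eq:sup_bounded} and the integrability of the switching costs in Assumption \ref{ass:data_swit}\,\ref{swit_assdata_costs}, satisfies $\mathbb{E}[\sup_t e^{\hat\beta A_t}|S^{i,n}_t|^2]<\infty$. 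The driver $u\mapsto f^i_s+\int_E u(e)(\rho^i_s(e)-1)\phi_s(de)$ is Lipschitz in $L^2(\phi_s)$ with constant $M'$ by the calculation carried out just before Proposition \ref{swit:Y0andYhat}, so the existence-uniqueness theorem for reflected BSDEs driven by a marked point process and a Brownian motion (proved in \cite{foresta2017optimal} and recalled in the appendix) yields a unique quadruple $(Y^{i,n},U^{i,n},Z^{i,n},K^{i,n})$ in the prescribed weighted spaces.

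To establish the monotonicity chain $Y^{i,0}\leq Y^{i,n}\leq Y^{i,n+1}\leq \hat Y$ I would invoke the reflected BSDE comparison theorem three times. For $Y^{i,0}\leq Y^{i,n}$, the two equations share the same terminal condition and driver, and $Y^{i,0}$ can be read as a reflected BSDE with barrier $-\infty$ and zero push; since $K^{i,n}$ is nondecreasing this gives the bound. For $Y^{i,n}\leq Y^{i,n+1}$, the terminal datum and driver coincide while the obstacles are ordered, $S^{i,n+1}\geq S^{i,n}$, thanks to the inductive monotonicity $Y^{j,n}\geq Y^{j,n-1}$. For $Y^{i,n}\leq \hat Y$, I would compare drivers via $u(\rho^i_s(e)-1)\leq h_s(u,e)$ (which is the very definition of $h$), together with $|f^i|\leq\max_i|f^i|$, $|g^i|\leq\max_i|g^i|$ and $\xi^i\leq\max_i|\xi^i|$; moreover by the inductive bound $\hat Y\geq Y^{j,n-1}\geq Y^{j,n-1}-C_t(i,j)$, so $\hat Y\geq S^{i,n}$, which means that $(\hat Y,\hat U,\hat Z,\hat K\equiv 0)$ is a super-solution of the RBSDE solved by $Y^{i,n}$, and the comparison theorem closes the loop.

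The delicate point I expect is the application of the comparison theorem to equations whose generator contains $\int_E U(e)(\rho^i_s(e)-1)\phi_s(de)$: this integral is linear in $U$ but with a coefficient of variable sign, so the usual pointwise monotonicity-in-$u$ hypothesis for BSDEs driven by an integer-valued random measure fails. The structural device already built into the paper, namely dominating this integral by the Lipschitz majorant $h_s$ and comparing at the aggregated level rather than mark by mark, is exactly what makes the appendix comparison theorem applicable at each induction step. Once this is in place, the remainder of the argument reduces to routine RBSDE manipulations and an inductive bookkeeping on the monotone sequence $Y^{i,n}$.
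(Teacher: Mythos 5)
Your overall architecture — induction on $n$, solving at each step a single reflected BSDE with the known obstacle $S^{i,n}$ via the existence theorem of \cite{foresta2017optimal}, checking $\mathbb{E}[\sup_t e^{\hat\beta A_t}|S^{i,n}_t|^2]<\infty$ from Proposition \ref{swit:Y0andYhat} and Assumption \ref{ass:data_swit}, and obtaining $Y^{i,n}\leq Y^{i,n+1}$ and $Y^{i,n}\leq\hat Y$ from the reflected comparison theorem \ref{otherBSDE_comparison} (after rewriting the generator of $\hat Y$ using $u(\rho^i_s(e)-1)\leq h_s(u,e)$) — is exactly the paper's. The gap is in the step $Y^{i,0}\leq Y^{i,1}$. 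You propose to ``read $Y^{i,0}$ as a reflected BSDE with barrier $-\infty$ and zero push'' and conclude from the nondecreasingness of $K^{i,1}$. The comparison theorem \ref{otherBSDE_comparison} cannot be invoked here: it requires both equations to be genuine reflected BSDEs with c\`adl\`ag barriers satisfying the stated integrability and the minimal push condition, and a barrier $-\infty$ is not admissible (the paper flags precisely this: ``The comparison theorem cannot be applied \dots when we compare a reflected BSDE with a one without minimal push condition''). So one must argue directly on the difference $\bar Y=Y^{i,1}-Y^{i,0}$, and there the nondecreasingness of $K^{i,1}$ is \emph{not} enough: after discarding $K^{i,1}_T-K^{i,1}_t\geq0$ one is left with
\begin{equation*}
\bar Y_t\geq \int_t^T\int_E\bar U_s(e)(\rho^i_s(e)-1)\phi_s(de)dA_s-\int_t^T\int_E\bar U_s(e)q(dsde)-\int_t^T\bar Z_s\,dW_s,
\end{equation*}
whose right-hand side is a martingale only under $\mathbb{P}^{\rho^i}$. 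Since $\rho^i$ is allowed to vanish, $\mathbb{P}^{\rho^i}$ is merely absolutely continuous with respect to $\mathbb{P}$, not equivalent, so taking $\mathbb{P}^{\rho^i}$-conditional expectation yields $\bar Y_t\geq0$ only $\mathbb{P}^{\rho^i}$-a.s., which does not transfer to $\mathbb{P}$-a.s.

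This is the missing idea, and it is the bulk of the paper's proof of this proposition: one adds $\epsilon\int_t^T\int_E\bar U_s(e)\phi_s(de)dA_s$ to both sides for $\epsilon<\bar\epsilon$ with $3+(M+\bar\epsilon)^4<\eta$, so that the kernel $\rho^i+\epsilon$ is bounded away from zero and induces an \emph{equivalent} probability $\mathbb{P}^{\rho^i+\epsilon}\sim\mathbb{P}$; taking conditional expectation under $\mathbb{P}^{\rho^i+\epsilon}$ gives $\bar Y_t+\epsilon\,\mathbb{E}^{\rho^i+\epsilon}\bigl[\int_t^T\int_E\bar U_s(e)\phi_s(de)dA_s\,\big|\,\mathcal{F}_t\bigr]\geq0$ $\mathbb{P}$-a.s., and one then shows the $\epsilon$-term vanishes as $\epsilon\to0$ using the second-moment bound \eqref{eq:square_martingale_conditioned} on the density ratio together with Assumption \ref{ass:rhosbound}. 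Note also that the ``delicate point'' you identify (the variable sign of $\rho^i-1$) is a separate issue: it concerns the hypothesis \eqref{appendix_BSDE_gamma_condition} of Theorem \ref{appendix_BSDE_teo_general_comparison_simple_BSDE}, which is satisfied here with $\gamma=\rho^i-1$, and the domination by $h_s$ is the device for the upper bound against $\hat Y$ only; it does not repair the lower bound against $Y^{i,0}$, where the obstruction is the non-equivalence of the change of measure.
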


\begin{proof}
	We have the starting point $Y^{i,0}$, and existence and uniqueness of $Y^{i,1}$ is provided by  \cite[Theorem 4.1]{foresta2017optimal}. For every $n$, we can construct the solution that has $\max_{i\in \mathbb{A}_i}(Y^{j,n-1}-C_t(i,j))$ as known barrier.
	First we show by direct verification  that for all $i$, $Y^{i,1}\geq Y^{i,0}$.
	By considering the difference $\bar{Y}=Y^{i,1}-Y^{i,0}$ we obtain, noting that $K_T^{i,1}-K_t^{i,0}\geq 0$,

	\begin{equation}
	\bar{Y}_t\geq\int_t^T\int_E\bar{U}_s(e)(\rho_s(e)-1)\phi_s(de)dA_s-\int_t^T\int_E \bar{U}_s(e)q(dsde)-\int_t^T\bar{Z}_sdW_s.
	\label{swit_first_minus_zero}
	\end{equation}
	The right hand term in \eqref{swit_first_minus_zero} is a $\prob^{\rho^i}$-martingale, but since $\prob^{\rho^i}$ is not equivalent to $\prob$, we cannot conclude that $\bar{Y}_t\geq 0$ by taking expectation. Instead
	 we add to both sides of the equation the quantity
	$$
	\epsilon\int_t^T\int_E\bar{U}_s(e)\phi_s(de)dA_s,
	$$
	for any $\epsilon<\bar{\epsilon}$, where $\bar{\epsilon}$ is such that $3+(M+\bar {\epsilon})^4<\eta$. 
	We obtain 
	\begin{multline}
	\bar{Y}_t+\epsilon\int_t^T\int_E\bar{U}_s(e)\phi_s(de)dA_s\geq-\int_t^T\int_E \bar{U}_s(e)(p(dsde)-(\rho_s(e)+\epsilon)\phi_s(de)dA_s)\\-\int_t^T\bar{Z}_sdW_s.
	\label{swit_first_minus_zero_changed}
	\end{multline}
	$\rho^i+\epsilon$ satisfies assumptions \ref{ass:rhosbound} and it induces a probability $\prob^{\rho^i+\epsilon}\sim\prob$ through the ``Girsanov" martingale
	$$
	L_t^{\rho^i+\epsilon}=\mathcal{E}(\rho^i+\epsilon).
	$$
The term in the right hand side of \eqref{swit_first_minus_zero_changed} is a $\prob^{\rho^i+\epsilon}$-martingale we can take expected value under $\prob^{\rho+i}$ conditional on $\mathcal{F}_t$ and obtain that for all $\epsilon <\bar{\epsilon}$
	$$
	\bar{Y}_t+\epsilon\econdm{\int_t^T\int_E\bar{U}_s(e)\phi_s(de)dA_s}{\rho+\epsilon}\geq 0 \qquad \prob^{\rho^i+\epsilon}\text{-a.s.}
	$$
	and thus also $\prob$-a.s.
	Next we need to show that for $\epsilon\rightarrow 0$, the term
	$$
	\epsilon\econdm{\int_t^T\int_E\bar{U}_s(e)\phi_s(de)dA_s}{\rho+\epsilon}\rightarrow 0.
	$$
	We only need to show that $\econdm{\int_t^T\bar{U}_s(e)\phi_s(de)dA_s}{\rho+\epsilon}\rightarrow 0$ is bounded by a finite random variable. We have
	\begin{align*}
	&\econdm{\int_t^T\bar{U}_s(e)\phi_s(de)dA_s}{\rho+\epsilon}\leq\econdm{\int_t^T\int_E|\bar{U}_s(e)|\phi_s(de)dA_s}{\rho+\epsilon}\\
	&=\frac{\econd{L_T^{\rho^i+\epsilon}\int_t^T\int_E|\bar{U}_s(e)|\phi_s(de)dA_s}}{L_t^{\rho^i+\epsilon}}\\
	&\leq \econd{\left(\frac{L_T^{\rho^i+\epsilon}}{L_t^{\rho^i+\epsilon}}\right)}\cdot\econd{\left(\int_t^T\int_E|\bar{U}_s(e)|\phi_s(de)dA_s\right)^2}\\
	&\leq \frac{1}{\beta}\econd{e^{\eta A_T}}\cdot\econd{\int_t^Te^{\beta A_s}|\bar{U}_s|^2(e)\phi_s(de)dA_s},
	\end{align*}
	where in the last line we used relation \eqref{eq:square_martingale_conditioned} and the usual trick (see the proof of proposition \ref{swit:Y0andYhat}) to estimate the square of an integral with respect to the compensator.
	
	Using the comparison theorem \ref{otherBSDE_comparison}, we also obtain that $\hat{Y}\geq{Y}^{i,1}$ for all $i$.
	Indeed, the obstacle $\max_j(Y^{j,0}-C_t(i,j))$ for $Y^{i,1}$ is smaller than the one for $\hat{Y}$, and we can transform the equation for $\hat{Y}$ into the suitable form by adding and subtracting $\int_t^T\int_E \hat{U}_s(e)(\rho_s^i(e)-1)\phi_s(de)dA_s$. This way the generator of the equation for $\hat{Y}$ becomes
	$$
	\max_i|f_s^i|+\int_E \underbracket{\left[h_s(\hat{U}_s(e),e)-\hat{U}_s(e)(\rho_s^i(e)-1)\right]}_{\ge 0}\phi_s(de) +\int_E\hat{U}_s(e)(\rho_s^i(e)-1)\phi_s(de)
	$$
	and thus 
	$$
	f_s^i\leq \max_i|f_s^i|+\int_E h_s(\hat{U}_s(e),e)-\hat{U}_s(e)(\rho_s^i(e)-1)\phi_s(de),
	$$
	and we can apply the theorem.
	By induction we can then prove that $Y^{i,n}\leq Y^{i,n+1}\leq \hat{Y}$, repeating the same reasoning.
\end{proof}

This means that we have an increasing sequence of processes $Y^{i,n}$, and then there exists a limit $Y^i$ such that
$$
Y^{i,n}_t\nearrow Y^i_t\leq \hat{Y}_t,
$$
for all $i$, for all $t$, $ \mathbb{P} $-a.s. By dominated convergence theorem we also have that $\|Y^{i,n}-Y^i\|_{L^{2,\beta}(A)}\rightarrow 0$ as $n\rightarrow\infty$.\\
We have the following bound for the norms, which can be obtained by applying the Ito Formula to $e^{\beta A_s}|Y_s|^2$ (see \cite[Proposition 3.1]{foresta2017optimal} for the full details)
\begin{multline}
\label{eq:sequence_bound}
\|Y^{i,n}\|_{L^{2,\beta}(A)}+\|Y^{i,n}\|_{L^{2,\beta}(W)}\|U^{i,n}\|_{L^{2,\beta}(p)}+\|Z^{i,n}\|_{L^{2,\beta}(W)}\leq \eval\left[e^{\beta A_T}|\xi|^2\right]\\+\evals{\int_0^Te^{\beta A_s}|g_s^i|^2ds}+\evals{\int_0^Te^{\beta A_s}|f_s^i|^2dA_s}+\evals{\sup\limits_{t\in [0,T]}e^{\beta A_t}|Y_t^{i,n}|^2}
\\+\evals{\sup\limits_{t\in [0,T]}e^{(\beta+\delta) A_t}|\max\limits_{j\in\mathbb{A}_i}(Y_t^{j,n-1}-C_t(i,j))|^2}.
\end{multline}

Now since $Y^{i,n}$ is a nondecreasing sequence, it holds that
$$
|Y^{i,n}_t|\leq \max\lbrace|Y^{i,0}_t|,|\hat{Y}_t|\rbrace.
$$
Taking into account proposition \ref{swit:Y0andYhat}, we have that
$$
\evals{\sup_{t\in[0,T]} e^{\hat{\beta} A_t}\max\lbrace|Y^{i,0}_t|,|\hat{Y}_t|\rbrace}
$$
and thus the last term in \eqref{swit:Y0andYhat} is bounded by a constant independent of $n$. 

The sequence $(U^{i,n},Z^{i,n})$ is bounded in norm thanks to \eqref{eq:sequence_bound}. Then we can apply proposition \ref{prop:monotonic_point} and obtain the existence of $(U^i,Z^i,K^i) \in L^{2,\beta}(p) \times L^{2,\beta}(W) \times \mathcal{I}^2$ such that:
\begin{equation}
\begin{cases}
Y_t^i=\xi^i+\int_t^Tf_s^idA_s+\int_t^Tg_s^ids+\int_t^T\int_EU_s^i(e)(\rho_s^i(e)-1)\phi_s(de)dA_s\\
\qquad -\int_t^TU^i(e)q(dsde)-\int_t^TZ_s^idW_s+K^i_T-K^i_t\\
Y_t^i\geq\max\limits_{j\in \mathbb{A}_i}(Y_t^j-C_t(i,j)
\end{cases}
\end{equation}

Now consider, for $i\in\mathcal{J}$, the RBSDE with known obstacle

\begin{equation}
\label{eq:system_given}
\begin{cases}
\bar{Y}_t^i=\xi^i+\int_t^Tf_s^idA_s+\int_t^Tg_s^ids+\int_t^T\bar{U}_s^i(e)(\rho_s^i(e)-1)\phi_s(de)dA_s\\
\qquad -\int_t^T\bar{U}_s^i(e)q(dsde)-\int_t^TZ_s^idW_s+\bar{K}_T^i-\bar{K}_t^i\\
\bar{Y}_t^i\geq \max\limits_{j\in \mathbb{A}_i}(Y^j_t-C_t(i,j))\\
\int_0^T(\bar{Y}_t^i- \max\limits_{j\in \mathbb{A}_i}(Y^j_t-C_t(i,j)))d\bar{K}_t^i=0.
\end{cases}
\end{equation}
The solution $(\bar{Y},\bar{U},\bar{Z},\bar{K})$ exists thanks to theorem \cite[Theorem 4.1]{foresta2017optimal}.
Thanks to the comparison result \ref{otherBSDE_comparison}, we have that $Y_t^{i,n}\leq \bar{Y}_t^i$ for all $n$, and thus
\begin{equation}
\label{eq:no_bar_smaller_than_bar}
Y_t^{i}\leq \bar{Y}_t^i.
\end{equation}
We want to prove the reverse inequality, which is a bit more involved.
To this end we will use a couple of lemmas. Fix $\bar{\epsilon}$ such that $3+(M+\bar {\epsilon})^4<\eta$. Here we consider again, for all $i\in\mathcal{J}$ and for all $\epsilon<\bar{\epsilon}$, the probability $\mathbb{P}^{\rho^i+\epsilon}$ introduced as 
$$
\frac{\mathbb{P}^{\rho^i+\epsilon}}{d\mathbb{P}}=\mathcal{E}(\rho^i+\epsilon)
$$
For each $i\in\mathcal{J}$ define 
\begin{equation}
\eta^i_t=\xi^i\indb{t\geq T}+\int_{0}^{t\wedge T} f^i_sdA_s+\int_{0}^{t\wedge T} g^i_sds+\max\limits_{j\in \mathbb{A}_i}(Y^j_t-C_t(i,j))\indb{t<T}.
\end{equation}
\begin{lemma}
	\begin{equation}
	\bar{Y}^i_t+\int_0^tf_s^idA_s+\int_0^tg_s^ids+\epsilon\essup_{\tau\geq t}\eval^{\rho^i+\epsilon}\left[\int_t^\tau\int_E\bar{U}_s^i(e)\phi_s(de)dA_s\right.\left|\vphantom{\int_t^\tau}\mathcal{F}_t\right]
	\end{equation}
	is the $\mathbb{P}^{\rho^i+\epsilon}$ Snell's envelope of $\eta^i$.
\end{lemma}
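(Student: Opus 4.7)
The approach is to rewrite \eqref{eq:system_given} as a reflected BSDE under the equivalent measure $\mathbb{P}^{\rho^i+\epsilon}$ and then apply the classical Snell envelope characterization. Using the identity $q(dsde) = q^{\rho^i+\epsilon}(dsde) + (\rho^i_s(e)+\epsilon-1)\phi_s(de)dA_s$, the jump and drift contributions of \eqref{eq:system_given} combine as
$$
\int_t^T \int_E \bar{U}^i(\rho^i-1)\phi\,dA_s - \int_t^T \int_E \bar{U}^i\, q(dsde) = -\epsilon\int_t^T\int_E \bar{U}^i\phi\,dA_s - \int_t^T\int_E\bar{U}^i\, q^{\rho^i+\epsilon}(dsde).
$$
Since $\rho^i+\epsilon$ satisfies Assumption \ref{ass:rhosbound} for $\epsilon<\bar\epsilon$, Proposition \ref{prop:still_mg_and_square} guarantees that $\int \bar{U}^i\, q^{\rho^i+\epsilon}$ and $\int Z^i\,dW$ are $\mathbb{P}^{\rho^i+\epsilon}$-martingales. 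Hence under $\mathbb{P}^{\rho^i+\epsilon}$, equation \eqref{eq:system_given} is a reflected BSDE with progressively measurable effective driver $F^\epsilon_s := f^i_s\,dA_s + g^i_s\,ds - \epsilon \int_E \bar{U}^i_s(e)\phi_s(de)\,dA_s$ (treating $\bar{U}^i$ as already determined).

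The classical Snell envelope characterization of reflected BSDE then yields, with $L_\tau := \max_{j\in\mathbb{A}_i}(Y^j_\tau - C_\tau(i,j))$,
$$
\bar{Y}^i_t = \essup_{\tau\geq t}\eval^{\rho^i+\epsilon}\!\left[\int_t^\tau F^\epsilon_s + L_\tau\ind_{\tau<T} + \xi^i\ind_{\tau=T}\,\bigg|\,\mathcal{F}_t\right].
$$
Adding $\int_0^t f^i\,dA + \int_0^t g^i\,ds$ and invoking the definition of $\eta^i$:
$$
\bar{Y}^i_t + \int_0^t f^i\,dA + \int_0^t g^i\,ds = \essup_{\tau\geq t}\eval^{\rho^i+\epsilon}\!\left[\eta^i_\tau - \epsilon \int_t^\tau\int_E \bar{U}^i\phi\,dA_s\,\bigg|\,\mathcal{F}_t\right].
$$

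Denote the full left-hand side of the stated identity by $\tilde Y_t$ and the essential supremum appearing there by $V_t$. The inequality $\tilde{Y}_t \geq \essup_\tau \eval^{\rho^i+\epsilon}[\eta^i_\tau|\mathcal{F}_t]$ is immediate: for any stopping time $\tau\geq t$, $V_t\geq \eval^{\rho^i+\epsilon}[\int_t^\tau\int_E \bar{U}^i\phi\,dA_s|\mathcal{F}_t]$ by definition, and combining with the previous display yields $\tilde{Y}_t\geq \eval^{\rho^i+\epsilon}[\eta^i_\tau|\mathcal{F}_t]$; take essential supremum over $\tau$. For the converse, I would show that $\tilde Y$ is a $\mathbb{P}^{\rho^i+\epsilon}$-supermartingale dominating $\eta^i$ and invoke the minimality property of the Snell envelope. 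Writing $\tilde{Y}_t = (\bar{Y}^i_t + M_t - \epsilon H_t) + \epsilon(H_t + V_t)$ with $M_t := \int_0^t (f^i dA + g^i ds)$ and $H_t := \int_0^t\int_E \bar{U}^i\phi\,dA_s$ displays it as a sum of the two $\mathbb{P}^{\rho^i+\epsilon}$-Snell envelopes of $\eta^i - \epsilon H$ and of $\epsilon H$, hence a supermartingale; dominance at $t<T$ follows from $\bar{Y}^i\geq L$ and $V\geq 0$, and at $t=T$ by direct substitution, since $\bar Y^i_T = \xi^i$ and $V_T=0$.

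\textbf{Main obstacle.} The delicate step is establishing minimality for the combined supermartingale $\tilde Y$, which is not automatic from its structure as a sum of two Snell envelopes. I expect this to require a Doob--Meyer analysis showing that the total increasing part of $\tilde{Y}$ grows only on $\{\tilde{Y} = \eta^i\}$, combining the minimal push condition for $\bar K^i$ (active on $\{\bar{Y}^i = L\}$) with the corresponding condition for the Snell envelope of $H$ (its increasing part active on $\{V=0\}$), and verifying that these push events line up precisely with $\{\tilde{Y} = \eta^i\}$.
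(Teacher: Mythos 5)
Your first inequality, $\tilde Y_t\ge \essup_\tau\eval^{\rho^i+\epsilon}[\eta^i_\tau|\mathcal F_t]$, is essentially the first half of the paper's argument (the paper obtains it directly from the equation between $t$ and an arbitrary $\tau$, using the barrier condition and the monotonicity of $\bar K^i$, rather than by citing a ``classical Snell characterization'' under $\mathbb P^{\rho^i+\epsilon}$ — a characterization which, for the transformed driver containing the extra term $-\epsilon\int_E\bar U^i\phi\,dA$, is not available off the shelf in this setting and is essentially what the lemma is proving). That half is fine.

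The genuine gap is the converse inequality $\tilde Y_t\le R^{\rho^i+\epsilon}(\eta^i)_t$, which you flag as the ``main obstacle'' but do not close, and the route you sketch would not close it. Writing $\tilde Y$ as the sum of the Snell envelopes of $\eta^i-\epsilon H$ and of $\epsilon H$ only shows that $\tilde Y$ is a supermartingale dominating $\eta^i$, i.e.\ it reproves $\tilde Y\ge R(\eta^i)$; minimality fails in general for such a sum because the two increasing parts act on different contact sets — the increasing part of the Snell envelope of $\epsilon H$ charges $\{V=0\}$ while $\bar K^i$ charges $\{\bar Y^i=L\}$, and neither is contained in $\{\tilde Y=\eta^i\}=\{\bar Y^i=L\}\cap\{V=0\}$. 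The sum of two Snell envelopes is typically strictly larger than the Snell envelope of the sum, so the Doob--Meyer ``push events line up'' scenario you hope for has no reason to occur. The paper's device for this direction is different and is the essential idea you are missing: evaluate the equation at the single near-optimal stopping time
\begin{equation*}
D_t^\delta=\inf\left\lbrace s\geq t : \bar{Y}^i_s\leq \max\limits_{j\in \mathbb{A}_i}(Y^j_s-C_s(i,j)) +\delta \right\rbrace \wedge T,
\end{equation*}
at which $\bar K^i_{D_t^\delta}=\bar K^i_t$ (no push has occurred) and $\bar Y^i_{D_t^\delta}$ is within $\delta$ of the obstacle, so that the inequality in your first step reverses (up to $\delta$) for that particular $\tau$; one then lets $\delta\to0$. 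Your proposal should be reworked around this stopping time rather than around a decomposition into two separate optimal stopping problems.
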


\begin{proof}
	The reasoning is standard for the connection between reflected BSDE and optimal stopping, minus the fact that we have to consider a different probability. By adding $\epsilon\int_t^\tau\int_E\bar{U}_s^i(e)\phi_s(de)dA_s$ to both sides of the equation solved by $\bar{Y}^i$, considered between $t$ and a generic $\tau\geq t$ we obtain
	\begin{multline*}
	\bar{Y}_t^i+\epsilon\int_t^\tau\int_E\bar{U}_s^i(e)\phi_s(de)dA_s=\bar{Y}_\tau^i+\int_t^\tau f_s^idA_s+\int_t^\tau g_s^ids\\+\int_t^\tau\bar{U}_s^i(e)(\rho_s^i(e)+\epsilon-1)\phi_s(de)dA_s
	-\int_t^\tau\bar{U}_s^i(e)q(dsde)-\int_t^\tau\bar{Z}_s^idW_s+\bar{K}_\tau^i-\bar{K}_t^i.
	\end{multline*}
	Since $\bar{K}$ is increasing, and $\bar{Y}^i_\tau\geq \xi^i\indb{\tau\geq T}+\max\limits_{j\in \mathbb{A}_i}(Y^j_\tau-C_\tau(i,j))\indb{\tau< T}$, we have
	
	\begin{multline*}
	\bar{Y}_t^i+\epsilon\int_t^\tau\int_E\bar{U}_s^i(e)\phi_s(de)dA_s+\int_0^tf_s^idA_s+\int_0^tg_s^ids\geq\\ \xi^i+\max\limits_{j\in \mathbb{A}_i}(Y^j_\tau-C_\tau(i,j))\indb{\tau< T}+\int_0^\tau f_s^idA_s+\int_0^\tau g_s^ids\\+\int_t^\tau\bar{U}_s^i(e)(\rho_s^i(e)+\epsilon-1)\phi_s(de)dA_s-\int_t^\tau \bar{Z}_s^i
	-\int_t^\tau\bar{U}_s^i(e)q(dsde).
	\end{multline*}
	Being a $\mathbb{P}^{\rho^i+\epsilon}$-martingale, by taking expectation under $\mathbb{P}^{\rho^i+\epsilon}$ the term
	$$
	+\int_t^\tau\bar{U}_s^i(e)(\rho_s^i(e)+\epsilon-1)\phi_s(de)dA_s
	-\int_t^\tau\bar{U}_s^i(e)q(dsde)-\int_t^\tau \bar{Z}_s^i dW_s
	$$
	disappears and we obtain
	\begin{multline*}
	\bar{Y}_t^i+\epsilon\econdm{\int_t^\tau\int_E\bar{U}_s^i(e)\phi_s(de)dA_s}{\rho^i+\epsilon}
	+\int_0^tf_s^idA_s +\int_0^tg_s^i ds
	\\\geq \econdm{\xi^i\indb{\tau\geq T}+\max\limits_{j\in \mathbb{A}_i}(Y^j_\tau-C_\tau(i,j))\indb{\tau<T}+\int_0^\tau f_s^idA_s+\int_0^\tau g_s^ids}{\rho^i+\epsilon}.
	\end{multline*}
	By taking the $\essup_{\mathbf{a}\in\mathcal{A}_t^i}$ over $\tau\geq t$ we obtain
	
	\begin{multline}
	\label{eq:rho_epsilon_snell_ge}
	\bar{Y}_t^i+\epsilon\essup_{\tau\geq t}\econdm{\int_t^\tau\int_E\bar{U}_s^i(e)\phi_s(de)dA_s}{\rho^i+\epsilon}
	\\+\int_0^tf_s^idA_s+\int_0^tg_s^ids
	\geq R^{\rho^i+\epsilon}(\eta^i),
	\end{multline}
	where $R^{\rho^i+\epsilon}(\eta^i)$ indicates the Snell envelope of $\eta^i$ under the probability $\mathbb{P}^{\rho^i+\epsilon}$.
	For the reverse inequality, consider the stopping time
	\begin{equation}
	D_t^\delta=\inf\left\lbrace s\geq t : \bar{Y}_s\leq \max\limits_{j\in \mathbb{A}_i}(Y^j_s-C_s(i,j)) +\delta \right\rbrace \wedge T.
	\end{equation}
	We repeat the reasoning above, but in this case $ \bar{K}_{D_t^\delta}=\bar{K}_t $ and $$\bar{Y}_{D_t^\delta}\leq \xi^i\indb{D_t^\delta\geq T}+\max\limits_{j\in \mathbb{A}_i}(Y^j_{D_t^\delta}-C_{D_t^\delta}(i,j))\indb{D_t^\delta<T},$$
	and we obtain
	\begin{multline*}
	\bar{Y}_t^i+\epsilon\econdm{\int_t^{D_t^\delta}\int_E\bar{U}_s^i(e)\phi_s(de)dA_s}{\rho^i+\epsilon}
	+\int_0^tf_s^idA_s+\int_0^tg_s^ids
	\\\leq \econdm{\xi^i\indb{{D_t^\delta}\geq T}+\max\limits_{j\in \mathbb{A}_i}(Y^j_{D_t^\delta}-C_{D_t^\delta}(i,j))+\int_0^{D_t^\delta} f_s^idA_s+\int_0^{D_t^\delta}g_s^ids}{\rho^i+\epsilon}+\delta.
	\end{multline*}
	By taking the $\essup$ and sending $\delta$ to zero we obtain
	\begin{multline*}
	\bar{Y}_t^i+\epsilon\essup_{\tau\geq t}\econdm{\int_t^\tau\int_E\bar{U}_s^i(e)\phi_s(de)dA_s}{\rho^i+\epsilon}
	\\+\int_0^tf_s^idA_s
	+\int_0^tg_s^ids \leq R^{\rho^i+\epsilon}(\eta^i),
	\end{multline*}
	that together with \eqref{eq:rho_epsilon_snell_ge} completes the proof.
\end{proof}

On the other hand we have

\begin{lemma}
	The process
	$$
	{Y}^i_t+\int_0^tf_s^idA_s+\int_0^tg_s^i ds+\epsilon\essup_{\tau\geq t}\eval^{\rho^i+\epsilon}\left[\int_t^\tau\int_E{U}_s^i(e)\phi_s(de)dA_s\right.\left|\vphantom{\int_t^\tau}\mathcal{F}_t\right]
	$$
	is a $\mathbb{P}^{\rho^i+\epsilon}$-supermartingale such that
	$$
	{Y}^i_t+\int_0^tf_s^idA_s+\int_0^tg_s^i ds+\epsilon\essup_{\tau\geq t}\eval^{\rho^i+\epsilon}\left[\int_t^\tau\int_E{U}_s^i(e)\phi_s(de)dA_s\right.\left|\vphantom{\int_t^\tau}\mathcal{F}_t\right]\geq \eta^i_t
	$$
\end{lemma}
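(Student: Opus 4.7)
The plan is to establish separately the $\mathbb{P}^{\rho^i+\epsilon}$-supermartingale property and the pointwise domination $\Psi_t\geq\eta^i_t$, where $\Psi_t:=Y^i_t+\int_0^t f^i_s dA_s+\int_0^t g^i_s ds+\epsilon X_t$ and $X_t$ denotes the essential supremum appearing in the statement.

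First I would rewrite the BSDE for $Y^i$ between $t$ and $s\geq t$ exactly as in the preceding lemma, by adding $\epsilon\int_t^s\int_E U^i_r(e)\phi_r(de)dA_r$ to both sides; a direct computation regroups the point-process stochastic integrals as $-\int_t^s\int_E U^i_r(e)q^{\rho^i+\epsilon}(drde)$, with $q^{\rho^i+\epsilon}(drde):=p(drde)-(\rho^i_r(e)+\epsilon)\phi_r(de)dA_r$. Since $\rho^i+\epsilon$ still satisfies \ref{ass:rhosbound}, Proposition \ref{prop:still_mg_and_square} ensures that both this integral and $\int Z^i dW$ are $\mathbb{P}^{\rho^i+\epsilon}$-martingales. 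Taking $\eval^{\rho^i+\epsilon}[\,\cdot\mid\mathcal{F}_t]$ and using $K^i_s-K^i_t\geq 0$, one arrives at
$$
Y^i_t+\int_0^t f^i_r dA_r+\int_0^t g^i_r dr+\epsilon\,\eval^{\rho^i+\epsilon}\left[\int_t^s\int_E U^i_r(e)\phi_r(de)dA_r\mid\mathcal{F}_t\right]\geq \eval^{\rho^i+\epsilon}\left[Y^i_s+\int_0^s f^i_r dA_r+\int_0^s g^i_r dr\mid\mathcal{F}_t\right].
$$

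Next, setting $V_\tau:=\int_0^\tau\int_E U^i_r(e)\phi_r(de)dA_r$, I would observe that $X_t+V_t=\essup_{\tau\geq t}\eval^{\rho^i+\epsilon}[V_\tau\mid\mathcal{F}_t]$ is the $\mathbb{P}^{\rho^i+\epsilon}$-Snell envelope of $V$ restricted to $\tau\geq t$, hence a $\mathbb{P}^{\rho^i+\epsilon}$-supermartingale. This yields $\eval^{\rho^i+\epsilon}[X_s+\int_t^s\int_E U^i_r(e)\phi_r(de)dA_r\mid\mathcal{F}_t]\leq X_t$, which multiplied by $\epsilon$ and combined with the previous inequality produces $\Psi_t\geq\eval^{\rho^i+\epsilon}[\Psi_s\mid\mathcal{F}_t]$. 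For the pointwise bound $\Psi_t\geq\eta^i_t$, the choice $\tau=t$ in the essup gives $X_t\geq 0$, so the obstacle condition $Y^i_t\geq\max_{j\in\mathbb{A}_i}(Y^j_t-C_t(i,j))$ on $\{t<T\}$ together with $Y^i_T=\xi^i$ on $\{t\geq T\}$ directly produces $\Psi_t\geq\eta^i_t$.

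The technically delicate point will be the Snell envelope supermartingale property of $X_t+V_t$: the family $\{\eval^{\rho^i+\epsilon}[V_\tau\mid\mathcal{F}_t]:\tau\in\mathcal{T}_t\}$ is upward-directed (pasting two stopping times on complementary $\mathcal{F}_t$-events yields a stopping time whose conditional expectation dominates both), so the essup is realized as a monotone limit of a sequence, and uniform integrability under $\mathbb{P}^{\rho^i+\epsilon}$ --- obtained via Cauchy--Schwarz, the moment bound \eqref{eq:square_martingale_conditioned} on $L^{\rho^i+\epsilon}$, and $U^i\in L^{2,\beta}(p)$ --- permits the interchange with the conditional expectation.
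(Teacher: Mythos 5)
Your proof is correct, and it reaches the conclusion by a slightly different decomposition than the paper. The paper takes the BSDE for $Y^i$ between $t$ and a generic stopping time $\tau$, adds the $\epsilon$-correction, takes $\mathbb{P}^{\rho^i+\epsilon}$-conditional expectation and then the essential supremum over $\tau\geq t$, thereby identifying the whole process plus $K^i_t$ with the Snell envelope of $Y^i_\cdot+\int_0^\cdot f^i dA+\int_0^\cdot g^i ds+K^i_\cdot$; the supermartingale property then follows in one line because a Snell envelope minus an increasing process is a supermartingale. You instead isolate the term $\epsilon X_t$ and recognize $X_t+V_t$, with $V_\tau=\int_0^\tau\int_E U^i_r(e)\phi_r(de)dA_r$, as the Snell envelope of $V$ alone, while the remaining part $Y^i_t+\int_0^t f^i dA+\int_0^t g^i ds$ is handled directly from the BSDE using only the positivity of the increments of $K^i$; the two inequalities then combine by cancellation of the common term $\epsilon\,\eval^{\rho^i+\epsilon}[\int_t^s\int_E U^i\phi\,dA\mid\mathcal{F}_t]$. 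Both routes rest on the same key fact (a Snell envelope is a supermartingale), but yours applies it to the simpler process $V$, so the class-D/uniform-integrability verification you flag at the end reduces to the bound on $\int_0^T\int_E|U^i|\phi\,dA$ already obtained via Cauchy--Schwarz and \eqref{eq:square_martingale_conditioned}, whereas the paper's version implicitly requires the analogous control on the family indexed by $\tau$ involving $Y^i$ and $K^i$ as well. The domination $\Psi_t\geq\eta^i_t$ is argued identically in both: $X_t\geq 0$ by taking $\tau=t$, plus the obstacle condition and $Y^i_T=\xi^i$.
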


\begin{proof}
	Clearly we have
	\begin{multline*}
	Y_t^i+\int_0^tf_s^idA_s+\int_0^tg_s^i ds\geq\\ \xi^i\indb{t\geq T}+\max\limits_{j\in \mathbb{A}_i}(Y^j_t-C_t(i,j))\indb{t<T}+\int_0^tf_s^idA_s+\int_0^tg_s^i ds=\eta^i_t,
	\end{multline*}
	and since $\epsilon\essup_{\tau\geq t}\eval^{\rho^i+\epsilon}\left[\int_t^\tau\int_E{U}_s^i(e)\phi_s(de)dA_s\right.\left|\vphantom{\int_t^\tau}\mathcal{F}_t\right]\geq 0$ we obtain the second property, that is
	$$
	Y_t^i+\int_0^tf_s^idA_s+\int_0^tg_s^i ds+\epsilon\essup_{\tau\geq t}\eval^{\rho^i+\epsilon}\left[\int_t^\tau\int_E{U}_s^i(e)\phi_s(de)dA_s\right.\left|\vphantom{\int_t^\tau}\mathcal{F}_t\right]\geq \eta^i_t
	$$
	To show it is a supermartingale consider the equation solved by $Y^i$ between $t$ and a generic stopping time $\tau$, where we add to both sides of the equation the quantity $\epsilon\int_t^\tau\int_E{U}_s^i(e)\phi_s(de)dA_s$ and take conditional expected value under the measure $\mathbb{P}^{\rho^i+\epsilon}$:
	\begin{multline*}
	Y_t^i+\int_0^tf_s^idA_s+\int_0^tg_s^i ds+\epsilon\eval^{\rho^i+\epsilon}\left[\int_t^\tau\int_E{U}_s^i(e)\phi_s(de)dA_s\right.\left|\vphantom{\int_t^\tau}\mathcal{F}_t\right]\\=\econdm{Y_\tau+\int_0^\tau f_s^idA_s+\int_0^\tau g_s^i ds+K^i_\tau}{\rho^i+\epsilon}-K_t^i.
	\end{multline*}
	By taking $\essup_{\tau\geq t}$ on both sides it becomes
	\begin{multline*}
	Y_t^i+\int_0^tf_s^idA_s+\int_0^tg_s^i ds+\epsilon\essup_{\tau\geq t}\eval^{\rho^i+\epsilon}\left[\int_t^\tau\int_E{U}_s^i(e)\phi_s(de)dA_s\right.\left|\vphantom{\int_t^\tau}\mathcal{F}_t\right]\\=\essup_{\tau\geq t}\econdm{Y_\tau+\int_0^\tau f_s^idA_s+\int_0^\tau g_s^i ds+K^i_\tau}{\rho^i+\epsilon}-K_t^i.
	\end{multline*}
	The term $$\essup_{\tau\geq t}\econdm{Y_\tau+\int_0^\tau f_s^idA_s+K^i_\tau}{\rho^i+\epsilon}$$ is a $\mathbb{P}^{\rho^i+\epsilon}$-supermartingale (it is a Snell envelope) from which we are subtracting an increasing process. The right hand side then a $\mathbb{P}^{\rho^i+\epsilon}$-supermartingale and so is the left hand side.
\end{proof}

With this two lemmas in hand, we can show that $\mathbb{P}$-a.s. $\bar{Y}_t\leq Y_t$.
\begin{prop}
	It holds that $\mathbb{P}$-a.s. 
	$$
	\bar{Y}_t\leq Y_t.
	$$
\end{prop}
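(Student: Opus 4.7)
The plan is to combine the two preceding lemmas with the characterization of the Snell envelope as the smallest càdlàg supermartingale dominating the obstacle. For each $\epsilon<\bar{\epsilon}$, the first lemma identifies
$$
\bar{Y}^i_t+\int_0^tf_s^idA_s+\int_0^tg_s^ids+\epsilon\essup_{\tau\geq t}\eval^{\rho^i+\epsilon}\!\left[\int_t^\tau\!\!\int_E\bar{U}^i_s(e)\phi_s(de)dA_s\,\Big|\,\mathcal{F}_t\right]
$$
as $R^{\rho^i+\epsilon}(\eta^i)$, while the second lemma shows that
$$
Y^i_t+\int_0^tf_s^idA_s+\int_0^tg_s^ids+\epsilon\essup_{\tau\geq t}\eval^{\rho^i+\epsilon}\!\left[\int_t^\tau\!\!\int_E U^i_s(e)\phi_s(de)dA_s\,\Big|\,\mathcal{F}_t\right]
$$
is a $\mathbb{P}^{\rho^i+\epsilon}$-supermartingale dominating $\eta^i$. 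Since $R^{\rho^i+\epsilon}(\eta^i)$ is the minimal such object (and since both quantities include the same $\int_0^t f^i dA+\int_0^t g^i ds$ bias, which cancels), I would immediately deduce
$$
\bar{Y}^i_t\leq Y^i_t+\epsilon\essup_{\tau\geq t}\eval^{\rho^i+\epsilon}\!\left[\int_t^\tau\!\!\int_E U^i_s(e)\phi_s(de)dA_s\,\Big|\,\mathcal{F}_t\right]-\epsilon\essup_{\tau\geq t}\eval^{\rho^i+\epsilon}\!\left[\int_t^\tau\!\!\int_E\bar{U}^i_s(e)\phi_s(de)dA_s\,\Big|\,\mathcal{F}_t\right]
$$
$\mathbb{P}^{\rho^i+\epsilon}$-a.s., hence $\mathbb{P}$-a.s. since $\mathbb{P}^{\rho^i+\epsilon}\sim\mathbb{P}$.

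The remaining task is to let $\epsilon\downarrow 0$ and show the two correction terms vanish. For this I will reuse the estimate already carried out in the proof of Proposition \ref{swit_existence_picard_iteration}: for any $V\in L^{2,\beta}(p)$ and any stopping time $\tau\geq t$, Cauchy--Schwarz under $\mathbb{P}$, the bound \eqref{eq:square_martingale_conditioned} applied to $\rho^i+\epsilon$ (which satisfies \ref{ass:rhosbound} by choice of $\bar{\epsilon}$), and the standard $e^{\beta A_s}$-weighting trick, together yield
$$
\eval^{\rho^i+\epsilon}\!\left[\int_t^\tau\!\!\int_E|V_s(e)|\phi_s(de)dA_s\,\Big|\,\mathcal{F}_t\right]\leq \frac{1}{\beta^{1/2}}\econd{e^{\eta A_T}}^{1/2}\econd{\int_0^T\!\!\int_E e^{\beta A_s}|V_s(e)|^2\phi_s(de)dA_s}^{1/2},
$$
a bound independent of $\tau$ and of $\epsilon\in(0,\bar{\epsilon})$. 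Applying this with $V=U^i$ and $V=\bar{U}^i$ (both in $L^{2,\beta}(p)$) gives an $\epsilon$-independent finite random variable controlling each essential supremum, so multiplying by $\epsilon$ and sending $\epsilon\to 0$ yields $\bar{Y}^i_t\leq Y^i_t$ $\mathbb{P}$-a.s., as desired.

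The only delicate step is the interchange of $\epsilon\cdot\essup$ with the limit $\epsilon\to 0$; the uniform-in-$\tau$, uniform-in-$\epsilon$ bound above makes this routine, but it is what forces us to have introduced the perturbed kernel $\rho^i+\epsilon$ (which induces a probability equivalent to $\mathbb{P}$) rather than working directly under $\mathbb{P}^{\rho^i}$, since $\mathbb{P}^{\rho^i}$ is only absolutely continuous. Combined with the reverse inequality \eqref{eq:no_bar_smaller_than_bar} already proved via comparison, this yields $\bar{Y}^i=Y^i$ and thus identifies $Y^i$ as the solution of the RBSDE with obstacle $\max_{j\in\mathbb{A}_i}(Y^j-C(i,j))$.
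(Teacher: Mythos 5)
Your proposal is correct and follows essentially the same route as the paper: combine the two lemmas with the minimality of the Snell envelope to get the inequality under $\mathbb{P}^{\rho^i+\epsilon}$ (hence under $\mathbb{P}$ by equivalence), then send $\epsilon\to 0$ using the $\tau$- and $\epsilon$-uniform bound on the conditional expectation of $\int\int|V_s(e)|\phi_s(de)dA_s$ derived as in Proposition \ref{swit_existence_picard_iteration}. Your write-up is in fact slightly more careful than the paper's, which merely cites that earlier estimate without spelling out the uniformity in $\tau$.
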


\begin{proof}
	Since the Snell envelope of a process is the smallest super-martingale that dominates the process, by the lemmas above we have that $\mathbb{P}^{\rho^i+\epsilon}$-a.s.
	\begin{multline*}
	\bar{Y}^i_t+\int_0^tf_s^idA_s+\int_0^tg_s^i ds+\epsilon\essup_{\tau\geq t}\eval^{\rho^i+\epsilon}\left[\int_t^\tau\int_E\bar{U}_s^i(e)\phi_s(de)dA_s\right.\left|\vphantom{\int_t^\tau}\mathcal{F}_t\right]\\
	\leq {Y}^i_t+\int_0^tf_s^idA_s+\int_0^tg_s^i ds+\epsilon\essup_{\tau\geq t}\eval^{\rho^i+\epsilon}\left[\int_t^\tau\int_E{U}_s^i(e)\phi_s(de)dA_s\right.\left|\vphantom{\int_t^\tau}\mathcal{F}_t\right].
	\end{multline*}
	
	Since $\mathbb{P}^{\rho^i+\epsilon}\sim\mathbb{P}$, this holds also $\mathbb{P}$-a.s.
	We must now show that
	$$
	\epsilon\essup_{\tau\geq t}\eval^{\rho^i+\epsilon}\left[\int_t^\tau\int_E\bar{U}_s^i(e)\phi_s(de)dA_s\right.\left|\vphantom{\int_t^\tau}\mathcal{F}_t\right]\rightarrow 0
	$$
	for $\epsilon\rightarrow 0$, and that the same holds for the integral of $U^i$. This can be done as in the proof of proposition \ref{swit_existence_picard_iteration}.

	The same applies to the integral of $U^i$.
	This tells us that for all $t$
	$$
	\bar{Y}_t\leq Y_t\quad \mathbb{P}\text{-a.s.}
	$$
	Since both processes are càdlàg, this holds $\mathbb{P}\text{-a.s.}$ for all $t$.
\end{proof}

By combining this lemma and \eqref{eq:no_bar_smaller_than_bar}, we obtain that $Y$ is indistinguishable from $\bar{Y}$. Once we know this is, it means that
\begin{multline*}
\int_t^T\int_EU_s^i(e)(\rho^i_s(de)-1)\phi_s(de)dA_s-\int_t^TU_s^i(e)q^i(dsde)-\int_t^TZ_s^idW_s+K_T^i-K_t^i\\
=\int_t^T\int_E\bar{U}_s^i(e)(\rho^i_s(de)-1)\phi_s(de)dA_s-\int_t^T\bar{U}_s^i(e)q^i(dsde)-\int_t^T\bar{Z}_s^idW_s+\bar{K}_T^i-\bar{K}_t^i.
\end{multline*}
Then, reasoning as in the proof of uniqueness \ref{swit_uniqueness}, it is possible to show that $U=\bar{U}$ since the inaccessible jumps of both sides are the same, $Z=\bar{Z}$ by considering the predictable bracket against $\int_0^t(Z_s-\bar{Z}_s)dW_s$ and thus that $K_t=\bar{K}_t$.

$(Y^i,U^i,Z^i,K^i)_{i\in\mathcal{J}}$ is almost the solution to system \eqref{eq:system}, in the sense that is solves
\begin{equation}
\label{eq:system_leftlimits}
\begin{cases}
Y_t^i=\xi^i+\int_t^Tf_s^idA^i_s+\int_t^Tg_s^i ds+\int_t^T\int_EU_s^i(e)(\rho^i_s(de)-1)\phi_s(de)dA_s\\
\qquad -\int_t^TU_s^i(e)q^i(dsde)-\int_t^TZ_s^idW_s+K_T^i-K_t^i\\
Y_t^i\geq \max\limits_{j\in \mathbb{A}_i}(Y^j_t-C_t(i,j))\\
\int_0^T(Y_{t^-}^i- \max\limits_{j\in \mathbb{A}_i}(Y^j_{t^-}-C_t(i,j)))dK_t^i=0.
\end{cases}
\end{equation}

We only have to show that the $ K^i $ is continuous. This also means that $Y^i$ jumps are totally inaccessible. This is a crucial property for the existence of optimal stopping times. It also means that $Y^i$ are Upper Semi Continuous in Expectation (USCE see \cite{kobylanski2012optimal} )
\begin{prop}
	\label{swit:Kconti}
	The processes $K^i$ are continuous.
\end{prop}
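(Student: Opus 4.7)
The plan is to show that the predictable increasing process $K^i$ has no jumps, by exploiting the Skorohod minimal push condition together with Assumption \ref{ass:continuous_A} and the no-free-cycles hypothesis \eqref{ass:no_free_cycles}.

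First I would fix $i\in\mathcal{J}$ and suppose, for contradiction, that there is a predictable stopping time $\tau$ and a set $B\in\mathcal{F}_{\tau^-}$ with $\prob(B)>0$ on which $\Delta K^i_\tau>0$. Since $K^i$ is predictable and $A$ is continuous (Assumption \ref{ass:continuous_A}), all the drift terms $\int f^i\,dA$, $\int g^i\,ds$ and $\int U^i(\rho^i-1)\phi\,dA$ in \eqref{eq:system_leftlimits} are continuous; the Brownian integral is continuous; and the stochastic integral $\int U^i\,q(ds\,de)$ jumps only on the (totally inaccessible) jumps of $p$ — these are totally inaccessible because quasi-left-continuity of the counting process $N$ is equivalent to continuity of $A$ by Corollary 5.28 of \cite{sheng1998semimartingale}. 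Therefore the only predictable jumps of $Y^i$ come from $-K^i$, and on $B$ one has $\Delta Y^i_\tau=-\Delta K^i_\tau<0$. The Skorohod condition in \eqref{eq:system_leftlimits} forces $Y^i_{\tau^-}=\max_{j\in\mathbb{A}_i}(Y^j_{\tau^-}-C_\tau(i,j))$ on $B$.

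Second, using continuity of the cost processes, on $B$
\begin{equation*}
\max_{j\in\mathbb{A}_i}(Y^j_\tau-C_\tau(i,j))\leq Y^i_\tau<Y^i_{\tau^-}=\max_{j\in\mathbb{A}_i}(Y^j_{\tau^-}-C_\tau(i,j)),
\end{equation*}
so a measurable selection yields an index $j_1\in\mathbb{A}_i$ (depending on $\omega$) with $Y^{j_1}_\tau<Y^{j_1}_{\tau^-}$ and $Y^i_{\tau^-}=Y^{j_1}_{\tau^-}-C_\tau(i,j_1)$. Applying the first step to $Y^{j_1}$ gives $\Delta K^{j_1}_\tau>0$ on a subset of $B$ of positive probability and $Y^{j_1}_{\tau^-}=\max_{k\in\mathbb{A}_{j_1}}(Y^k_{\tau^-}-C_\tau(j_1,k))$. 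Iterating, I construct a sequence of modes $i=j_0,j_1,j_2,\ldots$ with $j_{k+1}\in\mathbb{A}_{j_k}$, each $\Delta K^{j_k}_\tau>0$, and
\begin{equation*}
Y^i_{\tau^-}=Y^{j_n}_{\tau^-}-\sum_{k=0}^{n-1}C_\tau(j_k,j_{k+1}).
\end{equation*}

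Third, since $\mathcal{J}$ is finite, the sequence must revisit a previous mode, yielding a cycle $j_{k_0},j_{k_0+1},\ldots,j_{k_1}=j_{k_0}$ along which, by telescoping the above identities, $\sum_{k=k_0}^{k_1-1}C_\tau(j_k,j_{k+1})=0$. But the no-free-cycles assumption \eqref{ass:no_free_cycles} implies the classical no-free-loop property by an easy induction on cycle length (replace any consecutive pair $C_\tau(j_k,j_{k+1})+C_\tau(j_{k+1},j_{k+2})$ by a strictly smaller expression bounded below by $C_\tau(j_k,j_{k+2})$, shortening the cycle while preserving positivity, until the base cases $p=2,3$ which are clearly positive). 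This contradicts the vanishing of the cycle sum, so no such $\tau$ and $B$ exist and $K^i$ has no predictable jumps. Being predictable, it is therefore continuous.

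The main obstacle is the iteration in the second step: one must make sure the selection $j_1,j_2,\ldots$ can be carried out measurably on the conditioning set and that, each time we move to a new mode, the equality $Y^{j_k}_{\tau^-}=Y^{j_{k+1}}_{\tau^-}-C_\tau(j_k,j_{k+1})$ survives to combine into the telescoping identity used to close the cycle. Once the chain of equalities is established, the contradiction with \eqref{ass:no_free_cycles} via the no-free-loop property is then clean.
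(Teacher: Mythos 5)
Your proof is correct and follows essentially the same route as the paper: identify a hypothetical predictable jump time of $K^i$, use quasi-left-continuity of the martingale parts to deduce $\Delta Y^i_\tau=-\Delta K^i_\tau<0$, invoke the Skorohod condition to find a chain of modes each satisfying $Y^{j_k}_{\tau^-}=Y^{j_{k+1}}_{\tau^-}-C_\tau(j_k,j_{k+1})$ with $\Delta K^{j_{k+1}}_\tau>0$, close the chain into a cycle by finiteness of $\mathcal{J}$, and contradict the no-free-loop property implied by \eqref{ass:no_free_cycles}. The only difference is that you are slightly more careful about the measurable selection of indices and the positive-probability set, which the paper leaves implicit.
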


\begin{proof}
	Since $(K^i)_{i\in\mathcal{J}}$ are predictable process, their jump times are predictable stopping times. Also all jumps are non-negative since $(K^i)_{i\in\mathcal{J}}$ are increasing. Suppose then there exists $j_1$ and $\tau$ such that $\Delta K^{j_1}_\tau>0$, with $\tau$ a predictable stopping time. Since the martingale part has only totally inaccessible jumps, it holds that
	$$
	\Delta Y_\tau^{j_1}=-\Delta K^{j_1}_\tau<0.
	$$
	Thanks to the Skorohod condition in \ref{eq:system_leftlimits}, it holds that
	$$
	Y_{\tau^-}^{j_1}=\max\limits_{k\in\mathbb{A}_{j_1}}(Y_{\tau^-}^k-C_t(j_1,k)).
	$$
	This means that for some index $j_2$,
	$$
	Y_{\tau^-}^{j_2}-C_\tau(j_1,j_2)=Y_{\tau^-}^{j_1}\geq Y_\tau^{j_1}\geq Y_{\tau}^{j_2}-C_\tau(j_1,j_2)
	$$
	where the first inequality is because $\Delta Y^{j_1}_\tau<0$. We deduce $\Delta Y^{j_2}_\tau<0$ and $\Delta K_\tau^{j_2}>0$. Then again
	$$
	Y_{\tau^-}^{j_2}=\max\limits_{k\in\mathbb{A}_{j_2}}(Y_{\tau^-}^k-C_t(j_2,k)),
	$$
	and there is $j_3$ such that 
	$$
	Y_{\tau^-}^{j_3}-C_\tau(j_2,j_3)=Y_{\tau^-}^{j_2}\geq Y_\tau^{j_2}\geq Y_{\tau}^{j_3}-C_\tau(j_2,j_3)
	$$
	and $-\Delta Y_\tau^{j_3}=\Delta K_\tau^{j_3}>0$. Since the set of indexes is finite, by iterating this procedure it is possible to find a finite sequence of numbers $j_1,\dots,j_p=j_1$ such that
	$$
	Y_{\tau^-}^{j_1}=Y_{\tau^-}^{j_2}-C_\tau(j_1,j_2),\;Y_{\tau^-}^{j_2}=Y_{\tau^-}^{j_3}-C_\tau(j_2,j_3),\dots,Y_{\tau^-}^{j_{p-1}}=Y_{\tau^-}^{j_p}-C_\tau(j_{p-1},j_p).
	$$
	This last set of equalities implies
	\begin{equation}
	\label{eq:costs_sum_zero}
	\sum_{k}^{p-1}C_\tau(j_k,j_{k+1})=0.
	\end{equation}
	Using condition \eqref{ass:no_free_cycles}, we can write
	\begin{align*}
	C_\tau(j_1,j_2)+C_\tau(j_2,j_3)&>C_\tau(j_1,j_3)\\
	C_\tau(j_1,j_2)+C_\tau(j_2,j_3)+C_\tau(j_3,j_4)&>C_\tau(j_1,j_3)+C_\tau(j_3,j_4)>C_\tau(j_1,j_4)\\
	&\vdots\\
	\sum_{k=1}^{p-1}C_\tau(j_{k},j_{k+1})&>C_\tau(j_1,j_p)=C_\tau(j_1,j_1)=0
	\end{align*}
	which contradicts \eqref{eq:costs_sum_zero}. Thus $K^{j_1}_\tau=0$ and $K^i$ are continuous.
\end{proof}
Since $K^i$ are continuous, there is no need for limits in the Skorohod condition and we have a solution for \eqref{eq:system}. Note that all the $Y^i$ are USCE and have only inaccessible jumps.

\section{Verification theorem}
\label{sec_swit_verification}
Now that we have established existence of a solution to the system, we can use it to represent the value function. The idea is the following. We ``glue together" the solutions to the system in accordance with strategies: given a strategy in $\mathcal{A}_t^i$, we start with $Y_t^i$ and consider its dynamic between $t$ and the first switching time. Then we switch to another $Y_t^j$ according to the strategy. This way we obtain a ``switched process" that contains the rewards minus the cost, plus a $\prob$-martingale part and the integral of the $U$ parts of the equations against the $\rho$. This last piece will be altogether a martingale under the probability induced by the strategy, thus by taking expected value under that measure we obtain the gain for this strategy.

First, define 
\begin{defi}
	Given a strategy $a$ in $ \mathcal{A}_t^i $, the cumulated switching cost $D_s^a$ is defined as
	\begin{equation}
	D_s^\mathbf{a}=\sum_{n\geq 1}C_{\theta_n}(\alpha_{n-1},\alpha_n)\indb{\theta_n\leq t}\qquad D_T^\mathbf{a}=\lim\limits_{s\rightarrow T}D_s^\mathbf{a}.
	\end{equation}
\end{defi}

$D_s^a$ is a càdlàg adapted increasing process.
The value function is rewritten as 

\begin{equation}
\label{eq:value_func_with_increasing}
v(t,i)=\essup_{\mathbf{a}\in\mathcal{A}_t^i}J(t,i,\mathbf{a})=\eval^{\mathbf{a}}\left[\left.\xi^{a_T}+\int_t^Tf_s^{a_s}dA_s+\int_t^T g_s^{a_s}ds-D_T^\mathbf{a}\right|\mathcal{F}_t\right].
\end{equation}

We have the main result
\begin{teo}
	\label{swit:representation_property}
	Let $(Y^i,U^i,Z^i,K^i)_{i\in\mathcal{J}}$ be the solution to the system \eqref{eq:system}. Then
	\begin{equation}
	\label{eq:representation}
	Y_t^i=\essup\limits_{\mathbf{a}\in\mathcal{A}_t^i}J(t,i,\mathbf{a})=v(t,i)\;\prob\text{-a.s.}
	\end{equation}
	Moreover, the strategy $\mathbf{a}^*=(\theta^*_n,\alpha^*_n)$ defined as $(\theta_0^*,\alpha_0^*)=(t,i)$ and
	\begin{gather}
	\label{eq:optimal_times}\theta^*_n=\inf\left\lbrace s\geq\theta_{n-1}^* : Y^{\alpha^*_{n-1}}_s=\max\limits_{j\in\mathbb{A}_{\alpha^*_{n-1}}}(Y^j_s-C_s(\alpha^*_{n-1},j))\right\rbrace\\
	\label{eq:optimal_choices}\alpha^*_n=\arg\max\limits_{j\in\mathbb{A}_{\alpha^*_{n-1}}}\left(Y^k_{\theta_n^*}-C_{\theta^*_n}(\alpha^*_{n-1},j)\right)
	\end{gather}
	is an optimal strategy.
\end{teo}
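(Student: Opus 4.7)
The plan is to prove the two inequalities $Y_t^i\ge J(t,i,\mathbf{a})$ for every $\mathbf{a}\in\mathcal{A}_t^i$ and $Y_t^i\le J(t,i,\mathbf{a}^*)$ by constructing and analysing the \emph{switched process} associated to a strategy. Given $\mathbf{a}=(\theta_n,\alpha_n)$, define between switching times $\tilde Y^{\mathbf{a}}_s:=Y^{a_s}_s$ and analogously $\tilde U^{\mathbf{a}},\tilde Z^{\mathbf{a}},\tilde K^{\mathbf{a}}$ (replacing the mode index $i$ by $a_s$). On $[\theta_{n-1},\theta_n)$ the process $\tilde Y^{\mathbf{a}}$ satisfies the $\alpha_{n-1}$-th BSDE of the system, and at each time $\theta_n$ it has a jump $Y^{\alpha_n}_{\theta_n}-Y^{\alpha_{n-1}}_{\theta_n}$. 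Summing these pieces between $t$ and a finite horizon $\theta_N$ produces
\begin{align*}
Y_t^i &= Y^{a_{\theta_N}}_{\theta_N} + \int_t^{\theta_N}\! f^{a_s}_s\,dA_s + \int_t^{\theta_N}\! g^{a_s}_s\,ds + \int_t^{\theta_N}\!\!\int_E \tilde U^{\mathbf{a}}_s(e)(\rho^{a_s}_s(e)-1)\phi_s(de)\,dA_s \\
&\quad - \int_t^{\theta_N}\!\!\int_E \tilde U^{\mathbf{a}}_s(e)\,q(ds\,de) - \int_t^{\theta_N}\! \tilde Z^{\mathbf{a}}_s\,dW_s + \tilde K^{\mathbf{a}}_{\theta_N}-\tilde K^{\mathbf{a}}_t - \sum_{n:\theta_n\le\theta_N}\!\!\bigl(Y^{\alpha_n}_{\theta_n}-Y^{\alpha_{n-1}}_{\theta_n}\bigr).
\end{align*}

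For the upper bound, rewriting the point process integrals under the compensator $\rho^{\mathbf{a}}\phi\,dA$ makes the combination of the $(\rho^{a_s}-1)$ term and the $q$-integral a $\prob^{\mathbf{a}}$-martingale, and the Brownian integral is a $\prob^{\mathbf{a}}$-martingale by Proposition \ref{prop:still_mg_and_square}. The obstacle condition yields $Y^{\alpha_n}_{\theta_n}-Y^{\alpha_{n-1}}_{\theta_n}\le -C_{\theta_n}(\alpha_{n-1},\alpha_n)$, while $\tilde K^{\mathbf{a}}_{\theta_N}-\tilde K^{\mathbf{a}}_t\ge 0$. Taking $\prob^{\mathbf{a}}$-conditional expectation given $\mathcal{F}_t$, using admissibility ($\theta_N\uparrow T$) and dominated convergence based on the $L^{2,\beta}$ bounds and the integrability in assumption \ref{ass:data_swit}, gives
\[
Y_t^i \ge \eval^{\mathbf{a}}\!\left[\left.\xi^{a_T}+\int_t^T f^{a_s}_s\,dA_s+\int_t^T g^{a_s}_s\,ds - D^{\mathbf{a}}_T \right| \mathcal{F}_t\right] = J(t,i,\mathbf{a}).
\]

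For the attainment along $\mathbf{a}^*$, the key observation is that by definition of $\theta^*_n$ the process $Y^{\alpha^*_{n-1}}$ stays strictly above its obstacle on $(\theta^*_{n-1},\theta^*_n)$, so the Skorohod condition and the continuity of the $K^i$ established in Proposition \ref{swit:Kconti} force $\tilde K^{\mathbf{a}^*}_{\theta^*_N}=\tilde K^{\mathbf{a}^*}_t$. Furthermore the argmax choice of $\alpha^*_n$ gives $Y^{\alpha^*_n}_{\theta^*_n}-Y^{\alpha^*_{n-1}}_{\theta^*_n}=-C_{\theta^*_n}(\alpha^*_{n-1},\alpha^*_n)$, so the jump sum reduces exactly to $-D^{\mathbf{a}^*}_{\theta^*_N}$. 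All inequalities in the previous step become equalities, yielding $Y_t^i = J(t,i,\mathbf{a}^*)$.

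The main obstacle is proving that $\mathbf{a}^*$ is admissible, i.e.\ $\prob(\theta^*_n<T\ \forall n)=0$. This is where the strengthened no-free-loop condition \eqref{ass:no_free_cycles} is essential: on the event $\{\theta^*_n\uparrow\tau<T\}$, the finite set $\mathcal{J}$ forces the sequence $(\alpha^*_n)$ to revisit some indices infinitely often, producing a cycle $j_1,\dots,j_p=j_1$ satisfying $Y^{j_k}_{\tau^-}=Y^{j_{k+1}}_{\tau^-}-C_\tau(j_k,j_{k+1})$, hence $\sum_k C_\tau(j_k,j_{k+1})=0$, which contradicts \eqref{ass:no_free_cycles} by the telescoping argument already used in the proof of Proposition \ref{swit:Kconti}. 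The secondary technical point is justifying the exchange of limit and $\prob^{\mathbf{a}}$-expectation as $N\to\infty$: the integrability of the reward terms comes directly from \ref{ass:data_swit}, the martingale property of the stochastic integrals up to $T$ from Proposition \ref{prop:still_mg_and_square}, and the uniform control of $Y^{a_{\theta_N}}_{\theta_N}$ from the bound \eqref{eq:sup_bounded} together with $\tilde Y^{\mathbf{a}}_{\theta_N}\to\xi^{a_T}$ using right-continuity of the $Y^i$.
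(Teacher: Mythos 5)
Your construction of the switched process, the use of the obstacle and Skorohod conditions, the identification of the jump sum with $-D_T^{\mathbf a}$, and the no-free-loop admissibility argument for $\mathbf{a}^*$ all match Step~1 of the paper's proof. However, there is a genuine gap at the decisive point: after taking $\prob^{\mathbf{a}}$-conditional expectation you obtain $Y_t^i\ge J(t,i,\mathbf{a})$ only $\prob^{\mathbf{a}}$-almost surely, whereas the theorem asserts the representation $\prob$-almost surely. Under assumption \ref{ass:rhosbound} the kernels $\rho^i$ are allowed to vanish, so $\prob^{\mathbf{a}}\ll\prob$ but $\prob^{\mathbf{a}}\not\sim\prob$, and an inequality valid $\prob^{\mathbf{a}}$-a.s.\ may fail on a set of positive $\prob$-measure; the paper explicitly warns about exactly this in the remark following the definition of $v(t,i)$. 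As written, your argument proves the theorem only under the additional hypothesis $\rho^i\ge c>0$, which is precisely the paper's Step~1 and not the general statement.

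The missing ingredient is the paper's Step~2. Starting from the pathwise inequality \eqref{swit_Y_greater_than_a}, one adds $\epsilon\int_t^T\int_E U^{\mathbf{a}}_s(e)\phi_s(de)dA_s$ to both sides so that the point-process part becomes a martingale under the perturbed probability $\prob^{\mathbf{a}+\epsilon}$ induced by the kernel $\rho^{\mathbf{a}}+\epsilon$, which \emph{is} equivalent to $\prob$ (one needs $3+(M+\bar\epsilon)^4<\eta$ to stay within assumption \ref{ass:rhosbound}). Taking $\prob^{\mathbf{a}+\epsilon}$-expectation then yields an inequality valid $\prob$-a.s., and one sends $\epsilon\to 0$, showing (i) that $\epsilon\,\eval^{\mathbf{a}+\epsilon}[\int_t^T\int_E U^{\mathbf{a}}_s(e)\phi_s(de)dA_s\mid\mathcal{F}_t]\to 0$ (as in the proof of proposition \ref{swit_existence_picard_iteration}) and (ii) that $\eval^{\mathbf{a}+\epsilon}[X\mid\mathcal{F}_t]\to\eval^{\mathbf{a}}[X\mid\mathcal{F}_t]$ for the reward $X$, the latter by dominated convergence using the explicit bound on $(L_T^{\mathbf{a}+\epsilon}/L_t^{\mathbf{a}+\epsilon})^2$ coming from proposition \ref{prop:still_mg_and_square}. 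The same perturbation must be applied to the equality chain for $\mathbf{a}^*$. Without this step the claimed $\prob$-a.s.\ identity $Y_t^i=v(t,i)$ is not established; the remainder of your proposal (optimality of $\mathbf{a}^*$, admissibility via total inaccessibility of the jumps of the $Y^i$) is in line with the paper.
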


\begin{proof}
	\textit{Step 1}. We first show it in the case where the $\rho^i$ also satisfy $0<c\leq \rho_t^i(e)$ for some constant $c$. 
	Consider $(Y^i,U^i,Z^i,K^i)_{i\in\mathcal{J}}$ be the solution to the system \eqref{eq:system}. Let $\mathbf{a}\in\mathcal{A}_t^i$, and for this $\mathbf{a}$ define
	$$
	\hat{K}_T^\mathbf{a}=K_{\theta_1}^i-K_{t}^i+\sum_{n\geq1}^{}\left(K^{\alpha_n}_{\theta_{n+1}}-K^{\alpha_n}_{\theta_n}\right)
	$$
	and
	$$
	U_r^\mathbf{a}=\sum_{n\geq 0}U_r^{\alpha_n}\indb{\theta_n<r \leq \theta_{n+1}} \quad Z_r^\mathbf{a}=\sum_{n\geq 0}Z_r^{\alpha_n}\indb{\theta_n<r \leq \theta_{n+1}}
	$$
	Now rewrite the equation for $Y^i$ between $t$ and $\theta_1$:
	\begin{align*}
	Y_t^i&=Y_{\theta_1}^i+\int_t^{\theta_1}f_s^idA_s+\int_t^{\theta_1}g_s^ids+\int_t^{\theta_1}\int_EU_s^i(e)(\rho_s^i-1)\phi_s(de)dA_s\\
	&-\int_t^{\theta_1}\int_EU_s^i(e)q(dsde)-\int_t^{\theta_1}Z_s^idW_s+K_{\theta_1}^i-K_t^i\\
	&\geq \left(Y_{\theta_1}^{\alpha_1}-C_{\theta_1}(i,\alpha_1)\right)\indb{\theta_1<T}+\xi^{\alpha_0}\indb{\theta_1=T}+\int_t^{\theta_1}f_s^{a_s}dA_s+\int_t^{\theta_1}g_s^{a_s}ds\\
	&+\int_t^{\theta_1}\int_EU_s^{\mathbf{a}}(e)(\rho_s^{\mathbf{a}}-1)\phi_s(de)dA_s
	-\int_t^{\theta_1}\int_EU_s^{\mathbf{a}}(e)q(dsde)\\
	&-\int_t^{\theta_1}Z_s^{\mathbf{a}}dW_s+K_{\theta_1}^i-K_t^i\\
	&=Y_{\theta_2}^{\alpha_1}\indb{\theta_1<T}+\int_t^{\theta_2}f_s^{a_s}dA_s
	+\int_t^{\theta_2}g_s^{a_s}ds\\ &+\int_t^{\theta_2}\int_EU_s^\mathbf{a}(e)(\rho_s^{\mathbf{a}}-1)\phi_s(de)dA_s-\int_t^{\theta_2}\int_EU_s^{\mathbf{a}}(e)q(dsde)+\\
	&+\int_t^{\theta_2}Z_s^\mathbf{a}dW_s+
	(K_{\theta_1}^i-K_t^i)+(K_{\theta_2}^{\alpha_1}-K_{\theta_1}^{\alpha_1})
	-C_{\theta_1}(i,\alpha_1)\indb{\theta_1<T}+\xi^{\alpha_0}\indb{\theta_1=T},
	\end{align*}
	where we used first the barrier condition and then the equation for $Y_{\theta_1}^{\alpha_1}$ between $\theta_1$ and $\theta_2$. This process can be repeated until we obtain
	\begin{multline*}
	Y_t^i\geq \xi^{a_T}+\int_t^Tf_s^{a_s}dA_s+\int_t^Tg_s^{a_s}ds+\int_t^T\int_EU_s^{\mathbf{a}}(\rho_s^\mathbf{a}-1)\phi_s(de)dA_s\\
	-\int_t^T\int_EU_s^{\mathbf{a}}(e)q(dsde)-\int_t^TZ_s^\mathbf{a}dW_s+\hat{K}_T^a-D_T^a,
	\end{multline*}
	which ends thanks to the fact that $a$ is an admissible strategy and thus $\mathbb{P}(\theta_n<T \;\forall n\geq 0)=0$.
	This can be rewritten as (forgetting about the non-negative $\hat{K}_T^a$)
	\begin{multline}
	\label{swit_Y_greater_than_a}
	Y_t^i\geq \xi^{a_T}+\int_t^Tf_s^{a_s}dA_s+\int_t^Tg_s^{a_s}ds-\int_t^TZ_s^\mathbf{a}dW_s\\
	-\int_t^T\int_EU_s^{\mathbf{a}}(e)(p(dsde)-\rho_s^\mathbf{a}(e)\phi_s(de)dA_s)-D_T^a.
	\end{multline}
	Now, by taking $\mathbb{E}^{\mathbf{a}}$ expectation, we have that
	\begin{multline*}
	Y_t^i\geq \econdm{\xi^{a_T}+\int_t^Tf_s^{a_s}dA_s+\int_t^Tg_s^{a_s}ds-D_T^a}{\mathbf{a}}\\-\econdm{\int_t^T\int_EU_s^{\mathbf{a}}(e)(p(dsde)-\rho_s^\mathbf{a}(e)\phi_s(de)dA_s)+\int_t^TZ_s^\mathbf{a}dW_s}{\mathbf{a}},
	\end{multline*}
	now the term $\int_t^T\int_EU_s^{\mathbf{a}}(e)(p(dsde)-\rho_s^\mathbf{a}(e)\phi_s(de)dA_s)-\int_t^TZ_s^\mathbf{a}dW_s$ is a $\mathbb{P}^\mathbf{a}$-martingale, as stated by proposition \ref{prop:still_mg_and_square}. Thus we have:
\begin{equation}
\label{swit_Y_greater_than_a_expected_value}
Y_t^i\geq \econdm{\xi^{a_T}+\int_t^Tf_s^{a_s}dA_s-D_T^a}{\mathbf{a}} \quad \prob^{\mathbf{a}}\text{-a.s.}
\end{equation}
	To prove that $Y^i$ is indeed the $\essup$, consider the strategy $\mathbf{a}^*$ defined above. We first prove that it is indeed an admissible strategy, by showing that $\prob(\theta^*_n<T\; \forall n\geq 0)=0$.
	Assume, as done in \cite{hamadene2015systems}, that this does not hold and $\prob(\theta^*_n<T\; \forall n\geq 0)>0$. By the definition of $\mathbf{a^*}$ this means that
	
	\begin{equation*}
	\prob\left(Y_{\theta^*_{n+1}}^{\alpha_n^*}=Y_{\theta^*_{n+1}}^{\alpha_{n+1}^*}-C_{\theta^*_{n+1}}(\alpha_{n}^*,\alpha_{n+1}^*),\; \alpha_{n+1}^*\in\mathbb{A}_{a^*_n}, \;\forall n\geq 0\right)>0.
	\end{equation*}
	Since $\mathcal{J}$ is finite, there exists a loop $i_0,i_1,\dots,i_k,i_0$ of elements of $\mathcal{J}$ and a subsequence $n_q(\omega)_{q\geq 0}$ such that
	
	\begin{equation}
	\label{swit_prob_to_take_lim}
	\prob\left(Y_{\theta^*_{n_{q+l}}}^{i_l}=Y_{\theta^*_{n_{q+l}}}^{i_{l+1}}-C_{\theta^*_{n_{q+l}}}(i_l,i_l+1),\; l=0,\dots,k ;\;i_{k+1}=i_0 \;\forall q\geq 0\right)>0.
	\end{equation}
	Consider now $\theta^*=\lim\theta_n^*$ and the set $\Theta=\lbrace\theta^*_n<\theta^*, \;\forall n\geq 0\rbrace$.
	Thanks to property \ref{ass:no_free_cycles} we know that
	$$\prob(\lbrace\theta^*<T\rbrace\cap\Theta^c)=0.$$
	Indeed on $\Theta^c$ we know that for some $\bar{n}$ we have $\theta_n^*=\theta^*$ for all $n\geq\bar{n}$. This would mean that we keep switching over a cycle with cost zero at $\theta^*$ (see proposition \ref{swit:Kconti} or remark \ref{swit_another_no_free_loop}). Indeed we would have
	$$
	Y_{\theta^*}^{j_0}=Y_{\theta^*}^{j_1}-C_{\theta^*}(j_0,j_1),\;Y_{\theta^*}^{j_1}=Y_{\theta^*}^{j_2}-C_{\theta^*}(j_1,j_2),\;\dots,\; Y_{\theta^*}^{j_k}=Y_{\theta^*}^{j_0}-C_{\theta^*}(j_k,j_0),
	$$
	and this again contradicts the no free loop property.
	Now this tells us that $\theta^*$ is not a totally inaccessible stopping time (see \cite[Chapter 4, 79]{dellacherie75probpotent} or \cite[Chapter 3,3]{sheng1998semimartingale}),  since it must hold that $\prob(\Theta)>0$.
	Since the jumps of the $Y^i$ are totally inaccessible, this means that there is no jump at $\theta^*$ and thus we can take the limit in \eqref{swit_prob_to_take_lim} obtaining
	
	\begin{equation*}
	\prob\left(Y_{\theta^*}^{i_l}=Y_{\theta^*}^{i_{l+1}}-C_{\theta^*}(i_l,i_l+1),\; l=0,\dots,k ;\;i_{k+1}=i_0 \;\forall q\geq 0\right)>0.
	\end{equation*}
	This can be rewritten as
	$$
	\prob\left(\sum_{l=0}^{k}C_{\theta^*}(i_l,i_{l+1})=0;\quad i_{k+1}=i_0\right)>0
	$$
	which contradicts the non free loop property (see again remark \ref{swit_another_no_free_loop}).
	
	Now that we know that $a^*\in\mathcal{A}_t^i$ , we write as before,
	\begin{align*}
	Y_t^i&=Y_{\theta^*_1}^i+\int_t^{\theta^*_1}f_s^{a^*_s}dA_s+\int_t^{\theta^*_1}g_s^{a^*_s}ds+\int_t^{\theta_1^*}\int_EU_s^{\mathbf{a}^*}(e)(\rho_s^{\mathbf{a}^*}-1)\phi_s(de)dA_s\\
	&-\int_t^{\theta^*_1}\int_EU_s^{\mathbf{a}^*}(e)q(dsde)-\int_t^{\theta_1}Z_s^{\mathbf{a^*}}dW_s\\
	&= \left(Y_{\theta_1^*}^{\alpha_1^*}-C_{\theta_1^*}(i,\alpha_1^*)\right)\indb{\theta_1^*<T}+\xi^{\alpha_0^*}\indb{\theta_1^*=T}+\int_t^{\theta_1^*}f_s^{a^*_s}dA_s+\int_t^{\theta^*_1}g_s^{a^*_s}ds\\&+\int_t^{\theta_1^*}\int_EU_s^{\mathbf{a}^*}(e)(\rho_s^{\mathbf{a}^*}-1)\phi_s(de)dA_s
	-\int_t^{\theta_1^*}\int_EU_s^{\mathbf{a}^*}(e)q(dsde)-\int_t^{\theta_1}Z_s^{\mathbf{a^*}}dW_s,
	\end{align*}
	but with the difference that $K^i_{\theta^*_1}-K_t^i=0$ thanks to the Skorohod condition in \eqref{eq:system} and to the way $a^*$ is defined. Repeating this as before, but with equalities, we obtain that 
	\begin{multline*}
	Y_t^i=\xi^{a^*_T}+\int_t^{T}f_s^{a^*_s}dA_s+\int_t^{T}g_s^{a^*_s}ds-D_T^{\mathbf{a}^*}\\-\int_t^T\int_EU_s^{\mathbf{a}^*}(e)(p(dsde)-\rho_s^{\mathbf{a}^*}\phi_s(de)dA_s)-\int_t^TZ_s^{\mathbf{a^*}}dW_s.
	\end{multline*}
	By taking $\prob^{\mathbf{a^*}}$-expected value conditional on $\mathcal{F}_t$ we obtain that
\begin{equation}
\label{swit_Y_star_equal_a_star_expected}
Y_t^i=J(t,i,a^*)\quad \prob^{\mathbf{a^*}}\text{-a.s.}
\end{equation}
Since we assumed that the $\rho^i$ are bounded from below by a constant $c>0$, then the probabilities $\prob^\mathbf{a}$ introduced are equivalent to $\prob$ and relations \eqref{swit_Y_greater_than_a_expected_value} and \eqref{swit_Y_star_equal_a_star_expected} hold also $\prob$-a.s. In that case we have that $Y_t^i=v(t,i)$ $\prob$-a.s. by combining the two relations.

\textit{Step 2.} The general case where the $\rho^i$ can touch zero is more complicated, but both in the cases of a generic $\mathbf{a}$ and $\mathbf{a^*}$ we can proceed in the same way. Take a step backwards and consider the relation \eqref{swit_Y_greater_than_a}

\begin{multline*}
Y_t^i\geq \xi^{a_T}+\int_t^Tf_s^{a_s}dA_s+\int_t^Tg_s^{a_s}ds-\int_t^TZ_s^\mathbf{a}dW_s\\
-\int_t^T\int_EU_s^{\mathbf{a}}(e)(p(dsde)-\rho_s^\mathbf{a}(e)\phi_s(de)dA_s)-D_T^a.
\end{multline*}
Consider now $\bar{\epsilon}$ such that $3+(M+\epsilon)^4<\eta$. For all $0<\epsilon<\bar{\epsilon}$ we add $\epsilon\int_t^T\int_E U_s^{\mathbf{a}}\phi_s(de)dA_s$ to both sides of the previous relation, obtaining

\begin{multline*}
Y_t^i+\epsilon\int_t^T\int_E U_s^{\mathbf{a}}\phi_s(de)dA_s \geq \xi^{a_T}+\int_t^Tf_s^{a_s}dA_s+\int_t^Tg_s^{a_s}ds-\int_t^TZ_s^\mathbf{a}dW_s\\
-\int_t^T\int_EU_s^{\mathbf{a}}(e)(p(dsde)-(\rho_s^\mathbf{a}(e)+\epsilon)\phi_s(de)dA_s)-D_T^a.
\end{multline*}
We denote by $\prob^{\mathbf{a}+\epsilon}$ the probability induced by the kernel $\rho^\mathbf{a}+\epsilon$, and we have $\prob^{\mathbf{a}+\epsilon}\sim\prob$. Then by taking expectation under $\prob^{\mathbf{a}+\epsilon}$, we obtain that $\prob$-a.s.

\begin{equation}
Y_t^i+\epsilon\econdm{\int_t^T\int_E U_s^{\mathbf{a}}\phi_s(de)dA_s}{\mathbf{a}+\epsilon}\geq \econdm{\xi^{a_T}+\int_t^Tf_s^{a_s}dA_s-D_T^a}{\mathbf{a}+\epsilon} 
\end{equation}
We have already seen that (see proposition \ref{swit_existence_picard_iteration})
\begin{equation}
\label{swit_representation_epsilon_part}
\epsilon\econdm{\int_t^T\int_E U_s^{\mathbf{a}}\phi_s(de)dA_s}{\mathbf{a}+\epsilon}\rightarrow 0 \text{ for } \epsilon\rightarrow 0.
\end{equation}
If we show that $\text{ for } \epsilon\rightarrow 0$
$$
\econdm{\xi^{a_T}+\int_t^Tf_s^{a_s}dA_s-D_T^a}{\mathbf{a}+\epsilon}\rightarrow \econdm{\xi^{a_T}+\int_t^Tf_s^{a_s}dA_s-D_T^a}{\mathbf{a}},
$$
then we obtain that $\prob$-a.s. for all $\mathbf{a}\in\mathcal{A}_t^i$
\begin{equation}
\label{swit_Y_greater_reward_no_low_bound}
Y_t^i\geq J(t,i,\mathbf{a}).
\end{equation}
Denote by $X$ the square integrable random variable $X=\xi^{a_T}+\int_t^Tf_s^{a_s}dA_s-D_T^a$. And consider the difference between the two terms
\begin{align*}
\left|\econdm{X}{\mathbf{a}+\epsilon}-\econdm{X}{\mathbf{a}}\right|&=\left|\econd{\frac{L_T^{\mathbf{a}+\epsilon}}{L_t^{\mathbf{a}+\epsilon}}X}-\econd{\frac{L_T^{\mathbf{a}}}{L_t^{\mathbf{a}}}X}\right|\\
&=\left|\econd{\left(\frac{L_T^{\mathbf{a}+\epsilon}}{L_t^{\mathbf{a}+\epsilon}}-\frac{L_T^{\mathbf{a}}}{L_t^{\mathbf{a}}}\right)X}\right|\\
&\leq \econd{\left(\frac{L_T^{\mathbf{a}+\epsilon}}{L_t^{\mathbf{a}+\epsilon}}-\frac{L_T^{\mathbf{a}}}{L_t^{\mathbf{a}}}\right)^2}^{1/2}\econd{X^2}^{1/2}.
\end{align*}
It is clear that
$$
\left(\frac{L_T^{\mathbf{a}+\epsilon}}{L_t^{\mathbf{a}+\epsilon}}-\frac{L_T^{\mathbf{a}}}{L_t^{\mathbf{a}}}\right)^2\rightarrow 0\quad \prob\text{-a.s.}
$$
so we want to use the dominated convergence theorem to prove that the conditional expected value converges to zero. It holds that
$$\left(\frac{L_T^{\mathbf{a}+\epsilon}}{L_t^{\mathbf{a}+\epsilon}}-\frac{L_T^{\mathbf{a}}}{L_t^{\mathbf{a}}}\right)^2\leq C\left(\frac{L_T^{\mathbf{a}+\epsilon}}{L_t^{\mathbf{a}+\epsilon}}\right)^2+C\left(\frac{L_T^{\mathbf{a}}}{L_t^{\mathbf{a}}}\right)^2  $$
We already know, thanks to proposition \ref{prop:still_mg_and_square}, that the second term is bounded by
$$
\sqrt{\frac{\mathcal{E}((\rho^\mathbf{a})^4)_T}{\mathcal{E}((\rho^\mathbf{a})^4)_t}}e^{\frac{\eta}{2}A_T}
$$
which is integrable
$$
\evals{\sqrt{\frac{\mathcal{E}((\rho^\mathbf{a})^4)_T}{\mathcal{E}((\rho^\mathbf{a})^4)_t}}e^{\frac{\eta}{2}A_T}}\leq C\evals{\frac{\mathcal{E}((\rho^\mathbf{a})^4)_T}{\mathcal{E}((\rho^\mathbf{a})^4)_t}}+C\evals{e^{\eta A_T}}\leq C +C \evals{e^{\eta A_T}}.
$$
For the first term we obtain a similar estimate, we have that $\left(\frac{L_T^{\mathbf{a}+\epsilon}}{L_t^{\mathbf{a}+\epsilon}}\right)^2$ can be rewritten explicitly as

\begin{align*}
&\prod_{t<T_n\leq T}(\rho_{t_n}(\xi_n)+\epsilon)^2e^{2\int_t^T\int_E (1-\rho_s(e)-\epsilon)\phi_s(de)dA_s}\\
&\leq \prod_{t<T_n\leq T}(\rho_{t_n}(\xi_n)+\bar{\epsilon})^2e^{2\int_t^T\int_E (1-\rho_s(e)-\epsilon)\phi_s(de)dA_s}\\
&\leq\prod_{t<T_n\leq T}(\rho_{t_n}(\xi_n)+\bar{\epsilon})^2e^{\frac{1}{2}\int_t^T\int_E(1-(\rho_s(e)+\bar{\epsilon})^4)\phi_s(de)dA_s}\\
&\cdot e^{-\frac{1}{2}\int_t^T\int_E(1-(\rho_s(e)+\bar{\epsilon})^4)\phi_s(de)dA_s}e^{2A_T-2A_t}\\
&=\sqrt{\frac{\mathcal{E}((\rho^\mathbf{a}+\bar{\epsilon})^4)_T}{\mathcal{E}((\rho^\mathbf{a}+\bar{\epsilon})^4)_t}}e^{-\frac{1}{2}\int_t^T\int_E(1-(\rho_s(e)+\bar{\epsilon})^4)\phi_s(de)dA_s}e^{2A_T-2A_t}\\
&\leq \sqrt{\frac{\mathcal{E}((\rho^\mathbf{a}+\bar{\epsilon})^4)_T}{\mathcal{E}((\rho^\mathbf{a}+\bar{\epsilon})^4)_t}}e^{(L+\bar{\epsilon})^4(A_T-A_t)-\frac{1}{2}(A_T-A_t)+2(A_T-A_t)}\leq \sqrt{\frac{\mathcal{E}((\rho^\mathbf{a}+\bar{\epsilon})^4)_T}{\mathcal{E}((\rho^\mathbf{a}+\bar{\epsilon})^4)_t}} e^{\frac{\eta}{2}A_T}.
\end{align*}
The last term is integrable (as above), and thus we have found an integrable function that is greater than
$$
\left(\frac{L_T^{\mathbf{a}+\epsilon}}{L_t^{\mathbf{a}+\epsilon}}-\frac{L_T^{\mathbf{a}}}{L_t^{\mathbf{a}}}\right)^2.
$$
Then we can apply the dominated convergence theorem and obtain that
$$
\econdm{\xi^{a_T}+\int_t^Tf_s^{a_s}dA_s-D_T^a}{\mathbf{a}+\epsilon}\rightarrow \econdm{\xi^{a_T}+\int_t^Tf_s^{a_s}dA_s-D_T^a}{\mathbf{a}},
$$
which together with \eqref{swit_representation_epsilon_part} tells us that
\begin{equation*}
Y_t^i\geq J(t,i,\mathbf{a})\quad \prob\text{-a.s.}
\end{equation*}
Using the same computation we can show that
\begin{equation*}
Y_t^i= J(t,i,\mathbf{a^*})\quad \prob\text{-a.s.}
\end{equation*}
and thus $\prob$-a.s.
$$
Y_t^i=v(t,i)
$$
\end{proof}

The representation of $Y^i$ as an essential supremum, together with the fact that it is càdlàg, also tells us that it is unique. Thanks to this we can prove uniqueness of the solution to the system, in a quite straightforward way:
\begin{prop}
	\label{swit_uniqueness}
	The solution to the system \eqref{eq:system} is unique.
\end{prop}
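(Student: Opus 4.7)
My plan is to exploit the verification theorem \ref{swit:representation_property} we just proved: the representation $Y_t^i = v(t,i)$ $\prob$-a.s.\ says that the first component of any solution must coincide with a quantity (the value function) that is defined purely in terms of the data of the switching problem, and hence is independent of which solution we picked. So if $(Y^i,U^i,Z^i,K^i)_i$ and $(\tilde Y^i,\tilde U^i,\tilde Z^i,\tilde K^i)_i$ are two solutions, then for each $i$ and each $t$, $Y_t^i=\tilde Y_t^i$ $\prob$-a.s., and since both processes are càdlàg this equality lifts to indistinguishability.

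Next I would recover the other components from $Y^i = \tilde Y^i$. Subtracting the two BSDE give
\[
0 = \int_t^T\!\!\int_E(U^i_s-\tilde U^i_s)(e)(\rho^i_s(e)-1)\phi_s(de)dA_s -\int_t^T\!\!\int_E (U^i-\tilde U^i)(e)q(dsde)-\int_t^T(Z^i_s-\tilde Z^i_s)dW_s+(K^i-\tilde K^i)_T-(K^i-\tilde K^i)_t.
\]
Since $K^i$ and $\tilde K^i$ are continuous (proposition \ref{swit:Kconti}) and the Wiener integral is continuous, the only jumps on the right come from the point process part. Therefore $\Delta Y^i_{T_n}-\Delta\tilde Y^i_{T_n}= -(U^i_{T_n}-\tilde U^i_{T_n})(\xi_n)=0$ on the (totally inaccessible) jump times of $p$, which forces $U^i=\tilde U^i$ on the support of $p$. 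As both $U^i$ and $\tilde U^i$ are predictable, this equality propagates to $\phi_s(de)dA_s$-almost everywhere, i.e.\ equality in $L^{2,\beta}(p)$.

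With $U^i=\tilde U^i$ in hand, the equation reduces to
\[
0 = -\int_t^T(Z^i_s-\tilde Z^i_s)dW_s+(K^i-\tilde K^i)_T-(K^i-\tilde K^i)_t.
\]
The left side is both a continuous local martingale (the Wiener integral) and a predictable continuous process of finite variation ($K^i-\tilde K^i$); the orthogonality of these two classes forces both to be zero. Equivalently, taking the predictable bracket of the right-hand side against $\int_0^\cdot (Z^i-\tilde Z^i)dW_s$ gives $\int_0^T|Z^i_s-\tilde Z^i_s|^2ds=0$, hence $Z^i=\tilde Z^i$ in $L^{2,\beta}(W)$, and then $K^i=\tilde K^i$ follows immediately from the BSDE.

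The main obstacle is the jump identification step for $U^i$: one needs the fact, inherited from the quasi-left-continuity assumption \ref{ass:continuous_A} on $A$, that the jumps of $p$ are totally inaccessible while those of $K^i$ (predictable, continuous in our case) cannot mask them; this allows one to read off $U^i-\tilde U^i$ from the jumps of the identically vanishing process. Once this identification is made, the rest is routine orthogonal-decomposition bookkeeping, exactly as indicated in the parenthetical remark right after \eqref{eq:system_leftlimits}.
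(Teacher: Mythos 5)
Your proposal is correct and follows essentially the same route as the paper: identify the $Y^i$ via the representation as the value function, read off $U^i$ from the totally inaccessible jumps (the "propagation" to $\phi_s(de)dA_s$-a.e.\ equality being exactly the compensator identity applied to the predictable field $|U^i-\tilde U^i|$, as the paper spells out), and then separate $Z^i$ from $K^i$ by the orthogonality/predictable-bracket argument. No gaps.
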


\begin{proof}
	Consider two sets of solutions $(Y^i,U^i,Z^i,K^i)_{i\in\mathcal{J}}$ and $(\bar{Y}^i,\bar{U}^i,\bar{Z}^i,\bar{K}^i)_{i\in\mathcal{J}}$. From theorem \ref{swit:representation_property} we know that for all $i$ $Y^i_t$ and $\bar{Y}^i_t$ both coincide with the value function $v(t,i)$. Since both are càdlàg this means that $Y^i=\bar{Y}^i$ up to indistinguishability. This in turn lets us write, by considering the equations for $Y^i$ and $\bar{Y}^i$ that
	\begin{multline}
	\label{swit_equal_sides_uniqueness}
	\int_t^T\int_E U_s^i(e)(\rho_s^i(e)-1)\phi_s(de)dA_s-\int_t^T\int_E U_s^i(e)q(dsde)-\int_t^TZ_s^idW_s+K_T^i-K_t^i\\
	=\int_t^T\int_E \bar{U}_s^i(e)(\rho_s^i(e)-1)\phi_s(de)dA_s-\int_t^T\int_E \bar{U}_s^i(e)q(dsde)-\int_t^T\bar{Z}_s^idW_s+\bar{K}_T^i-\bar{K}_t^i.
	\end{multline}
	Since both sides must have the same jumps, this means that for all jump $T_n$ with mark $\xi_n$
	$$
	U_{T_n}^i(\xi_n)=\bar{U}^i_{T_n}(\xi_n).
	$$
	This means that $U=\bar{U}$ in $L^{1,0}(p)$, indeed
	\begin{align*}
	\evals{\int_0^T\int_E|U_s(e)-\bar{U}^i_s(e)|\phi_s(de)dA_s}&=\evals{\int_0^T\int_E|U_s(e)-\bar{U}^i_s(e)|p(dsde)}\\
	&=\evals{\sum_{0<T_n\leq T}|U_{T_n}^i(\xi_n)-\bar{U}_{T_n}^i(\xi_n)|}=0.
	\end{align*}
	Now \eqref{swit_equal_sides_uniqueness} becomes
	$$
	\int_t^T(Z_s^i-\bar{Z}_s^i)dW_s=K_T^i-K_t^i-\bar{K}_T^i+\bar{K}_t^i.
	$$
	Consider it between $0$ and $T$. By taking the predictable bracket against $\int_0^T(Z_s^i-\bar{Z}_s^i)dW_s$ on both sides we obtain that, since the $K^i$ and $\bar{K}^i$ are finite variation processes,
	$$
	\int_0^T (Z_s^i-\bar{Z}_s^i)^2dW_s=0,
	$$
	which tells us that also the $Z$ component is unique.
	This leaves us with only $K$:
	$$
	K_T^i-K_t^i-\bar{K}_T^i+\bar{K}_t^i=0.
	$$
	It is easy to see that by setting $t=0$ we obtain $K_T^i=\bar{K}_T^i$ and from that for any $t\in[0,T]$ $K_t^i=\bar{K}_t^i$.
\end{proof}

\appendix
\section{A comparison theorem for BSDE driven by MPP and Brownian motion}
\label{appendix_comparison}
In this section we establish a comparison theorem for BSDE driven by marked point process and Brownian motion. Consider the following BSDE
\begin{equation}
\label{appendix_BSDE_equation_with_W}
Y_t=\xi+\int_t^Tf_s(Y_s,U_s)dA_s+\int_t^Tg_s(Y_s,Z_s)ds-\int_t^T\int_E U_s(e)q(dsde)-\int_t^TZ_sdW_s.
\end{equation}

The solution is a triple $(Y,U,Z)\in L^{2,\beta}(A)\cap L^{2,\beta}(W)\times L^{2,\beta}(p)\times L^{2,\beta}(W)$. It is possible to prove, in a quite standard way, that a solution exists under the following hypotheses
\begin{Asmpt}{(R)}\ \\
	\label{ass:appendix_BSDE}
	\begin{enumerate}
		\item The final condition $\xi:\Omega\rightarrow \mathbb{R}$ is $\mathcal{G}_T$ measurable and $\evals{e^{\beta A_T}|\xi|^2}<\infty$.
		\item For every $\omega\in\Omega$, $t\in[0,T]$, $r\in\mathbb{R}$, the mapping $f_t(\omega,r,\cdot):\mathcal{L}^2(E,\mathcal{E},\phi_t(\omega,de))\rightarrow\mathbb{R}$ satisfies the assumptions:
		\begin{enumerate}[label=(\roman*)]
			\item for every $U\in\mathcal{L}^{2,\beta}(p)$ the mapping
			$$
			(\omega,t,r)\mapsto f_t(\omega,r,U_t(\omega,\cdot))
			$$
			is $Prog^{\mathbb{G}}\otimes\mathcal{B}(\mathbb{R})$-measurable;
			\item there exists $L_f\geq0$ and $L_U\geq 0$ such that for every $\omega\in\Omega$, $t\in[0,T]$, $r,r'\in\mathbb{R}$, $z,z'\in \mathcal{L}^2(E,\mathcal{E},\phi_t(\omega,de))$ we have
			\begin{multline*}
			|f_t(\omega,r,z(\cdot))-f_t(\omega,r',z'(\cdot))|\\
			\leq L_f|r-r'|+L_U\left(\int_E|z(e)-z'(e)|^2\phi_t(\omega,de)\right)^2;
			\end{multline*}
			\item we have
			$$
			\evals{\int_0^Te^{\beta A_t}|f_t(0,0)|^2dA_t}<\infty.
			$$
		\end{enumerate}
	\item 	\begin{enumerate}[label=(\roman*)]
		\item $g $ is $Prog\times\mathcal{B}(\mathbb{R})\times\mathcal{B}(\mathbb{R}^d)$-measurable.
		\item There exist $L_g\geq 0$, $L_Z\geq 0$ such that for every $\omega\in\Omega$,
		$t\in\left[0,T\right]$, $y,y'\in\mathbb{R}$, $z,z'\in\mathbb{R}^d$
		$$
		|g(\omega,t,y,z)-g(\omega,t,y',z')|\leq L_g|y-y'|+L_Z|z-z'|
		$$
		\item we have
		$$
		\eval\left[\int_0^Te^{\beta A_s}|g(s,0,0)|^2ds\right]<\infty.
		$$
	\end{enumerate}
	\end{enumerate}
\end{Asmpt}
In that case we have the result
\begin{teo}
	\label{appendix_simple_BSDE}
	Suppose that assumption \ref{ass:appendix_BSDE} hold for some $\beta>2L_f+L_U^2$. Then there exists a unique solution to equation \eqref{appendix_BSDE_equation_with_W} $(L^{2,\beta}(A)\cap L^{2,\beta}(W))\times L^{2,\beta}(p)\times L^{2,\beta}(W)$.
\end{teo}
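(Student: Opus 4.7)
The approach is a Banach fixed point on a weighted $L^2$ product space. Let $\mathbb{H}_\beta=(L^{2,\beta}(A)\cap L^{2,\beta}(W))\times L^{2,\beta}(p)\times L^{2,\beta}(W)$ with its natural product norm. I would build a contraction $\Phi:\mathbb{H}_\beta\to\mathbb{H}_\beta$ whose unique fixed point solves \eqref{appendix_BSDE_equation_with_W}. Given $(Y,U,Z)\in\mathbb{H}_\beta$, the Lipschitz and integrability hypotheses on $f,g,\xi$ ensure that $\xi+\int_0^T f_s(Y_s,U_s)\,dA_s+\int_0^T g_s(Y_s,Z_s)\,ds$ is square integrable. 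Setting $M_t=\mathbb{E}[\xi+\int_0^T f_s(Y_s,U_s)\,dA_s+\int_0^T g_s(Y_s,Z_s)\,ds\mid\mathcal{F}_t]$ and using that $\mathbb{F}$ is generated by the independent $W$ and $p$, the martingale representation theorem in this filtration yields unique $\tilde U\in L^{2,0}(p)$, $\tilde Z\in L^{2,0}(W)$ with
$$M_t=M_0+\int_0^t\int_E\tilde U_s(e)\,q(dsde)+\int_0^t\tilde Z_s\,dW_s.$$
Defining $\tilde Y_t=M_t-\int_0^t f_s(Y_s,U_s)\,dA_s-\int_0^t g_s(Y_s,Z_s)\,ds$, the triple $\Phi(Y,U,Z)=(\tilde Y,\tilde U,\tilde Z)$ satisfies the BSDE with generators frozen at $(Y,U,Z)$ and terminal value $\xi$.

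The heart of the proof is the contraction estimate. Take two inputs $(Y^k,U^k,Z^k)$, $k=1,2$, and denote differences by $\Delta Y,\Delta U,\Delta Z$ and $\Delta\tilde Y,\Delta\tilde U,\Delta\tilde Z$. Itô's formula applied to $e^{\beta A_s}|\Delta\tilde Y_s|^2$ on $[t,T]$ produces, thanks to the continuity of $A$ (Assumption \ref{ass:continuous_A}), the drift term $\beta\int_t^T e^{\beta A_s}|\Delta\tilde Y_s|^2\,dA_s$, the MPP quadratic variation $\int_t^T\int_E e^{\beta A_s}|\Delta\tilde U_s(e)|^2\phi_s(de)\,dA_s$ (using the quasi-left continuity of $N$ to rewrite the jump part against the compensator), and the Brownian piece $\int_t^T e^{\beta A_s}|\Delta\tilde Z_s|^2\,ds$. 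Taking expectation kills the martingale parts and yields
\begin{align*}
&\mathbb{E}\bigl[e^{\beta A_t}|\Delta\tilde Y_t|^2\bigr]+\beta\,\|\Delta\tilde Y\|^2_{L^{2,\beta}(A)}+\|\Delta\tilde U\|^2_{L^{2,\beta}(p)}+\|\Delta\tilde Z\|^2_{L^{2,\beta}(W)}\\
&=2\mathbb{E}\!\int_t^T\! e^{\beta A_s}\Delta\tilde Y_s\bigl(f_s(Y^1,U^1)-f_s(Y^2,U^2)\bigr)dA_s\\
&\quad+2\mathbb{E}\!\int_t^T\! e^{\beta A_s}\Delta\tilde Y_s\bigl(g_s(Y^1,Z^1)-g_s(Y^2,Z^2)\bigr)ds.
\end{align*}
Bounding the right hand side via the Lipschitz hypotheses and Young's inequality $2ab\le\varepsilon a^2+\varepsilon^{-1}b^2$ with parameters $\alpha$ on the $f$-$Y$ term and $\gamma$ on the $f$-$U$ term, the $f$-integral is dominated by $(\alpha L_f+\gamma L_U^2)\|\Delta\tilde Y\|^2_{L^{2,\beta}(A)}+\tfrac{L_f}{\alpha}\|\Delta Y\|^2_{L^{2,\beta}(A)}+\tfrac{1}{\gamma}\|\Delta U\|^2_{L^{2,\beta}(p)}$. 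Absorbing $\|\Delta\tilde Y\|^2$ into $\beta\|\Delta\tilde Y\|^2$ on the left requires $\beta>\alpha L_f+\gamma L_U^2$, and absorbing $\tfrac{1}{\gamma}\|\Delta U\|^2$ into $\|\Delta\tilde U\|^2$ requires $\gamma>1$. Minimising $\alpha L_f+L_f/\alpha$ at $\alpha=1$ (giving $2L_f$) and sending $\gamma\downarrow 1$ yields precisely the threshold $\beta>2L_f+L_U^2$.

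The $g$-integral is handled by the same Young device with constants $L_g,L_Z$, but the cross term $|\Delta\tilde Y|^2\,ds$ does not match the $dA$-weighted left hand side; the standard remedy is to iterate the fixed point on short intervals $[T-h,T],[T-2h,T-h],\dots$, the interval length $h$ being chosen independently of the input data so that the $L_g,L_Z$ contributions close regardless of their size. Stitching the pieces together furnishes a solution on $[0,T]$; Banach's fixed point theorem then delivers existence and uniqueness in $\mathbb{H}_\beta$. The main obstacle is the calibration of the Young parameters so that the $\|\Delta\tilde Y\|^2_{L^{2,\beta}(A)}$, $\|\Delta\tilde U\|^2_{L^{2,\beta}(p)}$ coefficients on the left dominate the corresponding input terms on the right, yielding the sharp threshold $2L_f+L_U^2$; uniqueness is then an immediate byproduct, obtained by applying the same estimate to the difference of two solutions with identical data and concluding $\Delta Y=\Delta U=\Delta Z=0$.
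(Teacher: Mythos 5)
Your argument is correct and is essentially the proof the paper has in mind: the theorem is stated without proof here, with a pointer to \cite{confortola2013backward}, and that reference establishes well-posedness by exactly this scheme (martingale representation in the filtration generated by $p$ and $W$, an It\^o estimate on $e^{\beta A_s}|\Delta Y_s|^2$ using the continuity of $A$ to pass from the jump sum to the compensated integral, and a Banach fixed point whose calibration of Young parameters produces the threshold $\beta>2L_f+L_U^2$). The one place where you genuinely deviate is the treatment of the Lebesgue-time generator $g$: you close the estimate by iterating the fixed point over short intervals of length $h$ depending only on $L_g,L_Z$, whereas the ``straightforward extension'' implicit in the paper (and visible in its own computations, e.g.\ Lemma \ref{otherBSDE_lemma_penalized_bound}, where It\^o is applied to $e^{\beta(A_t+t)}|Y_t|^2$) is to use the combined weight $e^{\beta A_s+\mu s}$ with $\mu$ large; the extra $\mu\int_t^Te^{\beta A_s+\mu s}|\Delta\tilde Y_s|^2\,ds$ on the left then absorbs the $ds$-weighted terms coming from $L_g,L_Z$ in one pass over all of $[0,T]$, and since $e^{\mu s}$ is bounded the resulting norm is equivalent to $\|\cdot\|_{L^{2,\beta}}$. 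Both devices are standard and yield the same conclusion; the global-weight version is slightly cleaner because it avoids restarting the iteration and checking that the terminal data at each stitching point remain in the right weighted space, but your interval-splitting is perfectly sound since $h$ is data-independent.
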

We do not give the proof here as it is quite standard and a straightforward extension of the one in \cite{confortola2013backward}.

We want to prove a comparison theorem for this equation. It is well known that BSDE with jumps component require additional assumptions for this result to hold (see \cite{barles1997backward} for a counterexample), and BSDE driven marked point processes are no exception. There are a number of works that provide a comparison principle for BSDE with jumps, and we cite among others \cite{cohen10generalcomparison,kruse2016monotonecomparison,quenez2013jumpsoptimizationdynamicrisk,royer2006relatednonlinear}. We give here a result for the BSDE \eqref{appendix_BSDE_equation_with_W}. Compared to the case of point processes that have a compensator absolutely continuous with respect to the Lebesgue measure, we need to add some integrability condition on the process $A$ since we will be using Girsanov changes of measure.

\begin{teo}
	\label{appendix_BSDE_teo_general_comparison_simple_BSDE}
	Let $(\xi^i,f^i,g^i)_{i=1,2}$ be two sets of data for which the hypotheses above hold. Let $(Y^i,U^i,Z^i)_{i=1,2}$ be the corresponding solutions. Assume that $\xi^2\leq\xi^1$ a.s, $f_t^2(Y_t^1,U_t^1)\leq f_t^1(Y_t^1,U_t^1)$ and $g_t^2(Y_t^1,Z_t^1)\leq g_t^1(Y_t^1,Z_t^1)$ a.s. for all $t$. Assume there is a $\mathcal{P}\otimes\mathcal{E}$-measurable function $\gamma$ such that $-1\leq\gamma \leq C$ and for all $t,y\in\mathbb{R}^+\times\mathbb{R}$ and $u^2,u^1\in \mathcal{L}^2(E,\mathcal{E},\phi_t(\omega,de))$
	\begin{equation}
	\label{appendix_BSDE_gamma_condition}
	f_t^2(y,u^2)-f_t^2(y,u^1)\leq\int_E\gamma_t(e)(u^2(e)-u^1(e))\phi_t(de).
	\end{equation}
	Assume also that for some $\eta>3+(C+1)^4$ we have $\evals{e^{\eta A_T}}<\infty$.
	Then we have that $Y_t^2\leq Y_t^1$ a.s. for all $t$.
\end{teo}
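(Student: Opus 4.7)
The plan is to linearize the equation for the difference $\bar Y = Y^1 - Y^2$ and reduce it to a linear BSDE to which a Girsanov-type change of measure can be applied. Setting $\bar U = U^1 - U^2$, $\bar Z = Z^1 - Z^2$, the dynamics of $\bar Y$ involve the increment $f_s^1(Y^1,U^1) - f_s^2(Y^2,U^2)$, which I would decompose as
\[
[f_s^1(Y^1,U^1) - f_s^2(Y^1,U^1)] + [f_s^2(Y^1,U^1) - f_s^2(Y^2,U^1)] + [f_s^2(Y^2,U^1) - f_s^2(Y^2,U^2)].
\]
The first bracket is non-negative by hypothesis, the second equals $a_s \bar Y_s$ for a predictable process $a$ with $|a| \le L_f$ (via the usual incremental-ratio trick with the Lipschitz bound), and the third is bounded above by $\int_E \gamma_s(e) \bar U_s(e) \phi_s(de)$ thanks to \eqref{appendix_BSDE_gamma_condition}. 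An identical decomposition for $g$ produces $\tilde a_s \bar Y_s + \tilde b_s \bar Z_s$ with $|\tilde a|\le L_g$, $|\tilde b|\le L_Z$, plus a non-negative term from $g^1_s(Y^1,Z^1) - g^2_s(Y^1,Z^1)$. Collecting the non-negative discarded pieces into an increasing process $dK_s\ge 0$, I arrive at the linear-type identity
\[
\bar Y_t = \bar\xi + \int_t^T \!\!\bigl(a_s\bar Y_s + \textstyle\int_E \gamma_s(e)\bar U_s(e)\phi_s(de)\bigr)dA_s + \int_t^T (\tilde a_s\bar Y_s + \tilde b_s \bar Z_s)ds + K_T - K_t - \int_t^T\!\!\int_E \bar U_s q(dsde) - \int_t^T \bar Z_s dW_s.
\]

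Next I would perform a Girsanov change of measure. Since $\gamma+1\in[0,C+1]$ satisfies the analogue of Assumption~\ref{ass:rhosbound} with bound $M=C+1$, and $\eta>3+(C+1)^4$ by hypothesis, Proposition~\ref{prop:still_mg_and_square} applies to the Doléans-Dade exponential $\mathcal E(\gamma+1)$, combined with the bounded Brownian kernel $\tilde b$ via a standard Novikov argument. Under the resulting measure $\mathbb Q$ the compensated measure is $q'(dsde) = p(dsde) - (\gamma_s(e)+1)\phi_s(de)dA_s$ and $W'_t = W_t - \int_0^t \tilde b_s ds$ is a Brownian motion, so that the $\gamma$- and $\tilde b$-terms on the right absorb into $\bar U\,q'$ and $\bar Z\,dW'$. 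The resulting equation becomes a linear BSDE with driver $a_s\bar Y_s dA_s + \tilde a_s \bar Y_s ds + dK_s$.

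To remove the remaining linear dependence on $\bar Y$, I would multiply by the (bounded) exponential weight $\Gamma_t = \exp(\int_0^t a_s dA_s + \int_0^t \tilde a_s ds)$ and apply Itô's formula. The product $\Gamma_t \bar Y_t$ then satisfies
\[
\Gamma_t \bar Y_t = \Gamma_T\bar\xi + \int_t^T \Gamma_s\,dK_s - \int_t^T\!\!\int_E \Gamma_s \bar U_s q'(dsde) - \int_t^T \Gamma_s \bar Z_s dW'_s.
\]
Taking $\mathbb Q$-conditional expectation (the stochastic integrals being $\mathbb Q$-martingales by the same integrability estimates used in Proposition~\ref{prop:still_mg_and_square}) gives $\Gamma_t \bar Y_t = \mathbb E^{\mathbb Q}[\Gamma_T \bar\xi + \int_t^T \Gamma_s dK_s\mid \mathcal F_t] \ge 0$ $\mathbb Q$-a.s., hence $\bar Y_t\ge 0$ $\mathbb Q$-a.s.

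The delicate point, which I expect to be the main obstacle, is that $\gamma+1$ may vanish, so $\mathbb Q$ is only absolutely continuous with respect to $\mathbb P$ rather than equivalent, and the $\mathbb Q$-a.s. conclusion does not immediately transfer. I would handle this exactly as in the proof of Proposition~\ref{swit_existence_picard_iteration} (and Step~2 of Theorem~\ref{swit:representation_property}): replace $\gamma+1$ by $\gamma+1+\epsilon$ for $0<\epsilon<\bar\epsilon$ with $3+(C+1+\bar\epsilon)^4<\eta$, which induces a probability $\mathbb Q^\epsilon\sim\mathbb P$. The linearization picks up an extra term $\epsilon\int_t^T\int_E \bar U_s(e)\phi_s(de)dA_s$ on the right; running the Girsanov and Itô argument under $\mathbb Q^\epsilon$ gives $\Gamma_t\bar Y_t + \epsilon\,\mathbb E^{\mathbb Q^\epsilon}[\int_t^T\!\int_E \Gamma_s \bar U_s(e)\phi_s(de)dA_s \mid \mathcal F_t] \ge 0$ $\mathbb P$-a.s. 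The correction term vanishes as $\epsilon\to 0$ by the $L^{2,\beta}(p)$-bound on $\bar U$ together with the estimate \eqref{eq:square_martingale_conditioned}, exactly as in the passage following \eqref{swit_first_minus_zero_changed}. This yields $\bar Y_t\ge 0$ $\mathbb P$-a.s., completing the proof.
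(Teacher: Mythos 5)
Your proposal is correct and follows essentially the same route as the paper's proof: linearization of the driver via incremental ratios, discounting by the exponential weight $\Gamma$, a Girsanov change of measure with point-process kernel $\gamma+1+\epsilon$ together with the bounded Brownian drift, and the limit $\epsilon\to 0$ to transfer the conclusion from the equivalent measure back to $\mathbb{P}$; the only differences are cosmetic (you work with $Y^1-Y^2$ rather than $Y^2-Y^1$, package the discarded non-negative terms into an explicit increasing process $K$, and perform the measure change before rather than after multiplying by $\Gamma$). One small slip to fix: with your sign convention the third bracket must be bounded \emph{below} by $\int_E\gamma_s(e)\bar U_s(e)\phi_s(de)$ — which follows from \eqref{appendix_BSDE_gamma_condition} applied with the roles of $u^1,u^2$ exchanged — not above, since otherwise the leftover absorbed into $K$ would not be non-negative.
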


\begin{proof}
	To simplify notation suppose that the Brownian motion is in $d=1$, the general case is done as in the case with only a diffusion part. Define $\bar{Y}=Y^2-Y^1$, $\bar{U}=U^2-U^1$, $\bar{Z}=Z^2-Z^1$, $\bar{\xi}=\xi^2-\xi^1$, $ \bar{f}=f^2(Y^1,U^1)-f^1(Y^1,U^1) $ and $ \bar{g}=g^2(Y^1,Z^1)-g^1(Y^1,Z^1) $.
	$\bar{Y}$ satisfies
	\begin{multline}
	\bar{Y}_t=\bar{\xi}+\int_t^T(f_s^2(Y_s^2,U_s^2)-f_s^1(Y_s^1,U_s^1))dA_s+\int_t^T(g_s^2(Y_s^2,Z_s^2)-g_s^1(Y_s^1,Z_s^1))dA_s\\-\int_t^T\int_E \bar{U}_s(e)q(dsde)-\int_t^T\bar{Z}_sdW_s
	\end{multline}
	Define the quantities
	\begin{gather*}
	\alpha_s=\frac{f_s^2(Y_s^2,U_s^2)-f_s^2(Y_s^1,U_s^2)}{\bar{Y}_s}\indb{\bar{Y}_s\neq 0}\leq L_f\\
	\beta_s=\frac{g_s^2(Y_s^2,Z_s^2)-g_s^2(Y_s^1,Z_s^2)}{\bar{Y}_s}\indb{\bar{Y}_s\neq 0}\leq L_g\\
	\theta_s=\frac{g_s^2(Y_s^1,Z_s^2)-g_s^2(Y_s^1,Z_s^1)}{\bar{Z}_s}\indb{\bar{Z}_s\neq 0}\leq L_Z.
	\end{gather*}
	The equation can be thus rewritten as
	\begin{multline*}
	\bar{Y}_t=\bar{\xi}+\int_t^T\alpha_s\bar{Y}_sdA_s+\int_t^T(f_s^2(Y_s^1,U_s^2)-f_s^2(Y_s^1,U_s^1))dA_s+\int_t^T\bar{f}_sdA_s\\+\int_t^T\beta_s\bar{Y}_sds+\int_t^T\theta_s\bar{Z}_sds+\int_t^T\bar{g}_sds -\int_t^T\int_E \bar{U}_s(e)q(dsde)-\int_t^T\bar{Z}_sdW_s.
	\end{multline*}
	Consider now the positive process $\Gamma$ solution to
	\begin{equation}
	\begin{cases}
	d\Gamma_t=\alpha_tdA_t+\beta_tdt\\
	\Gamma_0=1.
	\end{cases}
	\end{equation}
	We have that
	
	$$
	\Gamma_t=e^{\int_0^t\alpha_sdA_s}e^{\int_0^t\beta_sds}=B_tC_t,
	$$
		where $B_t=\exp\left\lbrace\int_0^t\alpha_sdA_s\right\rbrace$ and $C_t=\exp\left\lbrace\int_0^t\beta_sds\right\rbrace<e^{L_gT}$.
	We notice that
	\begin{equation}
	\label{mathintro_B_is_smaller}
	B_t^2\leq e^{2L_f A_t}<e^{\beta A_t}.
	\end{equation}
	We consider now the dynamic of the product $\bar{Y}\Gamma$. We obtain
	\begin{multline*}
	\Gamma_T\bar{\xi}=\Gamma_t\bar{Y}_t+\int_t^T\alpha_s\bar{Y}_s\Gamma_sdA_s+\int_t^T\beta_s\bar{Y}_s\Gamma_sds-\int_t^T\alpha_s\bar{Y}_s\Gamma_sdA_s\\
	-\int_t^T\Gamma_s(f_s^2(Y_s^1,U_s^2)-f_s^2(Y_s^1,U_s^1))dA_s-\int_t^T\Gamma_s\bar{f}_sdA_s\\
	-\int_t^T\beta_s\bar{Y}_s\Gamma_sds-\int_t^T\theta_s\bar{Z}_s\Gamma_s ds-\int_t^T\Gamma_s\bar{g}_sds\\
	+\int_t^T\int_E \Gamma_s\bar{U}_s(e)q(dsde)+\int_t^T\Gamma_s\bar{Z}_sdW_s.
	\end{multline*}
	Remembering that $\bar{\xi}$, $\bar{f}$ and $\bar{g}$ are non-positive while $\Gamma$ is non-negative we obtain

	\begin{multline*}
	\bar{Y}_t\Gamma_t\leq \int_t^T\Gamma_s(f_s^2(Y_s^1,U_s^2)-f_s^2(Y_s^1,U_s^1))dA_s+\int_t^T\theta_s\bar{Z}_s\Gamma_s ds\\
	-\int_t^T\int_E \Gamma_s\bar{U}_s(e)q(dsde)-\int_t^T\Gamma_s\bar{Z}_sdW_s.
	\end{multline*}

	Using the condition \eqref{appendix_BSDE_gamma_condition} the last inequality becomes
	
	\begin{multline}
	\label{mathintro_inequality_before_reordering}
	\bar{Y}_t\Gamma_t\leq \int_t^T\int_E\gamma_s(e)\Gamma_s\bar{U}_s(e)\phi_s(e)dA_s+\int_t^T\theta_s\bar{Z}_s\Gamma_s ds\\
	-\int_t^T\int_E \Gamma_s\bar{U}_s(e)q(dsde)
	-\int_t^T\Gamma_s\bar{Z}_sdW_s.
	\end{multline}
	We have that $\Gamma\bar{U}\in L^{2,0}(p)$ and $\Gamma\bar{Z}\in L^{2,0}(W)$ and the terms in $q(dsde)$ and $dW$ are martingales, indeed
	\begin{gather*}
	\evals{\int_0^T\int_E|\Gamma_s\bar{U}_s(e)|^2\phi_s(de)dA_s}<\evals{\int_0^T\int_Ee^{\beta A_s}|\bar{U}_s|^2(e)\phi_s(de)dA_s}<\infty\\
	\evals{\int_0^T|\Gamma_s\bar{Z}_s|^2ds}<\evals{\int_0^Te^{\beta A_s}|\bar{Z}_s|^2ds}<\infty\\
	\end{gather*}
	Now reorder the terms in \eqref{mathintro_inequality_before_reordering} to obtain
	\begin{multline*}
	\bar{Y}_t\Gamma_t\leq -\int_t^T\Gamma_s\bar{Z}_s(dW_s-\theta_sds)\\
	-\int_t^T\int_E \Gamma_s\bar{U}_s(e)(p(dsde)-(\gamma_s(e)+1)\phi_s(de)dA_s.
	\end{multline*}
	Consider $\bar{\epsilon}$ such that $\eta>3+(C+1+\bar{\epsilon})^4$. Then for $\epsilon<\bar{\epsilon}$ we add to both sides of the previous inequality the term 
	$$
	\epsilon\int_t^T\Gamma_s\bar{U}_s(e)\phi_s(de)dA_s,
	$$
	obtaining
	\begin{multline}
	\label{mathintro_befor_epsilon_exp}
	\bar{Y}_t\Gamma_t+\epsilon\int_t^T\Gamma_s\bar{U}_s(e)\phi_s(de)dA_s\leq  -\int_t^T\Gamma_s\bar{Z}_s(dW_s-\theta_sds)\\
	-\int_t^T\int_E \Gamma_s\bar{U}_s(e)(p(dsde)-(\gamma_s(e)+1+\epsilon)\phi_s(de)dA_s.
	\end{multline}
	Now we can consider $\gamma_s(e)+1+\epsilon$ (which satisfies, together with the condition on $A$ in the statement of the theorem, assumption \ref{ass:rhosbound}) and $\theta_s$ (which is bounded) as Girsanov kernels. We can introduce the probability $\prob^{\gamma,\theta,\epsilon}\sim\prob$ through the exponential martingale here denoted $L^{\gamma,\theta,\epsilon}$. It is equivalent since the part relative to the point process of Girsanov kernel is strictly positive.
	All $\prob$-martingales are $\prob^{\gamma,\theta,\epsilon}$ martingales and thus by taking $\prob^{\gamma,\theta,\epsilon}$ expectation conditional on $\mathcal{F}_t$ in \eqref{mathintro_befor_epsilon_exp} they vanish. Thus for any $\epsilon<\bar{\epsilon}$ we have that
	$$
	\bar{Y}_t\Gamma_t+\epsilon\econdm{\int_t^T\Gamma_s\bar{U}_s(e)\phi_s(de)dA_s}{\gamma,\theta,\epsilon}\leq 0 \quad \prob^{\gamma,\theta,\epsilon}\text{-a.s.}
	$$
	and thus
	$$
	\bar{Y}_t\Gamma_t+\epsilon\econdm{\int_t^T\Gamma_s\bar{U}_s(e)\phi_s(de)dA_s}{\gamma,\theta,\epsilon}\leq 0 \quad \prob\text{-a.s.}
	$$
	since they are equivalent. To conclude the theorem we just have to show that
	\begin{equation}
	\label{mathintro_convergence_of_added_term}
	\epsilon\econdm{\int_t^T\Gamma_s\bar{U}_s(e)\phi_s(de)dA_s}{\gamma,\theta,\epsilon}\rightarrow 0.
	\end{equation}
	This can be done as in the proof of proposition \ref{swit_existence_picard_iteration}.

	Then \eqref{mathintro_convergence_of_added_term} holds and the theorem is proven.
\end{proof}

\section{A monotonic limit result}
We establish a monotonic limit proposition, as the one introduced in \cite{peng1999monotonic}, for BSDEs of a very particular type. The generator is linear in $U$, and it is the kind of equation we need in this paper.
Consider the following sequence of BSDE
\begin{multline}
\label{mathintro_monotonic_limit_BSDEs}
Y_t^n=\xi+\int_t^Tf_sdA_s+\int_t^T\int_EU_s^n(e)(\rho_s(e)-1)\phi_s(de)dA_s+\int_t^Tg_sds\\-\int_t^T\int_EU_s^nq(dsde)-\int_t^TZ_s^ndW_s+K_T^n-K_t^n
\end{multline}
$K^n$ are known non-decreasing predictable processes starting from zero that are square integrable, that is $\evals{ (K^n_T)^2 } <\infty$. In the following, $\rho$ is a $\mathcal{P}\otimes\mathcal{E}$-measurable random field such that $0\leq \rho_s(e)\leq M$ for some constant $M$, and the hypothesis on $\xi$, $f$ and $g$ are the ones already listed in appendix \ref{appendix_comparison} for  $\beta>(\max\lbrace|M-1|,1\rbrace)^2$. 

\begin{prop}
	\label{prop:monotonic_point}
	Assume that for $\beta>(\max\lbrace|L-1|,1\rbrace)^2$,  $Y_t^n\nearrow Y_t$ for some $Y_t\in L^{2,\beta}(A)\cap L^{2,\beta}(W)$ and
	$$
	\|U^{n}\|_{L^{2,\beta}(p)}+\|Z^{n}\|_{L^{2,\beta}(W)}\leq C.
	$$
	Assume $K^n$ are non decreasing càdlàg predictable with $K_0^n=0$ and $\evals{(K^n_T)^2}<\infty$. Then there exists $U\in L^{2,\beta}(p)$, $Z\in L^{2,\beta}(W)$ and $K$ predictable càdlàg increasing process with $\eval[K_T^2]<\infty$, such that
	\begin{multline*}
	Y_t=\xi+\int_t^Tf_sdA_s+\int_t^T\int_EU_s(e)(\rho_s(e)-1)\phi_s(de)dA_s+\int_t^Tg_sds\\-\int_t^T\int_EU_s(e)q(dsde)-\int_t^TZ_sdW_s+K_T-K_t.
	\end{multline*}
\end{prop}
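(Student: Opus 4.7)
The plan is to extract weak limits from the bounded sequences $\{U^n\}$ and $\{Z^n\}$ in the reflexive Hilbert spaces $L^{2,\beta}(p)$ and $L^{2,\beta}(W)$, and then to recover $K$ by algebraic rearrangement of \eqref{mathintro_monotonic_limit_BSDEs}. By a diagonal extraction I pass to a subsequence (not relabelled) along which $U^n \rightharpoonup U$ in $L^{2,\beta}(p)$ and $Z^n \rightharpoonup Z$ in $L^{2,\beta}(W)$. For every fixed $t\in[0,T]$, the linear maps $V\mapsto \int_0^t V_s dW_s$ and $V\mapsto \int_0^t\int_E V_s(e)q(dsde)$ are bounded into $L^2(\Omega)$, and the analogous statement holds for the drift $V\mapsto \int_0^t\int_E V_s(e)(\rho_s(e)-1)\phi_s(de)dA_s$ thanks to assumption \ref{ass:rhosbound} and the bound on $\eval[e^{\eta A_T}]$; these continuous linear maps transform weak convergence in the source into weak convergence in the target.

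Subtracting \eqref{mathintro_monotonic_limit_BSDEs} between the endpoints $0$ and $t$ expresses $K^n_t$ as an affine functional of the above integrals and of $Y^n_0-Y^n_t$. Combined with the monotone convergence $Y^n_t \to Y_t$ in $L^2(\Omega)$ (from $|Y^n_t|\le\max(|Y^0_t|,|Y_t|)$ and dominated convergence), this yields $K^n_t\rightharpoonup K_t$ weakly in $L^2(\Omega)$ at every $t$, where
\begin{equation*}
K_t := (Y_0 - Y_t) - \int_0^t f_s dA_s - \int_0^t g_s ds - \int_0^t\int_E U_s(e)(\rho_s(e)-1)\phi_s(de)dA_s + \int_0^t\int_E U_s(e)q(dsde) + \int_0^t Z_s dW_s.
\end{equation*}
By construction $(Y,U,Z,K)$ satisfies the limit BSDE. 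Monotonicity of $K$ follows by testing against indicators: for $s\le t$ and $F\in\mathcal{F}$,
\begin{equation*}
\eval[(K_t-K_s)\ind_{F}]=\lim_n \eval[(K^n_t-K^n_s)\ind_{F}]\ge 0,
\end{equation*}
so $K_t\ge K_s$ almost surely, and along a countable dense set of $(s,t)$ this transfers to the paths of the càdlàg modification constructed below.

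For regularity, every term other than $Y$ in the formula for $K$ admits a càdlàg version (continuous for the $dA$ and $ds$ integrals; a càdlàg $L^2$-martingale for the $q$ and $W$ integrals), so $K$ admits a càdlàg modification and $Y=Y_0-K_t-(\text{càdlàg terms})$ admits a càdlàg modification as well. Predictability of $K$ then follows from the uniqueness of the Doob--Meyer-type decomposition of the resulting special semimartingale $Y$: the martingale part is forced to be the sum of the $W$ and $q$ stochastic integrals, the absolutely continuous drift is fixed by $f,g$ and the $(\rho-1)\phi dA$ term, and the remaining finite-variation predictable piece must be $-K$. Square integrability $\eval[K_T^2]<\infty$ is obtained by lower semi-continuity of the $L^2$-norm under weak limits, provided one has a uniform bound on $\eval[(K^n_T)^2]$; this one extracts from the BSDE identity at $t=0$, together with the uniform norm bounds on $U^n,Z^n$, the monotone domination of $Y^n$, the Burkholder--Davis--Gundy inequality, and assumption \ref{ass:rhosbound}.

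The main obstacle is reconciling the weak (in $L^2(\Omega)$, for each fixed $t$) convergence of the $K^n_t$ with the pathwise regularity and predictability required of the limit process $K$: weak convergence alone does not preserve càdlàg paths or predictability, so the proof must define $K$ indirectly through the integral formula above and then read off its predictability from the canonical decomposition of $Y$. The other delicate point is securing the uniform $L^2$ bound on $K^n_T$ used in the lower semi-continuity step, which forces one to control the $q$- and $W$-martingale parts as well as the compensator-type drift uniformly in $n$ using the integrability $\eval[e^{\eta A_T}]<\infty$.
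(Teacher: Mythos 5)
Your overall strategy coincides with the paper's: extract weak limits $U^n\rightharpoonup U$, $Z^n\rightharpoonup Z$ from the norm bounds, push them through the continuous linear maps given by the stochastic and compensator integrals, define $K$ by rearranging the equation, and let $K$ inherit $K_0=0$, monotonicity and the $L^2$ bound from the weak convergence $K^n_\tau\rightharpoonup K_\tau$. Up to that point the argument is sound (and your remark that predictability of $K$ can be read off the canonical decomposition of the special semimartingale $Y$ is a reasonable supplement the paper leaves implicit).

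There is, however, a genuine gap at the regularity step, and it is exactly the point where the paper brings in its one nontrivial external ingredient. You argue that ``every term other than $Y$ in the formula for $K$ admits a càdlàg version, so $K$ admits a càdlàg modification and $Y=Y_0-K_t-(\text{càdlàg terms})$ admits one as well.'' This is circular: $Y_t$ itself appears in the defining formula for $K_t$, and the only a priori information on $Y$ is that it is a pointwise increasing limit of càdlàg processes, which in general is \emph{not} càdlàg (it is only right lower semicontinuous). Monotonicity of $K$ along a countable dense set gives you a right-continuous increasing regularization, but showing that this regularization is actually a modification of $K$ --- equivalently, that $Y$ is right-continuous --- requires an argument. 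The paper closes this gap by invoking Lemma 2.2 of Peng's monotonic limit theorem (\cite{peng1999monotonic}), which asserts precisely that an increasing limit of càdlàg processes of this supermartingale-plus-finite-variation type, with the uniform $L^2$ control available here, is itself càdlàg. Without that lemma (or an equivalent substitute) your proof establishes the identity defining $K_t$ only for each fixed $t$ outside a $t$-dependent null set, and does not deliver a càdlàg increasing $K$ and a càdlàg $Y$ satisfying the equation up to indistinguishability, which is what the proposition is used for later (e.g.\ in the jump analysis of Proposition \ref{swit:Kconti}).
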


\begin{proof}
	First notice that $Y$ is also the $L^{2,\beta}([0,T])$ limit of the $Y^n$. Thanks to the bound on $U^n$ and $Z^n$, we can extract a weakly convergent subsequence $(U^{n_k},Z^{n_k})$ to some $(U,Z)$. This implies that for every fixed stopping time $\tau$,
	\begin{gather*}
	\int_0^\tau\int_EU_s^{n}(e)q(dsde)\overset{w}{\rightharpoonup}\int_0^\tau\int_EU_s(e)q(dsde)\\
	\int_0^\tau Z_s^{n}dW_s\overset{w}{\rightharpoonup}\int_0^\tau Z_s dW_s
	\end{gather*}
	since the application that associates $\int_0^\tau\int_E V_s(e)q(dsde) \in L^2(\mathcal{F}_\tau)$ to $V\in L^{2,\beta}(p)$ is continuous, and the same for the application that associates $\int_0^\tau X_sdW_s \in  L^2(\mathcal{F}_\tau)$ to $X\in L^{2,\beta}(W)$ . Notice also that the linear application that to $V\in L^{2,\beta}(p)$ associates $\int_0^\tau\int_E V_s(e)(\rho_s(e)-1)\phi_s(de)dA_s\in L^2(\mathcal{F}_\tau)$ is continuous.
	
	If we define $K_\tau$ as
	\begin{multline*}
	K_\tau= Y_0-Y_\tau-\int_0^\tau f_sdA_s-\int_{0}^{\tau}g_sds-\int_0^\tau\int_E U_s(e)(\rho_s(e)-1)\phi_s(de)dA_s\\+\int_0^\tau\int_EU_s(e)q(dsde)+\int_0^\tau Z_sdW_s,
	\end{multline*}
	since $K^n_\tau$ satisfies
	\begin{multline*}
	K^n_\tau= Y^n_0-Y^n_\tau-\int_0^\tau f_sdA_s-\int_0^\tau g_s ds-\int_0^t\int_E U^n_s(e)(\rho_s(e)-1)\phi_s(de)dA_s\\+\int_0^\tau\int_EU^n_s(e)q(dsde)+\int_0^\tau Z_s^ndW_s
	\end{multline*}
	it follows that $K^{n_k}_\tau\overset{w}{\rightharpoonup}K_\tau$, for all $\tau$.
	We can deduce that $K_\tau$ inherits the following properties from $K_\tau^n$:
	\begin{itemize}
		\item $K_0=0$ and $\eval[K_T^2]<\infty$.
		\item $K$ is increasing.
	\end{itemize}
	Thanks to Lemma 2.2 in \cite{peng1999monotonic}, we also know that both $Y$ and $K$ are càdlàg.
\end{proof}

\section{Approximation and comparison of the specific reflected BSDE}

In this appendix we establish a comparison result for reflected BSDE of the form \eqref{app_other_BSDE_equation_to_approx}. We first show we can approximate the reflected BSDE with a sequence of penalized standard BSDE, on which we then apply the comparison theorem \ref{appendix_BSDE_teo_general_comparison_simple_BSDE}. It is well known that general comparison theorems do not hold in this framework with jumps, both for normal BSDE and reflected ones. Nevertheless, with additional hypotheses as in theorem \ref{appendix_BSDE_teo_general_comparison_simple_BSDE}, it is possible to compare them. Some examples of papers in which a comparison result for reflected BSDE with jumps is established is \cite{crepey08refelctedcomparison,essaky2008rbsdepenalization,ren10,quenez14robustoptimal}, along many others.

The equation we study is again
\begin{equation}
\label{app_other_BSDE_equation_to_approx}
\begin{cases}
Y_t=\xi+\int_t^Tf_sdA_s+\int_t^TU_s(e)(\rho_s(e)-1)\phi_s(de)dA_s+\int_t^Tg_sds\\
\qquad-\int_t^T U_s(e)q(dsde)-\int_t^TZ_sdW_s+K_T-K_t\\
Y_t\geq h_t\\
\int_0^T(Y_{t^-} -h_{t^-})dK_t=0.
\end{cases}
\end{equation} 
We assume that $\rho$ and $A$ satisfy assumption \ref{ass:rhosbound} for appropriate $M>0$ and $\eta>3+M^4$. For $\beta > (M')^2$ (where $M'=\max(M,|M-1|)$, we assume that data $\xi$ is a $\mathcal{F}_T$-measurable random variable such that $\evals{e^{\beta A_T}|\xi^2}<\infty$, $f,\;g$ to be progressive processes in $L^{2,\beta}(A)$ and $L^{2,\beta}(W)$ respectively and $h$ to be a càdlàg adapted process such that $\evals{\sup_{t\in[0,T]}e^{(\beta+\delta) A_t|h_t|^2}}<\infty$ for some $\delta>0$. Then this equation has one unique solution in $ \left(L^{2,\beta}(A)\cap L^{2,\beta}(W)\right)\times L^{2,\beta}(p)\times L^{2\beta}(W)\times \mathcal{I}^2$ for $\beta>(M')^2$ thanks to \cite[Theorem 4.1]{foresta2017optimal}.

We start by defining for all $n\geq 0$ the following penalized BSDE

\begin{multline}
\label{otherBSDE_penalized}
Y^n_t=\xi+\int_t^Tf_sdA_s+\int_t^T\int_EU_s^n(e)(\rho_s(e)-1)\phi_s(de)dA_s+\int_t^Tg_sds\\
-\int_t^TU^n_s(e)q(dsde)-\int_t^TZ_s^ndW_s+K_T^n-K_t^n,
\end{multline}
where $K_t^n=n\int_0^t\left(Y_s^n-h_s\right)^-ds$.
We see that for each $n$ this is a standard BSDE with generators $\bar{f}^n_s(u)=f_s+\int_E(e)u(e)(\rho_s(e)-1)\phi_s(de)$ and $\bar{g}^n_s(y)=g_s+n(y-h_s)^-$, thus existence and uniqueness in $\left(L^{2,\beta}(A)\cap L^{2,\beta}(W)\right)\times L^{2,\beta}(p)\times L^{2\beta}(W)$ for $\beta>(L')^2$ is assured by theorem \ref{appendix_simple_BSDE}, where $L'=\max\lbrace|L-1|,1\rbrace$. The comparison theorem \ref{appendix_BSDE_teo_general_comparison_simple_BSDE} tells us that $Y_t^n\leq Y_t^{n+1}$ since all the hypotheses are verified (in particular $\bar{g}^n(y)\leq\bar{g}^{n+1}(y)$ for all $y\in\mathbb{R}$. We want to prove we can approximate \eqref{app_other_BSDE_equation_to_approx} with \eqref{otherBSDE_penalized}. Let us start by some lemmas

\begin{lemma}
	\label{otherBSDE_lemma_hatYsmaller}
	Let $Y^n$ be the solution to \eqref{otherBSDE_penalized}. Then there exists a $\hat{Y}$ such that $Y^n_t\nearrow \hat{Y}_t\leq Y_t$, where $Y$ is the solution to the RBSDE \eqref{app_other_BSDE_equation_to_approx}.
\end{lemma}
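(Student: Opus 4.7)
The monotonicity $Y^{n}_{t}\le Y^{n+1}_{t}$ is already in hand (comparison theorem \ref{appendix_BSDE_teo_general_comparison_simple_BSDE} applied to the penalized BSDEs, since the generator in $y$ increases with $n$). Hence the pointwise monotone limit $\hat Y_{t}:=\lim_{n}Y^{n}_{t}$ exists, and the only substantive claim to prove is the uniform upper bound $Y^{n}_{t}\le Y_{t}$ for every $n\ge 0$.

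The plan is to compare $Y^{n}$ with $Y$ by rewriting the reflected BSDE in penalized form. Since $Y\ge h$, we have $(Y_{s}-h_{s})^{-}\equiv 0$, so $(Y,U,Z)$ also solves the equation
\begin{multline*}
Y_{t}=\xi+\int_{t}^{T}f_{s}dA_{s}+\int_{t}^{T}\!\!\int_{E}U_{s}(e)(\rho_{s}(e)-1)\phi_{s}(de)dA_{s}+\int_{t}^{T}g_{s}ds+n\int_{t}^{T}(Y_{s}-h_{s})^{-}ds\\
+(K_{T}-K_{t})-\int_{t}^{T}\!\!\int_{E}U_{s}(e)q(dsde)-\int_{t}^{T}Z_{s}dW_{s},
\end{multline*}
which differs from \eqref{otherBSDE_penalized} only by the additional non-decreasing input $K_{T}-K_{t}$. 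Set $\bar Y=Y-Y^{n}$, $\bar U=U-U^{n}$, $\bar Z=Z-Z^{n}$, and linearize the non-smooth term as $(Y_{s}-h_{s})^{-}-(Y^{n}_{s}-h_{s})^{-}=\lambda_{s}\bar Y_{s}$, where $\lambda_{s}\in[-1,0]$ by the $1$-Lipschitz and non-increasing character of $y\mapsto(y-h)^{-}$. Introduce the continuous finite-variation positive process $\Gamma_{t}=\exp\bigl(n\int_{0}^{t}\lambda_{s}ds\bigr)\in(0,1]$, so that $d\Gamma_{t}=n\lambda_{t}\Gamma_{t}dt$, and apply Itô's formula to $\Gamma_{t}\bar Y_{t}$. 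The drift term $n\lambda_{t}\bar Y_{t}$ is absorbed by $\bar Y_{t}d\Gamma_{t}$, leaving
\begin{equation*}
\Gamma_{t}\bar Y_{t}=\int_{t}^{T}\Gamma_{s}\!\int_{E}\bar U_{s}(e)(\rho_{s}(e)-1)\phi_{s}(de)dA_{s}+\int_{t}^{T}\Gamma_{s}dK_{s}-\int_{t}^{T}\!\!\int_{E}\Gamma_{s}\bar U_{s}(e)q(dsde)-\int_{t}^{T}\Gamma_{s}\bar Z_{s}dW_{s}.
\end{equation*}

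To conclude, I mimic the Girsanov perturbation used in Proposition \ref{swit_existence_picard_iteration}: fix $\bar\epsilon>0$ with $3+(M+\bar\epsilon)^{4}<\eta$, and for $\epsilon<\bar\epsilon$ add $\epsilon\int_{t}^{T}\Gamma_{s}\!\int_{E}\bar U_{s}(e)\phi_{s}(de)dA_{s}$ to both sides so that the jump term regroups as an integral against $p(dsde)-(\rho_{s}(e)+\epsilon)\phi_{s}(de)dA_{s}$, which is a martingale under $\prob^{\rho+\epsilon}\sim\prob$. Taking conditional $\eval^{\rho+\epsilon}$ kills the martingale parts and yields
\begin{equation*}
\Gamma_{t}\bar Y_{t}+\epsilon\,\eval^{\rho+\epsilon}\!\left[\int_{t}^{T}\Gamma_{s}\!\int_{E}\bar U_{s}(e)\phi_{s}(de)dA_{s}\,\Big|\,\mathcal F_{t}\right]=\eval^{\rho+\epsilon}\!\left[\int_{t}^{T}\Gamma_{s}dK_{s}\,\Big|\,\mathcal F_{t}\right]\ge 0.
\end{equation*}
The $\epsilon$-term vanishes as $\epsilon\downarrow 0$ by Cauchy--Schwarz together with the conditional $L^{2}$-bound \eqref{eq:square_martingale_conditioned} on $L^{\rho+\epsilon}/L^{\rho+\epsilon}$ and the $L^{2,\beta}(p)$-integrability of $\bar U$, exactly as in the estimate at the end of the proof of Proposition \ref{swit_existence_picard_iteration}. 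Since $\Gamma_{t}>0$ we obtain $\bar Y_{t}\ge 0$, hence $Y^{n}_{t}\le Y_{t}$ for every $n$, and the limit $\hat Y_{t}\le Y_{t}$. The main delicate point is precisely the Girsanov step: because $\rho$ is allowed to vanish, we cannot take expectations directly under $\prob^{\rho}$, and the $\epsilon$-perturbation combined with the exponential weight $\Gamma$ is needed in order to apply a measure change equivalent to $\prob$.
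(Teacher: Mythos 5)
Your argument is correct, but it takes a genuinely different route from the paper. The paper proves $Y^n_t\le Y_t$ by an optimal-stopping comparison: it represents $Y_t+\epsilon\Gamma^U$ as the $\prob^{\rho+\epsilon}$-Snell envelope of the reward $\psi_\tau=\int f\,dA+\int g\,ds+h_\tau\ind_{\tau<T}+\xi\ind_{\tau\ge T}$, and then, via the stopping time $\tau^*_t=\inf\{s\ge t: K^n_s-K^n_t>0\}\wedge T$, shows that $Y^n_t+\epsilon\Gamma^{U^n}$ is the Snell envelope of the same reward with $h_\tau$ replaced by $Y^n_\tau\wedge h_\tau\le h_\tau$; monotonicity of Snell envelopes in the obstacle then gives the bound. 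You instead observe that $(Y,U,Z,K)$ is a supersolution of the $n$-th penalized equation (since $(Y-h)^-\equiv 0$), linearize the penalty difference with a coefficient $\lambda\in[-1,0]$, discount by $\Gamma_t=\exp(n\int_0^t\lambda_s ds)\in(0,1]$, and run the same $\epsilon$-Girsanov perturbation used in Proposition \ref{swit_existence_picard_iteration} to kill the martingale parts under $\prob^{\rho+\epsilon}\sim\prob$, leaving only the non-negative term $\eval^{\rho+\epsilon}[\int_t^T\Gamma_s dK_s\mid\mathcal F_t]$. Your route is more elementary and self-contained: it bypasses the Snell envelope representation \eqref{otherBSDE_snell_rho}, which the paper only asserts, and it is the standard ``reflected solution dominates the penalized solutions'' argument adapted to the degenerate kernel via the $\epsilon$-shift. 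The paper's route, in exchange, produces the Snell-envelope identities that are reused immediately afterwards in Proposition \ref{otherBSDE_approximation_final} to prove the reverse inequality $Y\le\hat Y$, so the extra machinery is not wasted there. Two small points you should make explicit if you write this up in full: the integrability needed to assert that $\int\Gamma\bar U\,q^{\rho+\epsilon}$ and $\int\Gamma\bar Z\,dW$ are true $\prob^{\rho+\epsilon}$-martingales (it follows from $0<\Gamma\le 1$, $\bar U\in L^{2,\beta}(p)\subset L^{1,0}(p)\cap L^{2,0}(p)$, $\bar Z\in L^{2,\beta}(W)$ and Proposition \ref{prop:still_mg_and_square}), and the integrability of $\int_0^T\Gamma_s dK_s$ under $\prob^{\rho+\epsilon}$ (from $K_T\in L^2(\prob)$ and $L^{\rho+\epsilon}_T\in L^2(\prob)$ via Cauchy--Schwarz).
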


\begin{proof}
	Consider $\bar{\epsilon}>0$ such that $\eta>3+(L'+\epsilon)^4$, where $\eta$ is the parameter appearing in assumption \ref{ass:rhosbound}. Then $\rho+\epsilon$ induces an equivalent probability $\prob^{\rho+\epsilon}\sim\prob$. Then it is possible to show that for any $0<\epsilon<\bar{\epsilon}$ $Y$ satisfies
	
	\begin{equation}
	\label{otherBSDE_snell_rho}
	Y_t+\epsilon \Gamma^U=\essup_{\tau\geq t}\econdm{\int_t^{\tau\wedge T}f_sdA_s+\int_t^{\tau\wedge T}g_sds+h_\tau\indb{\tau<T}+\xi\indb{\tau\geq T}}{\rho+\epsilon},
	\end{equation}
	where
	$$
	\Gamma^U=\essup_{\tau\geq t}\econdm{\int_t^\tau\int_E U_s(e)\phi_s(de)dA_s}{\rho+\epsilon}.
	$$
	
	As already said, we have that $Y^n\leq Y^{n+1}$ thanks to the comparison theorem. Now for fixed $n$ consider $Y^n$ between t and a generic stopping time $\tau$, we have
	\begin{align}
	Y_t^n+
	&\epsilon\int_t^\tau\int_E U_s^n(e)\phi_s(de)dA_s\\
	&\geq\econdm{Y_\tau^n\wedge h_\tau\indb{\tau<T}+\xi\indb{\tau\geq T}+\int_t^\tau f_sdA_s+\int_t^{\tau}g_sds}{\rho+\epsilon}.
	\label{otherBSDE_snell_penalized}
	\end{align}
	Define now $\tau^*_t=\inf\lbrace s \geq t : K_s^n-K_t^n>0\rbrace\wedge T$.
	Take $\omega$ such that $\tau^*_t(\omega)<T$. Then $\exists t_k\searrow\tau^*_t(\omega)$ such that $Y_{t_k}^n(\omega)\leq h_{t_k}(\omega)$. Then $Y_{\tau^*_t}^n(\omega)\leq h_{\tau^*_t}(\omega)$, since $Y^n$ and $h$ are càdlàg. Then
	$$
	Y_{\tau^*_t}^n\indb{\tau_t^*<T}=Y_{\tau^*_t}^n\wedge h_{\tau^*_t}\indb{\tau_t^*<T}
	$$
	and
	\begin{multline}
	Y_t^n+\epsilon\int_t^{\tau^*_t}\int_EU_s^n(e)\phi_s(de)dA_s=Y_{\tau^*_t}^n\wedge h_{\tau^*_t}\indb{\tau_t^*<T}+\xi\indb{\tau^*_t\geq T}+\int_t^{\tau^*_t}g_sds\\
	+\int_t^{\tau^*_t}f_sdA_s+\int_t^{\tau^*_t}\int_EU_s^n(e)(\rho_s(e)+\epsilon-1)\phi_s(de)dA_s\\-\int_t^{\tau^*_t}\int_EU_s^n(e)q(dsde)-\int_t^{\tau^*_t}Z_s^ndW_s.
	\end{multline}
	By taking $\rho-$expectation conditional on $\mathcal{F}_t$ we obtain, together with \eqref{otherBSDE_snell_penalized}
	\begin{multline*}
	Y_t^n+\epsilon\Gamma^{U^n}\\=\essup_{\tau\geq T}\econdm{Y_\tau^n\wedge h_\tau\indb{\tau<T}+\xi\indb{\tau\geq T}+\int_t^{\tau\wedge T} f_sdA_s+\int_t^{\tau\wedge T} g_sds}{\rho+\epsilon}.
	\end{multline*}
	Where
	$$
	\Gamma^{U^n}=\essup_{\tau\geq t}\econdm{\int_t^\tau\int_E U_s(e)\phi_s(de)dA_s}{\rho+\epsilon}
	$$
	Comparing this last to \eqref{otherBSDE_snell_rho} we notice that $\prob^{\rho+\epsilon}$-a.s $Y^n_t+\epsilon\Gamma^{U^n}\leq Y_t+\epsilon\Gamma^U$. Since $\Gamma^{U^n}$ is non-negative, this also means $Y^n_t\leq Y_t+\epsilon\Gamma^U$. Since the sequence $Y^n$ is non-decreasing we have the existence of a limit $\hat{Y}$ such that
	$$
	Y_t^n\nearrow \hat{Y}_t\leq Y_t+\epsilon\Gamma^U.
	$$
	The last inequality holds $\prob^{\rho+\epsilon}$ and $\prob$ almost surely as they are equivalent.
	Now we just have to show that $\Gamma^U$ is bounded so we can send $\epsilon$ to zero and prove the lemma. This is done as in the proof of proposition \ref{swit_existence_picard_iteration}.
\end{proof}

We establish here a lemma on the norms of the solutions to the penalized equations:

\begin{lemma}
	\label{otherBSDE_lemma_penalized_bound}
	Let $(Y^n,U^n,Z^n)$ be the solution to the penalized BSDE \eqref{otherBSDE_penalized}. Then there exist a constant $C_p$ depending on $\xi,f,g,h$ but independent of $n$ such that
	\begin{multline*}
	\evals{\int_0^Te^{\beta A_s}|Y_s^n|^2(dA_s+ds)}+\evals{\int_0^T\int_E e^{\beta A_s}|U_s^n(e)|^2\phi_s(de)dA_s}\\+\evals{\int_0^Te^{\beta A_s}Z_s^2ds}<C_p.
	\end{multline*}
\end{lemma}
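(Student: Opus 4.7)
The plan is to derive an a priori estimate by applying It\^o's formula to $e^{\beta A_s}|Y_s^n|^2$ on $[0,T]$, taking expectation, and absorbing all $n$-dependent terms onto the left-hand side via Young's inequalities. The main obstacle is a circular dependency introduced by the penalization: the new term $2\evals{\int e^{\beta A_s}Y^n_{s^-}dK^n_s}$ produced by It\^o will ultimately be controlled by $\evals{e^{\beta A_T}(K_T^n)^2}$, and a bound on the latter in turn requires the $L^{2,\beta}$ norms of $U^n$ and $Z^n$. The resolution is to close the loop by choosing Young's parameters sufficiently small, exploiting the strict inequality $\beta>(M')^2$.

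Concretely, applying It\^o to $e^{\beta A_s}|Y_s^n|^2$ and taking expectation yields (the stochastic integrals against $q$ and $W$ are true martingales by the $L^{2,\beta}$ hypotheses)
\begin{multline*}
\evals{|Y_0^n|^2}+\beta\evals{\int_0^T e^{\beta A_s}|Y_s^n|^2dA_s}+\|U^n\|_{L^{2,\beta}(p)}^2+\|Z^n\|_{L^{2,\beta}(W)}^2\\
=\evals{e^{\beta A_T}|\xi|^2}+2\evals{\int_0^T e^{\beta A_s}Y_s^n(f_s dA_s+g_s ds)}\\
+2\evals{\int_0^T e^{\beta A_s}Y_s^n\!\int_E U_s^n(\rho_s-1)\phi_s(de)dA_s}+2\evals{\int_0^T e^{\beta A_s}Y_{s^-}^n dK_s^n}.
\end{multline*}
The $f$ and $g$ cross-terms I would handle by $2|ab|\leq\epsilon a^2+\epsilon^{-1}b^2$, while Cauchy--Schwarz applied to the $U^n$ cross-term gives $2|Y_s^n\int_E U_s^n(\rho_s-1)\phi_s(de)|\leq (M')^2\gamma^{-1}|Y_s^n|^2+\gamma\int_E(U_s^n(e))^2\phi_s(de)$. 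Since $\beta>(M')^2$, I can pick $\gamma<1$ with $(M')^2/\gamma<\beta$ and $\epsilon$ small enough so that the $U^n$-norm and a fraction of the weighted $|Y^n|^2$-integral are absorbed into the LHS.

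The delicate step is the penalization term. Writing $Y_{s^-}^n=(Y_{s^-}^n-h_{s^-})+h_{s^-}$ and using that $K^n$ is absolutely continuous together with the identity $(Y_s^n-h_s)\cdot n(Y_s^n-h_s)^-\leq 0$, I obtain
$$2\evals{\int_0^T\! e^{\beta A_s}Y_{s^-}^n dK_s^n}\leq 2\evals{\sup_s\!(e^{\beta A_s/2}h_s^+)\cdot e^{\beta A_T/2}K_T^n}\leq\lambda^{-1}\evals{\sup_s e^{\beta A_s}(h_s^+)^2}+\lambda\evals{e^{\beta A_T}(K_T^n)^2}.$$
The first term is finite by the assumption on $h$. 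For the second, I would solve the penalized equation at $t=0$ for $K_T^n$ as a sum of $\xi$, $Y_0^n$, the data integrals and the stochastic integrals, square, multiply by $e^{\beta A_T}$, and take expectation; a H\"older estimate combined with $\evals{e^{\eta A_T}}<\infty$ from Assumption \ref{ass:rhosbound} controls the $e^{\beta A_T}$-weight against the $q$- and $W$-martingale parts, producing
$$\evals{e^{\beta A_T}(K_T^n)^2}\leq C\bigl(1+\evals{|Y_0^n|^2}+\|U^n\|_{L^{2,\beta}(p)}^2+\|Z^n\|_{L^{2,\beta}(W)}^2\bigr).$$

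To close the loop, note that Lemma \ref{otherBSDE_lemma_hatYsmaller} gives $Y_s^n\leq Y_s$ (with $Y$ the reflected BSDE solution), and the comparison-based monotonicity already invoked in the text yields $Y_s^n\geq Y_s^0$, so $\evals{|Y_0^n|^2}$ is uniformly bounded. Substituting the bound on $\evals{e^{\beta A_T}(K_T^n)^2}$ back into the It\^o identity and picking $\lambda$ sufficiently small, the $U^n$- and $Z^n$-contributions on the right are absorbed into their counterparts on the LHS, yielding the uniform estimate. The hard part will be exactly this simultaneous absorption of $U^n$ and $Z^n$ through two nested Young's inequalities; it is only possible because the strict inequality $\beta>(M')^2$ leaves positive slack after the first absorption, and because the freedom to pick $\lambda$ small independently of the data controls the second.
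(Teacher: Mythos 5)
Your overall strategy---It\^o's formula applied to a weighted square of $Y^n$, the observation that $dK^n$ only charges the set where $Y^n\leq h$ so that $\int_0^Te^{\beta A_s}Y^n_{s}dK^n_s\leq\int_0^Te^{\beta A_s}h_sdK^n_s$, a Young inequality, and then expressing the total mass of $K^n$ from the equation itself to close the loop---is the same as the paper's. But there is a genuine gap at the step where you control the penalization term. You apply the \emph{terminal} weight to the total mass, bounding the term by $\lambda\evals{e^{\beta A_T}(K_T^n)^2}$, and you then propose to estimate $\evals{e^{\beta A_T}(K_T^n)^2}$ by solving the equation for $K_T^n$, squaring, and multiplying by $e^{\beta A_T}$. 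This forces you to bound quantities such as $\evals{e^{\beta A_T}\bigl(\int_0^T\int_EU^n_s(e)q(dsde)\bigr)^2}$ and $\evals{e^{\beta A_T}\bigl(\int_0^TZ^n_sdW_s\bigr)^2}$: the unbounded random weight $e^{\beta A_T}$ sits \emph{outside} the squared stochastic integrals, so the It\^o isometry does not apply, and the H\"older argument you invoke against $\evals{e^{\eta A_T}}<\infty$ would require moments of order strictly larger than $2$ of these stochastic integrals, i.e.\ quantities like $\evals{\bigl(\int_0^T\int_E|U^n_s(e)|^2p(dsde)\bigr)^q}$ with $q>1$, which are not controlled by the $L^{2,\beta}$ norms at your disposal. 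As written, the intermediate bound $\evals{e^{\beta A_T}(K_T^n)^2}\leq C\bigl(1+\evals{|Y_0^n|^2}+\|U^n\|^2_{L^{2,\beta}(p)}+\|Z^n\|^2_{L^{2,\beta}(W)}\bigr)$ does not go through.

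The paper avoids this by a weight-splitting device, which is precisely why the obstacle is assumed to satisfy $\evals{\sup_te^{(\beta+\delta)A_t}|h_t|^2}<\infty$ with a strictly larger exponent: one writes $e^{\beta A_s}h_s=\bigl(e^{(\beta+\delta)A_s/2}h_s\bigr)e^{(\beta-\delta)A_s/2}$ and bounds $\int_0^Te^{\beta A_s}h_sdK^n_s\leq\gamma\sup_te^{(\beta+\delta)A_t}|h_t|^2+\gamma^{-1}\bigl(\int_0^Te^{(\beta-\delta)A_s/2}dK^n_s\bigr)^2$. The point is that the weighted variation $\int_0^Te^{(\beta-\delta)A_s/2}dK^n_s$ is read off from the dynamics of $e^{(\beta-\delta)A_t/2}Y^n_t$, so the stochastic integrals now carry the \emph{predictable} weight $e^{(\beta-\delta)A_s/2}$ inside them, the isometry applies, and everything is dominated by the $L^{2,\beta}$ norms because $\beta-\delta<\beta$; taking $\gamma$ large then closes the loop. (The paper also works with $e^{\beta(A_t+t)}$ rather than $e^{\beta A_t}$, which is what produces the $ds$ part of the claimed bound on $\evals{\int_0^Te^{\beta A_s}|Y^n_s|^2(dA_s+ds)}$; with your weight It\^o only yields the $dA_s$ part.) Your absorption of the $U^n$ cross-term using $\beta>(M')^2$ and your uniform bound on $\evals{|Y_0^n|^2}$ via Lemma \ref{otherBSDE_lemma_hatYsmaller} are fine---the latter is even unnecessary, since $|Y_0^n|^2$ already appears with a favourable sign on the left of the It\^o identity---but the estimate of the weighted mass of $K^n$ needs to be redone along the lines above.
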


\begin{proof}
	The bound is obtained by applying It\^o's formula to $e^{\beta (A_t+t)}(Y_t^n)^2$, as in the proof of \cite[Proposition 3.1]{foresta2017optimal} In the computation appears the term
	$$
	\int_0^Te^{\beta (A_s+s)}Y_s^ndK^n_s=\int_{0}^{T}e^{\beta (A_s+s)}Y_s^n n (Y_s^n-h_s)^-ds\leq \int_0^Te^{\beta (A_s+s)}h_sdK_s^n,
	$$
	where the last inequality is due to the fact that the integrand is non zero only when $h_s\geq Y_s^n$. This is in turn bounded by
	$$
	+\gamma\evals{\sup_t e^{(\beta+\delta)(A_t+t)}h_{t^-}^2}
	+\frac{1}{\gamma}\evals{\left(\int_0^{T}e^{(\beta-\delta)\frac{A_s+s}{2}}dK^n_s\right)^2}
	$$
	for any $\gamma>0$.

	The last term is estimated again by considering the dynamic of $e^{(\beta-\delta)\frac{(A_t+t)}{2}}Y_t^n$.
\end{proof}

\begin{prop}
	\label{otherBSDE_approximation_final}
	We have that $\hat{Y}_t=Y_t$ and thus $Y_t^n\nearrow Y_t$.
\end{prop}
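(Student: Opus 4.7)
The plan is to exploit the monotonic limit proposition \ref{prop:monotonic_point} to upgrade the inequality $\hat Y \leq Y$ of Lemma \ref{otherBSDE_lemma_hatYsmaller} into an identity. Concretely, we will identify limit processes $(U,Z,K)$ such that $(\hat Y, U, Z, K)$ satisfies the full reflected BSDE \eqref{app_other_BSDE_equation_to_approx}; the uniqueness result for \eqref{app_other_BSDE_equation_to_approx} (quoted from \cite[Theorem 4.1]{foresta2017optimal}) will then give $\hat Y = Y$.

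\textbf{Step 1 (BSDE part via the monotonic limit).} The sequence $Y^n$ is non-decreasing, and $\hat Y \leq Y \in L^{2,\beta}(A)\cap L^{2,\beta}(W)$. By Lemma \ref{otherBSDE_lemma_penalized_bound}, $\|U^n\|_{L^{2,\beta}(p)}+\|Z^n\|_{L^{2,\beta}(W)}$ is bounded uniformly in $n$. The penalty process $K^n_t = n\int_0^t (Y^n_s-h_s)^{-}\,ds$ is predictable, continuous, non-decreasing and starts at zero; solving for $K^n_T$ in \eqref{otherBSDE_penalized} and using the just mentioned uniform bounds gives $\sup_n \evals{(K^n_T)^2}<\infty$. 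Proposition \ref{prop:monotonic_point} then provides $U\in L^{2,\beta}(p)$, $Z\in L^{2,\beta}(W)$ and a predictable c\`adl\`ag non-decreasing process $K$ with $\evals{K_T^2}<\infty$ such that
\[
\hat Y_t=\xi+\int_t^T f_s dA_s+\int_t^T\!\!\int_E U_s(e)(\rho_s(e)-1)\phi_s(de)dA_s+\int_t^T g_s ds-\int_t^T\!\!\int_E U_s(e)q(dsde)-\int_t^T Z_s dW_s+K_T-K_t.
\]

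\textbf{Step 2 (obstacle condition).} The uniform $L^2$ bound on $K^n_T$ yields
$\evals{\int_0^T (Y^n_s-h_s)^{-}ds}=n^{-1}\evals{K^n_T}\to 0$. Since $(Y^n_s-h_s)^{-}$ is non-increasing in $n$, monotone convergence identifies the limit with $\evals{\int_0^T(\hat Y_s-h_s)^{-}ds}$, which must therefore vanish. Using that $\hat Y$ and $h$ are c\`adl\`ag, $\hat Y_t\geq h_t$ for every $t$ a.s.

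\textbf{Step 3 (Skorohod minimality).} This is the delicate step. Since $\hat Y\geq h$ and $K$ is non-decreasing, $\int_0^T(\hat Y_{s^-}-h_{s^-})\,dK_s\geq 0$; we only need the reverse inequality in expectation. For the penalized equation,
\[
\int_0^T (Y^n_s-h_s)\,dK^n_s=\int_0^T n\,(Y^n_s-h_s)(Y^n_s-h_s)^{-}ds=-\int_0^T n\bigl((Y^n_s-h_s)^{-}\bigr)^2 ds\leq 0.
\]
Decompose
\[
\evals{\int_0^T (\hat Y_s-h_s)\,dK^n_s}=\evals{\int_0^T (Y^n_s-h_s)\,dK^n_s}+\evals{\int_0^T(\hat Y_s-Y^n_s)\,dK^n_s}.
\]
The first term on the right is $\leq 0$; the second is controlled, via Cauchy--Schwarz, by $\|\hat Y-Y^n\|_{L^{2,\beta}(A)}\cdot \sup_n \|K^n_T\|_{L^2}$ (after absorbing the exponential weight through the bound $\int_0^T e^{-\beta A_s}dK^n_s$ and using $K^n$ bounded in $L^2$), and this tends to $0$ by the $L^{2,\beta}$-strong convergence $Y^n\to \hat Y$. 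On the left-hand side, the weak convergence $K^n_t\rightharpoonup K_t$ at each $t$ produced by Proposition \ref{prop:monotonic_point}, together with an integration by parts against the c\`adl\`ag process $\hat Y-h$, transfers the limit to $\evals{\int_0^T(\hat Y_{s^-}-h_{s^-})\,dK_s}$. Combining the two sides gives $\evals{\int_0^T(\hat Y_{s^-}-h_{s^-})\,dK_s}\leq 0$, hence equality to zero.

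\textbf{Step 4 (Conclusion).} The quadruple $(\hat Y,U,Z,K)$ satisfies every condition of \eqref{app_other_BSDE_equation_to_approx}. By the uniqueness statement of \cite[Theorem 4.1]{foresta2017optimal}, $\hat Y=Y$ (and $U=U$, $Z=Z$, $K=K$ for the same reason), so $Y^n_t\nearrow Y_t$, as claimed.

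The principal obstacle is Step 3: passing to the limit in the Stieltjes integral $\int_0^T(\hat Y_{s^-}-h_{s^-})\,dK^n_s$ combines a weak-in-$t$ convergence of $K^n$ with the strong convergence of $Y^n-h$, and it is here that the uniform $L^2$-control on $K^n_T$ from Lemma \ref{otherBSDE_lemma_penalized_bound} is essential; the rest of the argument is standard Peng-type monotonic limit bookkeeping.
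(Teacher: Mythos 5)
Your Steps 1 and 2 reproduce the paper's opening moves (the monotonic limit proposition to get $(\hat U,\hat Z,\hat K)$, and the $n^{-1}\mathbb{E}[K^n_T]\to 0$ argument for $\hat Y\ge h$), but Step 3 --- verifying the Skorohod condition for the limit quadruple so that uniqueness can be invoked --- contains a genuine gap, and it is exactly the step the paper's argument is built to avoid. Two sub-steps fail. First, the cross term $\mathbb{E}\left[\int_0^T(\hat Y_s-Y^n_s)\,dK^n_s\right]$ is not controlled by $\|\hat Y-Y^n\|_{L^{2,\beta}(A)}\cdot\sup_n\|K^n_T\|_{L^2}$: the measure $dK^n_s=n(Y^n_s-h_s)^-\,ds$ is absolutely continuous with respect to $ds$ (not $dA_s$) with a density that blows up in $n$, so Cauchy--Schwarz against $dK^n$ yields $\bigl(\int_0^T(\hat Y_s-Y^n_s)^2\,dK^n_s\bigr)^{1/2}(K^n_T)^{1/2}$, whose first factor cannot be traded for an $L^{2,\beta}(A)$ norm. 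What one actually needs is $\mathbb{E}\left[\sup_s|\hat Y_s-Y^n_s|^2\right]\to 0$, i.e.\ $\mathcal{S}^2$-convergence, which is not available here: the convergence $Y^n\nearrow\hat Y$ is only monotone pointwise and in $L^{2,\beta}(A)$, and since $\hat Y$ is merely c\`adl\`ag Dini's theorem does not apply. Second, the limit passage on the left-hand side is unjustified: weak convergence $K^n_\tau\rightharpoonup K_\tau$ in $L^2(\mathcal{F}_\tau)$ for each fixed time does not imply $\int_0^T\phi_s\,dK^n_s\to\int_0^T\phi_{s^-}\,dK_s$ for a general c\`adl\`ag integrand $\phi=\hat Y-h$; the integration by parts you invoke would require $\phi$ to have finite variation, which it does not (it carries a martingale part).

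The paper bypasses the Skorohod condition for the limit entirely. It perturbs the kernel to $\rho+\epsilon$ to obtain an equivalent probability $\mathbb{P}^{\rho+\epsilon}$, shows that $\hat Y_t+\int_0^tf_s\,dA_s+\int_0^tg_s\,ds+\epsilon\hat\Gamma^U_t$ is a $\mathbb{P}^{\rho+\epsilon}$-supermartingale dominating the reward process $\psi$, while the corresponding quantity built from $Y$ is the $\mathbb{P}^{\rho+\epsilon}$-Snell envelope of the same $\psi$ (hence the smallest dominating supermartingale); this gives $Y_t+\epsilon\Gamma^U_t\le\hat Y_t+\epsilon\hat\Gamma^U_t$, and letting $\epsilon\to0$ and combining with Lemma \ref{otherBSDE_lemma_hatYsmaller} yields $\hat Y=Y$ without ever identifying the minimality property of $\hat K$. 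If you wish to keep your uniqueness-based plan, you would first have to establish uniform-in-$t$ convergence of $Y^n$ (or a substitute such as a section-theorem argument), which is precisely the hard point in the c\`adl\`ag-obstacle, marked-point-process setting.
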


\begin{proof}
	First notice that by Lebesgue dominated convergence theorems we have that $Y^n\overset{L^{2,\beta}(A)}{\rightarrow}\hat{Y}$. Thanks to lemma \ref{otherBSDE_lemma_penalized_bound}, we know that $(U^n,Z^n)$ are bounded in $L^{2,\beta}(p)\times L^{2,\beta}(W)$ for $\beta >(L')^2$. Then thanks to proposition \ref{prop:monotonic_point} we know that there exist $(\hat{U},\hat{Z},\hat{K})$ that solve
	\begin{multline}
	\label{otherBSDE_eq_for_hatY}
	\hat{Y}_t=\xi+\int_t^Tf_sdA_s+\int_t^Tg_sds+\int_t^T\int_E \hat{U}_s(e)(\rho_s(e)-1)\phi_s(de)dA_s\\
	-\int_t^T\int_E \hat{U}_s(e)q(dsde)-\int_t^T \hat{Z}_sdW_s+\hat{K}_T-\hat{K}_t.
	\end{multline}
	Now it holds that $\hat{Y}_t\geq h_t$. Indeed if we take \eqref{otherBSDE_penalized} between $0$ and $T$ and we take expectation:
	\begin{multline*}
	n\eval\left[\int_0^T(Y_s^n-h_s)^-ds\right]=\eval[Y_0^n]\\-\eval\left[\xi-\int_0^T\left(f_s\int_EU-s^n(e)(\rho_s(e)-1)\phi_s(de)\right)dA_s-\int_0^tg_sds\right],
	\end{multline*}
	and, dividing by $n$, $\eval\left[\int_0^T(Y_s^n-h_s)^-ds\right]\rightarrow0$, where we used the fact that $\eval\int_0^T\int_E U_s^n(e)(\rho_s(e)-1)\phi_s(de)dA_s\leq M$ from lemma \ref{otherBSDE_lemma_penalized_bound}.
	Then
	\begin{multline*}\eval\left[\int_0^T(\hat{Y}_s-h_s)^-ds\right]=\eval\left[\int_0^T\lim_n(Y_s^n-h_s)^-ds\right]\\\leq\liminf_n\eval\left[\int_0^T(Y_s^n-h_s)^-ds\right]=0.\end{multline*}
	Then $\mathbb{P}$-a.s. $\int_0^T(Y_s^n-h_s)^-ds=0$ and since the process are càdlàg, $\mathbb{P}$-a.s. $\hat{Y}_s\geq h_s$ for all $t<T$. Since $\hat{Y}_T=\xi$, this holds also for $T$.
	This means that
	
	\begin{equation}
	\label{otherBSDE_hatY_larger}
	\hat{Y}_t+\int_0^tf_sdA_s+\int_0^tg_sds\geq \xi\indb{t\geq T}+h_t\indb{t<T}+\int_0^tf_sdA_s+\int_0^tg_sds.
	\end{equation}
	
	Introduce again (see the proof of lemma \ref{otherBSDE_lemma_hatYsmaller}) the equivalent probability $\prob^{\rho+\epsilon}\sim\prob$ through a Girsanov transfomr with kernel $\rho+\epsilon$.
	Define the non-negative quantity
	$$
	\hat{\Gamma}^U_t=\essup_{\tau\geq t}\econdm{\int_t^\tau\int_E \hat{U}_s(e)\phi_s(de)dA_s}{\rho+\epsilon}.
	$$
	If we consider equation \eqref{otherBSDE_eq_for_hatY}  between t and a stopping time $\tau$ and add $$\epsilon\int_t^\tau\int_E \hat{U}_s(e)\phi_s(de)dA_s$$ to both sides, we obtain after taking $\essup$ over all stopping times on both sides that
	
	\begin{equation*}
	\hat{Y}_t+\int_0^tf_sdA_s+\int_0^tg_sds+\epsilon\hat{\Gamma}^U_t=\essup_{\tau\geq t}{\hat{\psi}_\tau}-\hat{K}_t,
	\end{equation*}
	where
	$$
	\hat{\psi_t}=\xi+\int_0^tf_sdA_s+\int_0^tg_sds+\hat{K}_t.
	$$
	Thus
	\begin{equation}
	\label{otherBSDE_second_supermg}
	\hat{Y}_t+\int_0^tf_sdA_s+\int_0^tg_sds+\epsilon\hat{\Gamma}^U_t
	\end{equation}
	is a $\prob^{\rho+\epsilon}$-supermartingale as it is the difference of a supermartingale (it is a Snell envelope) and an increasing process. Notice that since $\prob^{\rho+\epsilon}\sim\prob$, the inequality \eqref{otherBSDE_hatY_larger} also holds $\prob^{\rho+\epsilon}$-a.s. Also if we add $\hat{\Gamma}^U_t$ to the left hand side of \eqref{otherBSDE_hatY_larger}, the inequality is still true. Then \eqref{otherBSDE_second_supermg} is a $\prob^{\rho+\epsilon}$-supermartingale that dominates
	$$
	\psi_t=\xi\indb{t\geq T}+h_t\indb{t<T}+\int_0^tf_sdA_s+\int_0^tg_sds.
	$$
	On the other hand we have that (see the proof of lemma \ref{otherBSDE_lemma_hatYsmaller})
	$$
	Y_t+\epsilon \Gamma^U=Y_t+\epsilon\essup_{\tau\geq t}\econdm{\int_t^\tau\int_E U_s(e)\phi_s(de)dA_s}{\rho+\epsilon}
	$$
	is the $\prob^{\rho+\epsilon}$ Snell envelope of the same quantity $\psi_t$. As the Snell envelope is the smallest supermartingale that dominates $\eta_t$, it holds that $\prob^{\rho+\epsilon}$-a.s.
	$$
	Y_t+\epsilon\Gamma^U\leq \hat{Y}+\epsilon\hat{\Gamma}^U.
	$$
	But since the two probabilities are equivalent, this means that the inequality holds also $\prob$-a.s. Now we have already shown in the proof of lemma \ref{otherBSDE_lemma_hatYsmaller} that
	$$
	\Gamma^U_t\leq \econd{e^{\eta A_t}}+\frac{\econd{\int_0^T\int_Ee^{\beta A_s}U_s^2(e)\phi_s{de}dA_s}}{4\beta}.
	$$
	The same holds for $ \hat{\Gamma}^U_t $. Thus by sending $\epsilon$ to zero we obtain that $Y_t\leq\hat{Y}_t$ $\prob$-a.s. Since the processes are càdlàg this holds up to indistinguishability. Confronting it with lemma \ref{otherBSDE_lemma_hatYsmaller} we obtain that $Y_t=\hat{Y}_t$.
\end{proof}

Thanks to the approximation result, we can provide a comparison theorem for reflected BSDE of the form \eqref{app_other_BSDE_equation_to_approx}.

\begin{teo}\label{otherBSDE_comparison}
	Assume \ref{ass:rhosbound} holds for $\rho$ and $A$. We are given two sets of data $\xi^i,f^i,g^i,h^i$ satisfying assumptions at the beginning of the section for some $\beta>L'$, and such that $\xi^1\leq \xi^2$, $f_s^1\leq f_s^2$, $g_s^1\leq g_s^2$ and $h_s^1\leq h_s^2$. Then we have that the respective solutions $(Y^i,U^i,Z^i,K^i)$ are such $Y_t^1\leq Y_t^2$ for all $t\in[0,T]$, $\mathbb{P}$-almost surely.
\end{teo}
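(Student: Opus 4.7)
The plan is to deduce this comparison result from the penalization approximation developed just above together with the BSDE comparison theorem \ref{appendix_BSDE_teo_general_comparison_simple_BSDE}. For $i=1,2$ I would introduce the penalized sequence
\begin{multline*}
Y^{i,n}_t=\xi^i+\int_t^T f^i_s\,dA_s+\int_t^T\int_E U^{i,n}_s(e)(\rho_s(e)-1)\phi_s(de)\,dA_s+\int_t^T g^i_s\,ds\\
-\int_t^T\int_E U^{i,n}_s(e)\,q(dsde)-\int_t^T Z^{i,n}_s\,dW_s+n\int_t^T (Y^{i,n}_s-h^i_s)^-\,ds,
\end{multline*}
each of which is a standard BSDE with generator $\bar f^{i,n}_s(u)=f^i_s+\int_E u(e)(\rho_s(e)-1)\phi_s(de)$ and $\bar g^{i,n}_s(y)=g^i_s+n(y-h^i_s)^-$. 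By Proposition \ref{otherBSDE_approximation_final}, for each $i$ one has $Y^{i,n}_t\nearrow Y^i_t$, so it suffices to prove $Y^{1,n}_t\leq Y^{2,n}_t$ for every fixed $n$ and pass to the limit.

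To apply theorem \ref{appendix_BSDE_teo_general_comparison_simple_BSDE} to the pair $(\bar f^{1,n},\bar g^{1,n})$ vs.\ $(\bar f^{2,n},\bar g^{2,n})$ at level $n$, I check the three sign hypotheses: $\xi^1\leq\xi^2$ is given; $\bar f^{1,n}_s(U^{1,n}_s)\leq \bar f^{2,n}_s(U^{1,n}_s)$ reduces to $f^1_s\leq f^2_s$ since the $U$--dependent part is identical in both generators; finally
\[
\bar g^{1,n}_s(Y^{1,n}_s,Z^{1,n}_s)=g^1_s+n(Y^{1,n}_s-h^1_s)^-\leq g^2_s+n(Y^{1,n}_s-h^2_s)^-=\bar g^{2,n}_s(Y^{1,n}_s,Z^{1,n}_s),
\]
using $g^1\leq g^2$ together with the key monotonicity $h^1\leq h^2 \Rightarrow (y-h^1)^-\leq (y-h^2)^-$ for every $y$. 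The required $\gamma$ condition on the jump part is trivially verified by taking $\gamma_s(e)=\rho_s(e)-1$, which belongs to $[-1,M-1]$; with $C=M-1$ one has $3+(C+1)^4=3+M^4$, so the integrability hypothesis $\mathbb{E}[e^{\eta A_T}]<\infty$ for some $\eta>3+M^4$ is exactly assumption \ref{ass:rhosbound}. Hence the BSDE comparison theorem gives $Y^{1,n}_t\leq Y^{2,n}_t$ $\mathbb{P}$-a.s. for all $t$.

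Passing to the limit in $n$ using Proposition \ref{otherBSDE_approximation_final} yields $Y^1_t\leq Y^2_t$ $\mathbb{P}$-a.s.\ for all $t$, and the càdlàg property of both processes promotes this to the indistinguishability statement. The only nontrivial point along the way is the compatibility of the penalization with the ordering of the obstacles, but this is precisely the elementary inequality $(y-h^1)^-\leq (y-h^2)^-$ when $h^1\leq h^2$; everything else is a direct verification of the already-available hypotheses. No new estimate is needed, since the integrability assumption \ref{ass:rhosbound} on $A$ was tailor-made to bring $\rho-1$ inside the admissible range of Girsanov kernels used by theorem \ref{appendix_BSDE_teo_general_comparison_simple_BSDE}.
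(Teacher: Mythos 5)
Your proposal is correct and follows essentially the same route as the paper: penalize both reflected equations, use the elementary monotonicity $(y-h^1)^-\leq (y-h^2)^-$ to compare the penalized generators via Theorem \ref{appendix_BSDE_teo_general_comparison_simple_BSDE}, and pass to the monotone limit using Proposition \ref{otherBSDE_approximation_final}. Your explicit verification of the $\gamma$-condition with $\gamma_s(e)=\rho_s(e)-1$ and $C=M-1$ is a welcome detail that the paper leaves implicit.
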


\begin{proof}
	We can approximate each RBSDE with the BSDE defined above. That is for $i=1,2$:
	\begin{multline*}
	Y^{i,n}_t=\xi^i+\int_t^Tf_s^idA_s+\int_t^T g_s^i ds+\int_t^T\int_EU_s^{i,n}(\rho_s(e)-1)\phi_s(de)dA_s\\-\int_t^T U^{i,n}_s(e)q(dsde)-\int_t^T Z_s^{i,n}+K_T^{i,n}-K_t^{i,n}
	\end{multline*}
	with $K_t^{i,n}=n\int_0^t(Y_s^{i,n}-h_s^i)^-ds$. Remember that $Y_t^{i,n}\nearrow Y_t^i$ thanks to proposition \ref{otherBSDE_approximation_final}. Since $h_s^1\leq h_s^2$, we have that $(y-h_s^1)^-\leq (y-h_s^2)^-$ and we can apply the comparison theorem \ref{appendix_BSDE_teo_general_comparison_simple_BSDE} and obtain that
	$$
	Y_t^{1,n}\leq Y_t^{2,n} \quad\forall t\in[0,T],\; \mathbb{P}\text{-a.s. } \forall n\geq 1.
	$$
	By taking the limit for $n\rightarrow+\infty$, we obtain that $Y_t^{1}\leq Y_t^{2} \quad\forall t\in[0,T],\; \mathbb{P}\text{-a.s. }$.
\end{proof}

\printbibliography
\end{document}